\numberwithin{equation}{section}
\newtheorem{theorem}{Theorem}[section]
\newtheorem{definition}[theorem]{Definition}
\newtheorem{proposition}[theorem]{Proposition}
\newtheorem{lemma}[theorem]{Lemma}
\newtheorem{corollary}[theorem]{Corollary}
\theoremstyle{definition}
\newtheorem{remark}[theorem]{Remark}
\newcounter{constantk}
\newcommand{\newconstantk}[1]{\refstepcounter{constantk}\label{#1}}
\newcommand{\useconstantk}[1]{k_{\textnormal{\tiny \ref{#1}}}}
\newcounter{constant}
\newcommand{\newconstant}[1]{\refstepcounter{constant}\label{#1}}
\newcommand{\useconstant}[1]{c_{\textnormal{\tiny \ref{#1}}}}
\newcounter{bigconstant}
\newcommand{\newbigconstant}[1]{\refstepcounter{bigconstant}\label{#1}}
\newcommand{\usebigconstant}[1]{C_{\textnormal{\tiny \ref{#1}}}}
\DeclareMathOperator\PP{P}
\DeclareMathOperator\EE{E}
\DeclareMathOperator\dist{d}
\newcommand\dd{\textnormal{d}}
\newcommand{\1}{{\mathbbm 1}}
\newcommand{\LL}{\mathbb{L}}
\newcommand{\Z}{\mathbb{Z}}
\newcommand{\R}{\mathbb{R}}
\newcommand{\N}{\mathbb{N}}
\newcommand{\cC}{\mathcal{C}}
\newcommand{\cF}{\mathcal{F}}
\newcommand{\cI}{\mathcal{I}}
\newcommand{\cO}{\mathcal{O}}
\newcommand{\cP}{\mathcal{P}}
\begin{document}

\title{Law of large numbers for ballistic random walks in dynamic random environments under lateral decoupling}

\date{\today}
\author{
Weberson~S.~Arcanjo
  \thanks{Email: \ \texttt{weberson.arcanjo@gmail.com}; \ Department of Mathematics, Universidade Federal de Minas Gerais, Av.\ Antonio Carlos 6627, 31270-901 Belo Horizonte, MG - Brazil.}
  \and
  Rangel Baldasso
  \thanks{Email: \ \texttt{r.baldasso@math.leidenuniv.nl}; \ Mathematical Institute, Leiden University, P.O. Box 9512, 2300 RA Leiden, The Netherlands.}
  \and
  Marcelo~R.~Hil\'ario
  \thanks{Email: \ \texttt{mhilario@mat.ufmg.br}; \ Department of Mathematics, Universidade Federal de Minas Gerais, Av.\ Antonio Carlos 6627, 31270-901 Belo Horizonte, MG - Brazil.}
  \and
  Renato~S.~dos~Santos
  \thanks{Email: \ \texttt{rensosan@gmail.com}; \ Department of Mathematics, Universidade Federal de Minas Gerais, Av.\ Antonio Carlos 6627, 31270-901 Belo Horizonte, MG - Brazil.}
}

\maketitle

\begin{abstract}
We establish a strong law of large numbers for one-dimensional continuous-time random walks in dynamic random environments under two main assumptions: 
the environment is required to satisfy a decoupling inequality that can be interpreted as a bound on the speed of dependence propagation, 
while the random walk is assumed to move ballistically with a speed larger than this bound. 
Applications include environments with strong space-time correlations such as the zero-range process and the asymmetric exclusion process. 

\medskip

  \noindent
  \emph{Keywords and phrases.}
  Random walks, dynamical random environment, zero-range process, asymmetric exclusion process.

  \noindent
  MSC 2010: 60K35, 82B43.
\end{abstract}


\section{Introduction}

Random walks in random environments model the motion of a particle subjected to the influence of an inhomogeneous medium. This is done by specifying a transition kernel that depends locally on an underlying collection of random variables indexed by space, called \emph{random environment}.
The environment may be \emph{static} or \emph{dynamic}, according to whether it remains fixed or evolves stochastically in time. 

In this work, we consider one-dimensional dynamic random environments that are invariant under space-time shifts and satisfy a mild decorrelation inequality (cf.\ Assumption~\eqref{e:DEC}) that can be interpreted as a bound on the speed of dependence propagation.
We then introduce a continuous-time random walk $X$ whose jump rates are bounded and depend locally on the environment.
Our main result, Theorem~\ref{t:maingeneral} below, states that, if the random walk moves ballistically with large enough speed (cf.\ Assumption~\eqref{e:BAL}), then it satisfies a strong law of large numbers.
As applications, we derive new strong laws of large numbers for random walks driven by the zero-range process (Corollary \ref{c:ZR}) and the asymmetric exclusion process (Corollary \ref{c:LLN_ASEP}) under the hypothesis that the random walk is subjected to sufficiently large local drifts uniformly over the environment.
We also mention that, although our methods apply in the static case, we do not believe that they bring any novelty in this setting.

In applications of Theorem~\ref{t:maingeneral}, once the random walk is shown to behave ballistically, one only needs to verify Assumption~\eqref{e:DEC} for the environment in order to obtain the law of large numbers. 
This assumption states roughly that events depending on disjoint regions of the environment become nearly independent as long as the spatial distance between these regions is large enough compared to their separation in time.
It is satisfied by a large class of environments including some conservative interacting particle systems which lack uniform mixing bounds.

Our setting is similar to the one in \cite{avena2011transient}, where a law of large numbers and a central limit theorem were proved for the random walk on the simple symmetric exclusion process under the assumption of sufficiently large local drifts. 
There, the relation with the interchange process is explored in order to construct a renewal structure. Their local drift assumption was relaxed in \cite{huveneers2015random}, where the renewal strategy was improved and combined with a renormalization scheme in order to study perturbative cases characterized by very large or very small rates of evolution of the environment. Environments composed of particles performing independent random walks were treated similarly in \cite{den2014random, hilario2015random, BHSST19, BHSST20} in other perturbative regimes.

In \cite{blondel2020random}, a strong law of large numbers was obtained for dynamic random environments with sufficiently fast correlation decay in time.
The work \cite{HKA19} built on the methods from~\cite{blondel2020random} together with the renewal structure from \cite{huveneers2015random} to establish limit theorems for the random walk in the symmetric exclusion process without imposing large drifts or perturbative rates on the evolution of the environment. 
It explores the fact that the environment fulfills decorrelation inequalities which is different from the one that we consider here.
We further explore the renormalization techniques from \cite{blondel2020random, HKA19} to tackle new kinds of environments for which renewal structures are, to the best of our knowledge, not yet known, such as the zero-range process and the asymmetric exclusion process. 
Our adaptation of their methods provides a general result that can be applied to a large class of environments as soon as the necessary ballisticity of the random walk is verified.

For further discussion about our results and their relation with the literature on the topic, see Section \ref{ss:discussion}.

\subsection*{Outline of the paper}
\label{ss:outline}

\noindent
The paper is organized as follows. The mathematical setup and main results are given in Section~\ref{s:setting}, including our main 
assumptions, two applications (to the zero-range and asymmetric exclusion processes) and a discussion of related works.
Section~\ref{s:proofoverview} provides a proof overview for our main theorem via intermediate results that are proved in Sections~\ref{s:Proof_decaiv} and~\ref{s:v-=v+}. The proof of the main theorem is then completed in Section~\ref{s:proof_main}. 
The remaining sections contain the proofs of some technical results used elsewhere in the paper: 
Section~\ref{s:asep} concerns a decoupling inequality for the simple exclusion process, and Appendix~\ref{s:martingale_appendix} deals with deviation estimates for submartigales.

\bigskip

\noindent \textbf{Acknowledgements.}
We thank Augusto Teixeira for proposing the problem and for fruitful discussions held during a visit to the Mathematics Department of UFMG.
We also thank Luca Avena, Tertuliano Franco and Bernardo de Lima for very useful comments and suggestions, and an anonymous referee for suggestions and corrections.
WA was supported by CAPES fellowship 88887.197372/2018-00 during the elaboration of this work.
RB has counted on the support of the Mathematical Institute of Leiden University.
The research of MH was partially supported by CNPq grant 312227/2020-5 (Produtividade em Pesquisa) and FAPEMIG grant APQ-01214-21 (Universal). 
The research of RSdS was partially supported by CNPq grant 313921/2020-2 (Produtividade em Pesquisa) and FAPEMIG grant APQ-02288-21 (Universal).

\section{Mathematical setting and main results}
\label{s:setting}

In this section we define the model and state our main results. 
In Section~\ref{ss:DRE} we describe the class of random environments that we will consider and state the decoupling assumption that they must satisfy. 
The random walk is constructed in Section~\ref{ss:RW}. 
The ballisticity assumption and our results in the general setting are given in Section~\ref{ss:resultsgeneral}, followed by our results in the case of the zero-range and asymmetric exclusion processes in Sections~\ref{ss:resultsZR} and~\ref{ss:resultsASEP}, respectively. 
In Section~\ref{ss:discussion} we give a brief historical overview of the model and contextualize our results within the literature. 
Section~\ref{ss:properties} collects some first basic properties of our setup. 

\medskip
\noindent \textbf{Notation.}  
We write $\mathbb{N}=\{1,2,\ldots\}$, $\mathbb{N}_{0}=\mathbb{N}\cup \{0\}$, and $\R_{+} = [0,\infty)$. 
For $x,y \in \R$, $x \vee y = \max\{x,y\}$, $x \wedge y = \min\{x,y\}$, $\lfloor x \rfloor$ is the largest integer not larger than $x$ and
 $\lceil x \rceil$ the smallest integer not smaller than $x$.
Given $y=(x,t) \in \R^2$, we write
\begin{equation*}
\pi_{1}(y)= x \quad \text{ and } \quad \pi_{2}(y)= t
\end{equation*} 
for its projections onto the space and time coordinates, respectively.
Finally, the set
\begin{equation}
\label{e:defLL}
\LL:=\Z\times \mathbb{R}_{+}
\end{equation}
will be useful as it contains the space-time position of our random walks.

\medskip
\noindent \textbf{Constants.}
We will denote by $C_1,C_2,..., c_1, c_2,...$ and $k_1, k_2,... $ positive constants whose values are fixed at their first appearance in the text. 
Dependence on other parameters may be indicated at the first appearance and omitted in future appearances. 
For example, if $c_i$ depends on $\rho$, we may write $c_i(\rho)$ at the first appearance and only write $c_i$ afterwards.

\subsection{The dynamic random environment}
\label{ss:DRE}

Let $(\Omega, \mathcal{F}, \PP)$ be a probability space and $(E, \mathcal{E})$ a measurable space. 
The role of random environment will be taken by a stochastic process $\eta = (\eta_t)_{t \in \R_+}$ on $(\Omega, \mathcal{F}, \PP)$
where $\eta_t = (\eta_t(x))_{x \in \Z} \in E^\Z$ for each $t \in \R_+$. 
We will assume the following:
\begin{enumerate}
\item[(E1)] the map $\eta:\R_+\times \Omega \to E^\Z$ is measurable;
\item[(E2)] for each $(x,t) \in \LL = \Z \times \R_+$, the space-time translation $\theta_{x,t} \eta$ defined by $(\theta_{x,t} \eta)_s(y) = \eta_{t+s}(x+y)$ has, under $\PP$, the same distribution as $\eta$.
\end{enumerate}

In order to state our decoupling assumption, we first need a few definitions.
For a fixed region $R \subset \R^2$, we say that a random variable on $(\Omega, \mathcal{F}, \PP)$ is \emph{supported on $R$}
if it is measurable with respect to $\sigma\big( \eta_t(x) \colon\, (x,t) \in R \cap \LL \big)$.

Given two regions $B_1,B_2 \subset \mathbb{R}^2$, we define their vertical distance as
\begin{equation*}
\dist_V = \dist_V(B_1, B_2)=\inf\{ |t-s| : \exists\, x,y \in \R \text{ such that } (x,t) \in B_1 \text{ and }(y,s)\in B_2\},
\end{equation*}
and their horizontal distance as
\begin{equation*}
\dist_H = \dist_H(B_1, B_2) =\inf \{|x-y| : \exists\, t,s \in \R \text{ such that } (x,t) \in B_1 \text{ and } (y,s) \in B_2\}.
\end{equation*}

We write $\EE$ for expectation with respect to $\PP$.
Our key assumption on $\eta$ is as follows.
It is inspired by \cite{HKA19}, see in particular 
Proposition~4.1 therein.

\newbigconstant{c:decoupling_2}
\newbigconstant{c:decoupling_3}

\medskip
\noindent {\bf Decoupling Assumption:}
There exist constants $v_\circ, \kappa_\circ, C_\circ, \usebigconstant{c:decoupling_2}, \usebigconstant{c:decoupling_3}>0$ and $\gamma_\circ>1$ such that, for any two regions of the form $B_1=(-\infty,a] \times [b, b+s]$, $B_2=[c,\infty) \times [d, d+s]$ with $a,c \in \R$ and $b,d,s \geq 0$ whose horizontal and vertical distances satisfy
\begin{equation}\label{condition_to_decouple}
\dist_H \geq  v_\circ \dist_V + \usebigconstant{c:decoupling_2} s + \usebigconstant{c:decoupling_3}
\end{equation}
and any pair of non-negative random variables $f_1$, $f_2$ supported respectively on $B_1$, $B_2$ and satisfying $||f_1||_{\infty}$, $||f_2||_{\infty}\leq 1$,
\begin{equation}
\label{e:DEC}
\EE[f_1f_2]\leq \EE[f_1] \EE[f_2]+C_\circ e^{-\kappa_\circ (\log\dist_H)^{\gamma_\circ}}.
\end{equation}

In applications, the height $s$ of relevant boxes will be small compared to their vertical separation $\dist_V$,
and thus the most important parameter in \eqref{condition_to_decouple} is $v_\circ$. This constant can be interpreted as
a bound on the speed of dependence propagation in the environment in the sense that, if $B_2$ does not intersect a cone with inclination slightly larger than $v_\circ$ stemming from $B_1$, then the state of the environment inside these boxes is decorrelated in the sense of \eqref{e:DEC}.

\subsection{The random walk}
\label{ss:RW}

We will consider random walks with nearest-neighbor jumps whose rates depend on the state of $\eta$ at the current position.
To this end, fix two measurable functions $\alpha, \beta:E \to \R_+$.
We will interpret $\alpha(\eta_t(x))$ as the rate for a random walk at site $x$ at time $t$ to jump to the right, 
and $\beta(\eta_t(x))$ as the rate to jump to the left.

We will assume that the total jump rate is bounded, i.e., for some $\Lambda \in [1,\infty)$,
\begin{equation}
\label{e:BR}
\sup_{\xi \in E} \left\{ \alpha(\xi) + \beta(\xi) \right\} \leq \Lambda.
\end{equation}

Recall $\LL = \Z \times \R_+$. To define our random walks, we enlarge the probability space $(\Omega, \mathcal{F}, \PP)$ to support an independent Poisson point process
$\Pi$ on $\LL \times [0,\Lambda]$ with intensity measure $\#(\dd z) \otimes \dd t \otimes \dd u$ where $\#(\dd z)$ denotes counting measure on $\Z$ and $\dd t$, $\dd u$ denote Lebesgue measure on $\R$, $[0,\Lambda]$, respectively. 
We will abuse notation and write $\Pi$ for both the point measure and its support.

Given a realization of $\eta$ and $\Pi$, define point measures $\Pi_\alpha$, $\Pi_\beta$ on $\LL$ by setting
\begin{align}\label{e:Pialpha}
\Pi_\alpha & = \left\{ (x,t) \in \LL \colon\, \exists\, u \in [0,\Lambda] \text{ such that } (x,t,u) \in \Pi \text{ and } u \leq \alpha(\eta_t(x)) \right\},
\end{align}
\begin{align}\label{e:Pibeta}
\Pi_\beta = \left\{ 
(x,t) \in \LL \colon\,
\begin{array}{ll}
\exists\, u \in [0,\Lambda] \text{ such that } (x,t,u) \in \Pi \text{ and} \\
\alpha(\eta_t(x)) < u \leq \alpha(\eta_t(x)) + \beta(\eta_t(x)) 
\end{array}
\right\}.
\end{align}
Note that $\Pi_\alpha$ and $\Pi_\beta$ above are projections of subsets of $\Pi$ onto $\mathbb{L}$, where we only keep points $(x,t) \in \mathbb{L}$ whose original point $(x,t,u) \in \Pi$ has suitable values of $u$.
By Poisson thinning, 
conditionally on $\eta$, the point measures $\Pi_\alpha$, $\Pi_\beta$ are independent Poisson point processes with intensity measures $\alpha(\eta_t(z)) \#(\dd z) \otimes \dd t$ and $\beta(\eta_t(z)) \#(\dd z) \otimes \dd t$, respectively.

For each $y = (x_0,t_0) \in \LL$, 
we define the random walk started at $y$ as 
the unique c\`adl\`ag path $X^y=(X^y_t)_{t \geq 0}$ such that $X^y_0 = \pi_1(y) = x_0$ and, for all $t>0$,
\begin{equation}
X^y_t =
\begin{cases}
 \text{$X^y_{t-} + 1$ \quad if $(X^y_{t-}, t_0+ t) \in \Pi_\alpha$};\\ 
 \text{$X^y_{t-} - 1$ \quad if $(X^y_{t-}, t_0 + t) \in \Pi_\beta$};\\ 
 \text{$X^y_{t-}$ \,\,\, \qquad otherwise}.
\end{cases}
\end{equation}

We write $Y^y = (Y^y_t)_{t \geq 0}$ for the space-time position of $X^y$, i.e., $Y^y_t =(X^y_t, t+t_0)$.

For simplicity we write $Y=Y^{(0,0)}$, $X=X^{(0, 0)}$, and we denote by $\mathfrak{X}$ the collection of random walks $(X^y)_{y \in \LL}$.
With this construction, conditionally on $\eta$, $X$ is a time-inhomogeneous Markov jump process that, when at site $x$ at time $t$, has jump rates 
$\alpha(\eta_{t}(x))$ to $x+1$ and $\beta(\eta_{t}(x))$ to $x-1$.
Note that, by translation invariance of $\eta$ and $\Pi$, $X^{(x,t)} - x$ has the same distribution as $X$ for each fixed $(x,t)$.

\subsection{Results: general setting}
\label{ss:resultsgeneral}

In order to state our law of large numbers, we introduce next our second main assumption,
regarding ballisticity of the random walk. Define $\log^+u = \log (u \vee 1)$.

\medskip
\noindent {\bf Ballisticity Assumption:}
there exist constants $v_\star, \kappa_\star, C_\star>0$ and $\gamma_\star>1$ such that
\begin{equation}
\label{e:BAL}
\PP \left( X_t \leq v_\star t \right) \leq C_\star e^{-\kappa_\star (\log^+ t)^{\gamma_\star}} \qquad \text{ for all } t  \geq 0.
\end{equation}

Our main theorem reads as follows.

\begin{theorem}
\label{t:maingeneral}
Suppose that Assumptions~\eqref{e:DEC} and \eqref{e:BAL} hold with $v_\star > v_\circ$, and let
\begin{equation}
\label{e:defgammakappa}
\gamma = \min\{\gamma_\circ, \gamma_\star\}, \qquad \kappa = \tfrac19 \min\{\kappa_\circ, \kappa_\star\}.
\end{equation}
There exists a speed $v \geq v_\star$ and, for any $\varepsilon>0$, there exists a constant $C_\varepsilon > 0$ such that
\begin{equation}
\label{e:LLNdecay}
\PP\left( \exists\, t \geq T \colon |X_t - vt| \geq \varepsilon t \right) \leq C_\varepsilon e^{-\kappa(\log^+ T)^{\gamma}} \quad \text{ for all } T \geq 0.
\end{equation}
In particular,
\begin{equation}\label{e:genLLN}
\lim_{t \to \infty} \frac{X_t}{t} = v \quad \PP\text{-almost surely and in } L^p \text{ for each } p \geq 1.
\end{equation}
\end{theorem}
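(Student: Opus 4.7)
My approach is a multi-scale renormalization: I pick a geometric sequence of scales, establish concentration of $X_t/t$ around scale-dependent typical speeds via induction fueled by the decoupling \eqref{e:DEC} (with horizontal separation coming from \eqref{e:BAL}), and then show that these typical speeds converge. This mirrors strategies from earlier works on random walks in dynamic environments under decoupling assumptions, but the sub-exponential strength of \eqref{e:DEC} dictates how fast the scales may grow.

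Concretely, fix a sequence $L_{n+1}=\lceil L_n^\rho\rceil$ with $\rho\in(1,2^{1/\gamma})$, and let
\[
v_n^+ = \inf\bigl\{v : \PP(X_{L_n} \geq v L_n) \leq \tfrac14\bigr\}, \qquad v_n^- = \sup\bigl\{v : \PP(X_{L_n} \leq v L_n) \leq \tfrac14\bigr\}.
\]
The ballisticity \eqref{e:BAL} ensures $v_n^-\geq v_\star$ for $n$ large; write $v^+=\limsup_n v_n^+$ and $v^-=\liminf_n v_n^-$. The first main estimate (the content of Section~\ref{s:Proof_decaiv}) is a deviation bound of the form
\[
\PP\bigl(|X_{L_n} - v_n L_n| > \varepsilon L_n\bigr) \leq C_\varepsilon \exp\bigl(-\kappa (\log L_n)^\gamma\bigr),
\]
valid for $v_n$ equal to a typical value at scale $n$. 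I would prove this inductively: partitioning $[0,L_{n+1}]$ into $\sim L_{n+1}/L_n$ sub-intervals of length $L_n$, a deviation at scale $L_{n+1}$ forces a positive fraction of these sub-intervals to witness a deviation of order $\varepsilon L_n$. Using the crude bound $|X_t-X_s|\leq N_{[s,t]}^\Lambda$ with $N^\Lambda$ a rate-$\Lambda$ Poisson process, the walk in each sub-block is confined to a horizontal slab of width $O(L_n)$ at negligible cost. Since \eqref{e:BAL} guarantees it traverses horizontal distance at least $v_\star L_n > v_\circ L_n$ per block, consecutive slabs are separated by more than $v_\circ L_n$ plus slack — precisely the geometry required by \eqref{e:DEC}, with an error $\exp(-\kappa_\circ(\log L_n)^{\gamma_\circ})$ per pair of slabs. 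A Chernoff-type count of bad sub-blocks combined with the decoupled near-independence yields a recursion
\[
p_{n+1}(\varepsilon) \leq p_n(\varepsilon/2)^{\lfloor L_{n+1}/(CL_n)\rfloor} + C\exp\bigl(-\kappa_\circ(\log L_n)^{\gamma_\circ}\bigr),
\]
and the condition $\rho^\gamma<2$ makes this iterate to the target stretched-log decay.

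Next I would show $v^-=v^+$, the content of Section~\ref{s:v-=v+}. If $v^+>v^-$, then for some $\varepsilon>0$ one has $|v_n^+-v_n^-|>4\varepsilon$ along a subsequence, which is incompatible with concentration of $X_{L_n}/L_n$ in a $2\varepsilon$-window around each of $v_n^\pm$. Setting $v:=v^+=v^-\geq v_\star$, the deviation bound at times $T=L_n$ yields \eqref{e:LLNdecay} along this subsequence; the continuous-time supremum version is obtained by a union bound over a dyadic family of intermediate times together with a Poisson bound on the fluctuations between consecutive scales. Finally, \eqref{e:genLLN} follows from \eqref{e:LLNdecay} via Borel--Cantelli for the a.s.\ statement and via uniform integrability (since $|X_t/t|\leq N_t^\Lambda/t$) for $L^p$ convergence.

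The main obstacle is the first step: the decoupling error in \eqref{e:DEC} is only stretched-logarithmic in distance, leaving very little room for combinatorial overhead in the recursion. This is what forces the growth rate $\rho$ below $2^{1/\gamma}$ and requires careful tracking of constants through each iteration — the factor $\tfrac19$ in \eqref{e:defgammakappa} is an artifact of this bookkeeping. A secondary difficulty is producing the horizontal separation between blocks: ballisticity holds only in probability, so one must first condition on a high-probability trapping event that localizes the trajectory to a slab before \eqref{e:DEC} may be invoked — this is precisely where the strict inequality $v_\star>v_\circ$ gets used.
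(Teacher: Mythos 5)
Your overall architecture (renormalize, show scale-wise concentration, pin down a limiting speed, then Borel--Cantelli) mirrors the paper's, but two central steps would not go through as sketched.

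The main problem is the recursion $p_{n+1}(\varepsilon) \leq p_n(\varepsilon/2)^{\lfloor L_{n+1}/(CL_n)\rfloor} + C e^{-\kappa_\circ(\log L_n)^{\gamma_\circ}}$. Getting a power $m \sim L_{n+1}/L_n$ requires decoupling, simultaneously or by peeling, a positive fraction of the $m$ sub-blocks. But the decoupling condition \eqref{condition_to_decouple} charges a cost $\usebigconstant{c:decoupling_2}\,s$ proportional to the common height $s$ of the two boxes. As soon as you try to decouple one sub-block from the union of the others, that union has height $\Theta(m L_n)$, so the required horizontal separation is $\gtrsim \usebigconstant{c:decoupling_2} m L_n$, whereas ballisticity only provides separation $\approx v_\star L_n$ per block — the budget is off by a factor $m$. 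The paper gets around this (Lemma~\ref{l:cascading_boxes} and Lemma~\ref{l:recgen}) by decoupling exactly \emph{two} scale-$k$ boxes at a time, each of height $h L_k$, separated vertically by a large but \emph{fixed} number $N$ of sub-blocks so that the slack $(v_\star - v_\circ) N h L_k$ swallows $\usebigconstant{c:decoupling_2}\,h L_k + \usebigconstant{c:decoupling_3}$; this yields the recursion $p_{k+1} \leq C\ell_k^4\{p_k^2 + p_k(\text{fast})+\text{err}\}$ with $p_k^2$, not $p_k^m$. That is what actually imposes $(1+\nu)^\gamma < 2$; the constraint you wrote down is correct but does not in fact follow from the recursion you propose. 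Relatedly, your events are defined for the walk started at a fixed space-time origin, which does not iterate: after one sub-block the walker is at a random position, so you need a supremum over starting points in a window (the role of $A_{H,w}(v)$ and $I_H(w)$ in \eqref{e:defA_H}--\eqref{p_H}).

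The second gap is in "$v^-=v^+$''. Concentration of $X_{L_n}/L_n$ near its quartiles forces $v_n^+ - v_n^-$ to be small, but it does not prevent the pair $(v_n^-,v_n^+)$ from oscillating as $n\to\infty$, so $\limsup v_n^+$ and $\liminf v_n^-$ need not agree, and your "incompatibility'' argument is circular. The paper defines $v_\pm$ scale-independently via \eqref{def_v_+}--\eqref{def_v_-} and proves $v_+=v_-$ by a separate, nontrivial argument (Section~\ref{s:v-=v+}): it introduces trapped and threatened points, shows threatened points are overwhelmingly common, and derives that if $v_-<v_+$ the walk would be forced to exceed speed $v_+$ on a sub-block with non-negligible probability, contradicting Proposition~\ref{prop:decay_v+-}. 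Some version of this delay/compensation mechanism is needed; mere quartile concentration at each scale is not enough.
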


\begin{remark}
Theorem~\ref{t:maingeneral} may be interpreted as follows. As previously mentioned, $v_{\circ}$ can be seen as an upper bound on the speed with which information spreads in the environment. 
If the random walk travels faster than this speed, then it eventually leaves behind all information that it uses, which introduces enough independence
for a law of large numbers to hold. 
\end{remark}

Assumption~\eqref{e:BAL} may be hard to verify in examples.
However, it is always true in the so-called \emph{non-nestling} regime, when all local drifts are sufficiently large. This leads us to the following result.

\begin{theorem}
\label{t:largedrift}
Suppose that Assumption~\eqref{e:DEC} holds and that
\[
\inf_{\xi \in E} \left\{ \alpha(\xi) - \beta(\xi) \right\} > v_\circ.
\]
Then Assumption~\eqref{e:BAL} holds with $v_\star > v_\circ$ and $\gamma_\star = \gamma_\circ$.
In particular, the conclusions of Theorem~\ref{t:maingeneral} are in force.
\end{theorem}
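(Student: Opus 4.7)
The plan is to reduce Theorem~\ref{t:largedrift} to Theorem~\ref{t:maingeneral} by verifying the ballisticity assumption~\eqref{e:BAL} directly from the uniform drift bound, via a conditional exponential martingale (Chernoff) argument that actually yields \emph{exponential} decay in $t$ — strictly stronger than the stretched-exponential decay required. Write $d_0 := \inf_{\xi\in E}\{\alpha(\xi)-\beta(\xi)\}$, so that $d_0 > v_\circ$ by hypothesis, and fix any $v_\star \in (v_\circ, d_0)$.

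The first step is the exponential martingale. Conditional on $\eta$, the walk $X$ is a time-inhomogeneous Markov jump process with generator $\mathcal L_t^\eta f(x) = \alpha(\eta_t(x))[f(x+1)-f(x)] + \beta(\eta_t(x))[f(x-1)-f(x)]$. Applying this to $f(x) = e^{-\lambda x}$ shows that, for every $\lambda>0$,
\[
M^\lambda_t := \exp\!\Bigl(-\lambda X_t - \int_0^t \Psi_\lambda(s,X_s)\,\dd s\Bigr), \qquad \Psi_\lambda(s,x) := \alpha(\eta_s(x))(e^{-\lambda}-1) + \beta(\eta_s(x))(e^{\lambda}-1),
\]
is a $\PP(\cdot\mid\eta)$-martingale with $\EE[M^\lambda_t\mid\eta]=1$. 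Using the elementary inequalities $e^{-\lambda}-1 \leq -\lambda+\lambda^2$ and $e^{\lambda}-1 \leq \lambda+\lambda^2$ for $\lambda\in(0,1]$, together with $\alpha+\beta\leq\Lambda$ (cf.\ \eqref{e:BR}) and $\alpha-\beta\geq d_0$, one gets the \emph{deterministic} pointwise bound $\Psi_\lambda(s,x) \leq -d_0\lambda + \Lambda\lambda^2$. Hence $\int_0^t \Psi_\lambda(s,X_s)\,\dd s \leq t(-d_0\lambda+\Lambda\lambda^2)$ almost surely, and the martingale identity yields $\EE[e^{-\lambda X_t}\mid\eta] \leq e^{t(-d_0\lambda+\Lambda\lambda^2)}$.

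Next, a Markov (Chernoff) bound gives
\[
\PP(X_t \leq v_\star t\mid\eta) \leq e^{\lambda v_\star t}\EE[e^{-\lambda X_t}\mid\eta] \leq \exp\!\bigl(t[\lambda(v_\star - d_0) + \Lambda\lambda^2]\bigr).
\]
Choosing $\lambda = \min\{(d_0 - v_\star)/(2\Lambda),\,1\}$, which is a strictly positive constant depending only on $d_0, v_\star, \Lambda$, the bracketed expression becomes a strictly negative constant $-c$. Taking expectation over $\eta$ yields $\PP(X_t \leq v_\star t) \leq e^{-ct}$ for all $t\geq 0$.

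Finally, since $\gamma_\circ>1$, one has $e^{-ct} \leq C\,e^{-\kappa(\log^+ t)^{\gamma_\circ}}$ for suitable constants $C,\kappa>0$ (trivially for $t\leq 1$, and for $t\geq 1$ because $ct$ dominates any power of $\log t$). This verifies Assumption~\eqref{e:BAL} with the chosen $v_\star > v_\circ$ and $\gamma_\star = \gamma_\circ$, so Theorem~\ref{t:maingeneral} applies and yields the claim. There is no substantial obstacle in this argument; the only mild care needed is to check that the Taylor bound on $\Psi_\lambda$ is uniform in $(s,x)$ (which follows from the non-nestling hypothesis and the rate bound~\eqref{e:BR}) so that the pathwise inequality on $A_t := \int_0^t \Psi_\lambda(s,X_s)\,\dd s$ can be used to pass from the martingale identity to the unconditional Laplace bound.
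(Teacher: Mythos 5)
Your proposal is correct, but it takes a genuinely different route from the paper. The paper proves Theorem~\ref{t:largedrift} in Appendix~\ref{s:martingale_appendix} by first showing (Lemma~\ref{l:submart}) that $M_t = X_t - tu$ is a c\`adl\`ag submartingale whenever $u < \inf_\xi\{\alpha(\xi)-\beta(\xi)\}$, via a direct infinitesimal computation with the Poisson marks, and then feeding this into a general-purpose deviation estimate (Proposition~\ref{p:dev_submart}) for submartingales whose unit-time increments have uniform exponential tails; that estimate splits the increments into a bounded part (Azuma) and a tail part (Markov), and produces the decay rate $e^{-c t^{1/3}}$, which is then absorbed into \eqref{e:BAL} since $\gamma_\circ>1$. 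You instead condition on $\eta$, exploit that $X$ is then a time-inhomogeneous Markov jump process with bounded rates, and use the exponential (Dol\'eans-Dade/Feynman-Kac) martingale $M^\lambda_t$ together with the \emph{deterministic, pathwise} bound $\Psi_\lambda(s,X_s)\le -d_0\lambda+\Lambda\lambda^2$ (valid uniformly because of the non-nestling hypothesis and \eqref{e:BR}) to get a Chernoff bound $\PP(X_t\le v_\star t)\le e^{-ct}$ --- strictly stronger than what the paper obtains. What the paper's route buys is a reusable lemma (Proposition~\ref{p:dev_submart}) that applies to any submartingale with exponentially-tailed increments and avoids having to justify an exponential martingale identity for a jump process with merely measurable time-dependent rates; what your route buys is a shorter argument and true exponential decay. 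The one place to be a bit careful --- and which the paper's approach sidesteps --- is the verification that $M^\lambda_t$ is a genuine $\PP(\cdot\mid\eta)$-martingale and not merely a local one; you flag this, and it does follow from the boundedness of the rates (so that $\Psi_\lambda$ is bounded and $|X_t|$ has Poisson tails by Lemma~\ref{l:bdincr}), but you should make the argument explicit since $\eta_t$ is only assumed measurable in $t$, not right-continuous, and one must check that the conditional jump intensity of $X$ really is $\alpha(\eta_t(X_{t-}))\,\dd t\otimes\delta_{+1}+\beta(\eta_t(X_{t-}))\,\dd t\otimes\delta_{-1}$ before invoking the exponential martingale.
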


The proof of Theorem~\ref{t:largedrift} is given in Appendix~\ref{s:martingale_appendix}.

\subsection{Results: zero-range process}
\label{ss:resultsZR}

The one-dimensional zero-range process is an interacting particle system introduced by Spitzer in \cite{SPITZER70}. It is defined as a Markov process $\eta=(\eta_t)_{t\geq 0}$ with state-space $\N_0^\mathbb{Z}$, where $\eta_t(x)$ is interpreted as the number of particles occupying site $x$ at time $t$.
Starting from an initial configuration $\eta_0$, its evolution for $t \geq 0$ may be described as follows: 
independently for each $x$, with rate depending only on $\eta_t(x)$, a particle is chosen among those at $x$ and moved
to $x+1$ or $x-1$ with equal probability. With this dynamics, particles may interact only when they share the same site, hence the name \textit{zero-range}. 

Formally, fix a non-negative function $g : \mathbb{N}_{0} \to \mathbb{R}_+$ with $g(0) = 0$ and a translation invariant transition probability $p( \cdot, \cdot)$ on $\mathbb{Z}$. The zero-range process with rate function $g$ and transition probability $p$ is the continuous-time Markov process $\eta = (\eta_t)_{t\geq0}$ with state-space $\mathbb{N}_{0}^{\mathbb{Z}}$ whose infinitesimal generator $L$ acts on bounded local functions $f$ as
\begin{equation}\label{Gerador}
Lf(\eta)=\displaystyle\sum_{x \in \mathbb{Z}}g(\eta(x))\sum_{y \in \mathbb{Z}}p(x,y)[f(\eta^{xy})-f(\eta)],
\end{equation}
where $\eta^{xy}$ is obtained from $\eta$ by removing a particle from $x$ and placing it at $y$.
Here we will only consider the symmetric nearest-neighbor case, i.e.,  $p(0,\pm 1)= 1/2$. 

To guarantee the existence of the process, conditions on $g$ must be imposed.
Here we will assume that, for positive constants $\Gamma_- \leq \Gamma_+$,
\begin{equation}\label{ZR1}
\Gamma_- \leq g(k)-g(k-1)\leq \Gamma_+ \quad  \text{ for all } k \geq 1.
\end{equation}
This condition means that $g$ is increasing with a growth rate not far from linear, 
and implies that $\eta$ is well defined~\cite{ANDJEL82}.

We describe next the family of distributions that we will consider for the initial configuration $\eta_{0}$. 
For $\phi \geq 0$, consider the product measure with marginals $\hat{\mu}_\phi$ given by
\begin{equation}\label{distribuition}
\hat{\mu}_\phi(k)=\frac{1}{Z(\phi)}\frac{\phi^k}{g(k)!}, \qquad k \in \mathbb{N}_{0},
\end{equation}
where $g(k)!:=g(k)g(k-1)\cdot \cdot \cdot g(1)$, $g(0)!=1$ and $Z(\phi)$ is a normalizing constant.
This provides a one-parameter family of invariant distributions for the zero-range process \cite{ANDJEL82}.
When $g$ is linear, $\hat{\mu}_\phi$ is a Poisson distribution, corresponding to the case where particles perform independent random walks.

We call the mean number of particles per site $R(\phi) = \sum_{k} k \hat{\mu}_\phi (k)$ the \emph{density of particles}.
A straightforward computation reveals that $R(\phi)$ is an increasing bijection, so for $\rho \geq 0$ we may define $\mu_\rho$ as the product measure with marginals $\hat{\mu}_{R^{-1}(\rho)}$. We refer the reader to~\cite[Section 2.3]{KL98} for further details.

We will denote by $\PP^\rho$ the underlying probability measure on an enlarged probability space supporting both the zero-range process with initial distribution $\mu_\rho$ and the random walk $X$ constructed in Section~\ref{ss:RW}. We write $\EE^\rho$ for the corresponding expectation.

For the zero-range process as described above, the decoupling inequality \eqref{e:DEC} was obtained in \cite{BALDASSO17}, as stated next.
\begin{proposition}
\label{p:DEC_ZR}
Given two densities $0<\rho_- \leq \rho_+ < \infty$, there exist positive constants $v_\circ, \kappa_\circ, C_\circ, \usebigconstant{c:decoupling_2}, \usebigconstant{c:decoupling_3}$ such that, for any $\rho \in [\rho_-, \rho_+]$, Assumption~\eqref{e:DEC} holds with these constants and $\gamma_\circ = 5/4$ for the zero-range process under $\PP^\rho$.
\end{proposition}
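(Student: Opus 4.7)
The plan is to invoke the decoupling inequality established in \cite{BALDASSO17} for the symmetric nearest-neighbor zero-range process, and then verify that its form fits exactly the framework of Assumption~\eqref{e:DEC}. First I would recall the precise statement from \cite{BALDASSO17}: under the product invariant measure $\mu_\rho$, for any two events $A_1, A_2$ supported on space-time boxes $B_1, B_2$ whose spatial separation $\dist_H$ is large enough relative to their time separation $\dist_V$ and height $s$ (a cone-type condition of the form $\dist_H \geq v_\circ \dist_V + \mathrm{const}\cdot s + \mathrm{const}$), one has
\begin{equation*}
\PP^\rho(A_1 \cap A_2) \leq \PP^\rho(A_1)\PP^\rho(A_2) + C\, e^{-\kappa (\log \dist_H)^{5/4}}.
\end{equation*}
Extracting constants $v_\circ, \kappa_\circ, C_\circ, \usebigconstant{c:decoupling_2}, \usebigconstant{c:decoupling_3}$ and $\gamma_\circ=5/4$ from the statement in \cite{BALDASSO17} is then a matter of bookkeeping.

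Next I would upgrade the inequality from indicator functions of events to arbitrary non-negative random variables $f_1, f_2$ with $\|f_i\|_\infty \leq 1$. The standard layer-cake trick does the job: writing $f_i = \int_0^1 \mathbbm{1}_{\{f_i > u\}}\, \mathrm{d}u$, each level set $\{f_i > u\}$ is supported on $B_i$, so Fubini and the event-version of the decoupling give
\begin{equation*}
\EE^\rho[f_1 f_2] - \EE^\rho[f_1]\EE^\rho[f_2] = \int_0^1\!\!\int_0^1 \big( \PP^\rho(\{f_1>u\}\cap\{f_2>v\}) - \PP^\rho(f_1>u)\PP^\rho(f_2>v) \big)\,\mathrm{d}u\,\mathrm{d}v,
\end{equation*}
which is bounded by the same error $C_\circ e^{-\kappa_\circ (\log \dist_H)^{5/4}}$ since the integrand is uniformly estimated.

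The remaining point is uniformity over $\rho \in [\rho_-,\rho_+]$. This follows because the constants in the construction of \cite{BALDASSO17} depend on $\rho$ only through moments and gradient bounds of $\hat{\mu}_{R^{-1}(\rho)}$ and on the jump rate function $g$ via \eqref{ZR1}; both quantities are continuous in $\rho$, so taking suprema/infima over the compact interval $[\rho_-,\rho_+]$ produces uniform constants.

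The main obstacle I anticipate is a cosmetic but careful translation of the geometric hypothesis in \cite{BALDASSO17}, whose boxes and ``sprinkling'' parameters may be parametrized differently, into the precise cone condition \eqref{condition_to_decouple}; this should amount to absorbing lower-order terms into $\usebigconstant{c:decoupling_2}$ and $\usebigconstant{c:decoupling_3}$, but it is where any mismatch between the two formulations would have to be reconciled.
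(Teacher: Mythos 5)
Your approach matches the paper's: the proof there is a direct citation of \cite[Proposition~3.4.8]{BALDASSO17}, accompanied only by the remark that the argument carries over from the square boxes of side $s$ used in that reference to the half-infinite slabs of Assumption~\eqref{e:DEC}. Your additional scaffolding (the layer-cake upgrade from events to bounded nonnegative random variables, uniformity over the compact interval $[\rho_-,\rho_+]$, and the reconciliation of the cone parametrization into $\usebigconstant{c:decoupling_2}$, $\usebigconstant{c:decoupling_3}$) is a consistent fleshing-out of what the paper leaves implicit, with no conflict in substance.
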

\begin{proof} See \cite[Proposition~3.4.8]{BALDASSO17}.
The statement therein assumes that $B_1$, $B_2$ are square boxes with side-length $s$, but the same proof works under our assumptions.
See also \cite[Appendix~A]{Weberson}.
\end{proof}

Together with Theorems~\ref{t:maingeneral}--\ref{t:largedrift}, this implies the following.
\begin{corollary}
\label{c:ZR}
Given $0<\rho_- \leq \rho_+ < \infty$, there exists $v_\circ>0$ such that, if
\[
\displaystyle \inf_{\xi \in \mathbb{N}_0} \left\{\alpha(\xi)  - \beta(\xi) \right\} > v_\circ,
\]
then there exists $v:[\rho_-, \rho_+]\to (v_\circ, \infty)$ such that, for every $\rho \in [\rho_-, \rho_+]$,
\begin{equation}\label{e:LLN_ZR}
\lim_{t \to \infty} \frac{X_t}{t} = v(\rho) \qquad \PP^\rho\text{-almost surely.}
\end{equation}
\end{corollary}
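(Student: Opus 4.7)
The plan is to combine the decoupling estimate of Proposition~\ref{p:DEC_ZR} with the large-drift criterion of Theorem~\ref{t:largedrift}, and then invoke Theorem~\ref{t:maingeneral}. Essentially all the heavy lifting has already been done; the corollary is a clean concatenation of prior inputs.

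First, I would fix $\rho \in [\rho_-, \rho_+]$ and verify that the zero-range process $\eta$ with initial distribution $\mu_\rho$ is a valid random environment in the sense of Section~\ref{ss:DRE}. Measurability (E1) is immediate from the standard càdlàg construction of $\eta$ under~\eqref{ZR1}. Property~(E2) follows from the product structure of $\mu_\rho$ (hence spatial shift-invariance of the initial law) together with the spatial translation invariance of the symmetric nearest-neighbor generator~\eqref{Gerador}, combined with the Markov property; see \cite[Section~2.3]{KL98}.

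Second, Proposition~\ref{p:DEC_ZR} supplies constants $v_\circ, \kappa_\circ, C_\circ, \usebigconstant{c:decoupling_2}, \usebigconstant{c:decoupling_3}>0$ and $\gamma_\circ = 5/4$, \emph{uniform} over $\rho \in [\rho_-, \rho_+]$, such that the decoupling assumption~\eqref{e:DEC} holds under $\PP^\rho$. The $v_\circ$ announced in the statement of the corollary is precisely this constant. Under the hypothesis $\inf_{\xi \in \mathbb{N}_0}\{\alpha(\xi)-\beta(\xi)\} > v_\circ$, Theorem~\ref{t:largedrift} immediately yields the ballisticity assumption~\eqref{e:BAL} with some $v_\star > v_\circ$ and $\gamma_\star = \gamma_\circ$. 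Theorem~\ref{t:maingeneral} then provides a speed $v(\rho) \geq v_\star > v_\circ$ fulfilling~\eqref{e:genLLN} under $\PP^\rho$. Letting $\rho$ range over $[\rho_-, \rho_+]$ defines the required function $v : [\rho_-, \rho_+] \to (v_\circ, \infty)$ satisfying~\eqref{e:LLN_ZR}.

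No substantial obstacle arises. The only point worth flagging explicitly is the uniformity in $\rho$ of the decoupling speed $v_\circ$ appearing in Proposition~\ref{p:DEC_ZR}; without it, one could not claim a single drift threshold valid simultaneously for all densities in $[\rho_-, \rho_+]$. Since that uniformity is already built into the statement of Proposition~\ref{p:DEC_ZR} (its constants depend only on $\rho_\pm$, not on the individual $\rho$), the proof is complete after these few lines.
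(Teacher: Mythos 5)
Your proposal is correct and follows exactly the paper's own route: the paper's proof of Corollary~\ref{c:ZR} consists of the single line ``Follows from Theorems~\ref{t:maingeneral}--\ref{t:largedrift} and Proposition~\ref{p:DEC_ZR},'' and you simply spell out that concatenation, together with the (correct and worth-noting) observation that the uniformity of $v_\circ$ over $\rho\in[\rho_-,\rho_+]$ in Proposition~\ref{p:DEC_ZR} is what allows a single drift threshold across densities.
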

\begin{proof}
Follows from Theorems~\ref{t:maingeneral}--\ref{t:largedrift} and Proposition~\ref{p:DEC_ZR}.
\end{proof}

\subsection{Results: asymmetric exclusion process}
\label{ss:resultsASEP}

The one-dimensional exclusion process is another interacting particle system introduced by Spitzer \cite{SPITZER70}, in which particles perform independent continuous-time random walks except for the \emph{exclusion rule}: if a particle tries to jump to a site that is already occupied, this jump is suppressed. It may be defined as a Feller process $\eta = (\eta_t)_{t \geq 0}$
with state-space $\{0,1\}^{\Z}$ whose generator $L$ acts on a bounded local function $f$ as
\begin{equation}
\label{e:geradorASEP}
Lf(\eta) = \sum_{x,y \in \Z} p(x,y) \left[ f(\eta^{xy}) - f(\eta)\right],
\end{equation}
where $p(\cdot,\cdot)$ is a translation-invariant transition probability and $\eta^{xy}$ is obtained from $\eta$ by interchanging the values of $\eta(x)$ and $\eta(y)$.
Here we will only consider the \emph{simple} case, i.e., when $p(0,1) = p = 1-p(0,-1)$, $p\in [0,1]$. 
The process is then called \emph{symmetric} when $p=\tfrac12$ and \emph{asymmetric} when $p \neq \tfrac12$.

As initial distribution, we fix $\rho \in [0,1]$ and take $\mu_\rho$ to be the product measure with marginals Bernoulli$(\rho)$.
This provides a family of invariant distributions for the exclusion process.
For more details, we refer the reader to \cite[Chapter~8]{Liggett}.

We denote by $\PP^{p,\rho}$ the underlying probability measure on an enlarged probability space supporting both the simple exclusion process with $p(0,1) = p$ and initial distribution $\mu_\rho$ and the random walk $X$ constructed in Section~\ref{ss:RW}.

A stronger form of the lateral decoupling inequality \eqref{e:DEC} was obtained for the simple symmetric exclusion process in \cite{HKA19}.
We provide next a version that holds also in the asymmetric case.
\begin{proposition}
\label{p:DEC_ASEP}
For any $v_\circ>1$, $\gamma_\circ>1$, and $\kappa_{\circ}, C_\circ > 0$, 
there exist positive constants $\usebigconstant{c:decoupling_2}, \usebigconstant{c:decoupling_3}$ 
such that, for any $p,\rho \in [0,1]$, Assumption~\eqref{e:DEC} holds with these constants
for the exclusion process under $\PP^{p,\rho}$.
\end{proposition}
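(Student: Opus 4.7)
The plan is a finite-speed-of-propagation argument in the graphical construction of ASEP, in which independent Poisson processes of rates $p$ and $1-p$ are placed on each oriented edge. The total Poisson rate on each undirected edge equals $p+(1-p)=1$ uniformly in $p$, which will be the source of the uniform-in-$p$ bound. By time-stationarity of $\mu_\rho$, one shifts time so that $\min\{b,d\}=0$; without loss of generality $b=0\le d$. Fix $v=(1+v_\circ)/2\in(1,v_\circ)$ and, given a buffer $K>0$ to be chosen, define the deterministic backward cones
\[
\tilde B_1 := \{(y,u)\in\R^2 : 0\le u\le s,\; y\le a+v(s-u)+K\},
\]
\[
\tilde B_2 := \{(y,u)\in\R^2 : 0\le u\le d+s,\; y\ge c-v(d+s-u)-K\}.
\]
A short computation shows that $\tilde B_1\cap\tilde B_2=\emptyset$ is equivalent to $\dist_H>v(d+2s)+2K$, which under the separation hypothesis is ensured by taking $\usebigconstant{c:decoupling_2}=3v_\circ$ and $\usebigconstant{c:decoupling_3}>2K$. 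Taking $K:=\tfrac12(\dist_H-v(d+2s))-1$, the largest allowed, the hypothesis $\dist_H\ge v_\circ(d+2s)+\usebigconstant{c:decoupling_3}$ yields $K\ge c_0(v_\circ)\dist_H$ for some $c_0(v_\circ)>0$, once $\usebigconstant{c:decoupling_3}$ is taken at least a mild constant.

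Now construct truncated ASEPs $\eta^{(i)}$ by running the dynamics using only the initial configuration and Poisson marks lying in $\tilde B_i$ (setting the remaining initial data to zero and ignoring all other marks), and set $\tilde f_i:=f_i(\eta^{(i)}|_{B_i})$. Disjointness of $\tilde B_1,\tilde B_2$ together with independence of the Bernoulli and Poisson variables across disjoint regions gives $\EE[\tilde f_1\tilde f_2]=\EE[\tilde f_1]\EE[\tilde f_2]$. Let $G_i$ be the event that the backward dependency set of $B_i$ is contained in $\tilde B_i$, so that $\tilde f_i=f_i$ on $G_i$. A standard decomposition using $|f_i|,|\tilde f_i|\le 1$ gives
\[
\EE[f_1f_2]-\EE[f_1]\EE[f_2]\le 4\bigl(\PP(G_1^c)+\PP(G_2^c)\bigr).
\]
To bound $\PP(G_1^c)$, I would track the rightmost coordinate $R_\tau$ of the backward dependency set of $B_1$ at backward time $\tau$. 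It advances from $R$ to $R+1$ exactly at Poisson marks on the frontier edge $(R,R+1)$; crucially, in the asymmetric case \emph{both} left-arrow and right-arrow marks on that edge add $R+1$ to the dependency set, yielding an advance rate equal to the total edge rate $p+(1-p)=1$. Therefore $R_\tau-a$ is stochastically dominated by a rate-$1$ Poisson process $N_\tau$, whence $G_1^c\subseteq\{\sup_{\tau\ge 0}(N_\tau-v\tau)>K\}$. The standard Cram\'er--Lundberg / exponential martingale bound then gives $\PP(G_i^c)\le e^{-\lambda^\star(v)K}$ for some $\lambda^\star(v)>0$, with the symmetric bound applying to $G_2$.

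Combining with $K\ge c_0(v_\circ)\dist_H$ yields a total error of order $e^{-c_1(v_\circ)\dist_H}$, which decays strictly faster than $\exp(-\kappa_\circ(\log\dist_H)^{\gamma_\circ})$. Choosing $\usebigconstant{c:decoupling_3}=\usebigconstant{c:decoupling_3}(v_\circ,\kappa_\circ,\gamma_\circ,C_\circ)$ large enough so that $e^{-c_1\dist_H}\le C_\circ\exp(-\kappa_\circ(\log\dist_H)^{\gamma_\circ})$ holds whenever $\dist_H\ge\usebigconstant{c:decoupling_3}$ completes the proof, uniformly in $p,\rho\in[0,1]$. The main technical point---and the chief obstacle---is the rate-$1$ domination of the frontier growth uniformly in $p$: establishing it requires careful inspection of the graphical representation to verify that both arrow orientations on the frontier edge contribute equally to expanding the dependency set. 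Once this is in place the rest of the argument is independent of the asymmetry parameter $p$.
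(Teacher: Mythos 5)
Your proposal is correct, and it takes a genuinely different route from the paper. The paper argues forward in time: it runs the graphical construction with labelled particles, introduces a hierarchy of particle classes indexed by the starting region, and shows (events $G_1, G_2, G_3, G_{23}$) that with high probability first-class particles stay to the right of a barrier, second/third-class particles stay to the left, and third-class particles never cross into $B_2$ by time $d_V + 2s$. On this good event the restriction of $\eta$ to $B_1$ and $B_2$ is determined by the initial data and clocks in disjoint spatial half-lines, and each failure probability is bounded by a Poisson large-deviation estimate using the fact that particles of any class jump right or left at rate at most $1$. Your argument instead goes backward in time: you define light cones $\tilde B_1, \tilde B_2$ of slope $v \in (1, v_\circ)$, construct truncated processes from the data in each cone, and control the probability that the backward dependency set escapes the cone by dominating the rightmost/leftmost frontier of the influence zone by a rate-$1$ Poisson process. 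Your key observation --- that for a right arrow on edge $(x,x+1)$ the post-arrow states at both $x$ and $x+1$ depend on both pre-arrow states, so every arrow (right or left) on the frontier edge advances the frontier, giving total rate $p + (1-p) = 1$ uniformly in $p$ --- is correct, and it is the backward-in-time counterpart of the paper's "each particle class has total jump rate at most $1$" observation. Both arguments yield exponential decay in $\dist_H$ and are uniform in $p, \rho$. Your version is arguably cleaner in that it avoids the multi-class bookkeeping and the four separate events, at the (small) cost of having to set up the truncated processes and verify the frontier-growth domination; the paper's version avoids dependency-set language and deals only with concrete particle trajectories.
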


Proposition~\ref{p:DEC_ASEP} will be proved in Section~\ref{s:asep}.
Together with Theorems~\ref{t:maingeneral} and~\ref{t:largedrift}, it implies the following law of large numbers.

\begin{corollary}
\label{c:LLN_ASEP}
Suppose that
$\inf_{\xi \in \{0,1\}} \left\{\alpha(\xi)  - \beta(\xi) \right\} > 1$.
Then there exists a function $v:[0,1]\times[0,1]\to (1,\infty)$ such that, for every $p,\rho \in [0,1]$,
\begin{equation}\label{e:LLN_ASEP}
\lim_{t \to \infty} \frac{X_t}{t} = v(p,\rho) \qquad \PP^{p,\rho}\text{-almost surely.}
\end{equation}
\end{corollary}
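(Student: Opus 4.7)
The plan is to combine Proposition~\ref{p:DEC_ASEP} with Theorems~\ref{t:maingeneral} and~\ref{t:largedrift}, exploiting the unusual freedom in the choice of $v_\circ$ afforded by Proposition~\ref{p:DEC_ASEP}: the latter allows $v_\circ$ to be chosen arbitrarily close to $1$, while the hypothesis $\inf_\xi\{\alpha(\xi)-\beta(\xi)\}>1$ gives us exactly the slack needed to insert some $v_\circ$ between these two values.

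Concretely, I would first set
\[
\Delta \eqdef \inf_{\xi\in\{0,1\}}\{\alpha(\xi)-\beta(\xi)\} \; > \; 1
\]
and fix any $v_\circ\in(1,\Delta)$, together with some arbitrary auxiliary parameters $\gamma_\circ>1$ and $\kappa_\circ,C_\circ>0$ (say $\gamma_\circ=2$ and $\kappa_\circ=C_\circ=1$). Feeding these into Proposition~\ref{p:DEC_ASEP} yields constants $\usebigconstant{c:decoupling_2}$ and $\usebigconstant{c:decoupling_3}$, depending only on $(v_\circ,\gamma_\circ,\kappa_\circ,C_\circ)$ and crucially \emph{not} on $(p,\rho)$, so that the environment $\eta$ under $\PP^{p,\rho}$ satisfies Assumption~\eqref{e:DEC} with the same constants for every $p,\rho\in[0,1]$.

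With $v_\circ$ so fixed and $\Delta>v_\circ$, the hypotheses of Theorem~\ref{t:largedrift} hold. That theorem supplies Assumption~\eqref{e:BAL} with some $v_\star>v_\circ$ and $\gamma_\star=\gamma_\circ$, and thereby activates Theorem~\ref{t:maingeneral}. The conclusion of the main theorem delivers a speed $v=v(p,\rho)\geq v_\star>v_\circ>1$ such that $X_t/t\to v(p,\rho)$ almost surely under $\PP^{p,\rho}$, which is precisely \eqref{e:LLN_ASEP}. Since the chain of constants depends only on $(\Delta,v_\circ)$ and on the parameters produced by Proposition~\ref{p:DEC_ASEP} (themselves independent of $(p,\rho)$), the function $v$ is well defined on all of $[0,1]\times[0,1]$ with values in $(1,\infty)$.

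There is essentially no obstacle here: the corollary is a bookkeeping application of three earlier results. The only point one must check is the compatibility of parameters, namely that the admissible range $v_\circ>1$ in Proposition~\ref{p:DEC_ASEP} overlaps the range $v_\circ<\Delta$ required by Theorem~\ref{t:largedrift}. This overlap is guaranteed exactly by the large-drift hypothesis $\Delta>1$, and it is what makes the asymmetric exclusion case fit seamlessly into the general framework. The genuine work is hidden in the proof of Proposition~\ref{p:DEC_ASEP} (deferred to Section~\ref{s:asep}) and in that of the general Theorem~\ref{t:maingeneral}.
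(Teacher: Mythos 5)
Your proposal is correct and takes the same route as the paper, which simply states that the corollary follows from Theorems~\ref{t:maingeneral} and~\ref{t:largedrift} together with Proposition~\ref{p:DEC_ASEP}. You have merely spelled out the bookkeeping—in particular the observation that $v_\circ$ can be slipped into the gap $(1,\Delta)$—which is exactly what makes the cited results mesh.
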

\begin{proof}
Follows from Theorems~\ref{t:maingeneral} and~\ref{t:largedrift}, and Proposition~\ref{p:DEC_ASEP}.
\end{proof}

\subsection{Related works and discussion of the results}
\label{ss:discussion}

We give next a brief and by no means exhaustive historical overview of the model, focussing on works more closely related to our setting and followed by some remarks to help contextualize our results within the literature.

Random walks in random environments have been the subject of intense research activity in probability.
Early studies on the topic were motivated by applications ranging from biophysics \cite{chernov67} to crystal growth \cite{temkin69}.
For static one-dimensional environments, refined mathematical results have been obtained including recurrence-transience criteria, laws of large numbers, central limit theorems and large deviation principles (see for instance \cite{solomon1975, kesten75, sinai82, alili99}).

In higher dimensions, the understanding is still much less detailed despite many advances over the last decades (see \cite{zeitouni2004random, sznitman2004topics, drewitz14} and references therein).
Much progress has been made for i.i.d.\ environments in the so-called \emph{ballistic regime}: namely, ballisticity conditions (e.g.\ the $(T)_{\gamma}$ conditions) have been identified that imply laws of large numbers, Brownian scaling and large deviation estimates (cf.\ \cite{Sznitman99, Sznitman01, Sznitman02}).
See also~\cite{berger13} for refinements leading to effective polynomial ballisticity conditions. 
Ballistic random walks in random environments satisfying strong mixing conditions were studied in \cite{cz} and more recently in \cite{guerra22}.

For dynamic random environments, initial progress was made mostly for environments that are independent in either space or time (cf.\ \cite{madras1986process, boldrighini04, dolgopyat_liverani}). 
The work \cite{conemixing} considers random environments given by interacting particle systems under a milder mixing condition known as \emph{cone-mixing},
which is adapted from a condition introduced in the static case in \cite{cz} and allows for 
an approximate renewal structure to be employed in order to prove a law of large numbers for the random walk. 
Roughly, the cone-mixing condition requires the random environment inside far away cones to become approximately independent of the initial configuration,
\emph{uniformly} over the realizations of the latter. 
This excludes some natural examples of interacting particle systems, notably conservative ones such as the exclusion process and the zero-range process, but also non-conservative ones such as the supercritical contact process.
Laws of large numbers, central limit theorems and large deviation estimates have been later obtained for non-uniformly mixing random environments, for example the contact process~\cite{den2014scaling, mountford2015random}, the exclusion process~\cite{avena2015symmetric, avena2011transient, dos2014non, HKA19, huveneers2015random}, and independent random walks \cite{den2014random, hilario2015random}.
Regeneration arguments play an important role in several of these works, especially in connection to the central limit theorem.

In \cite{hilario2015random}, a ballisticity condition reminiscent of the $(T)_\gamma$ conditions from~\cite{Sznitman02} was introduced 
in order to study an environment composed of independent random walks. Therein it is shown that, on the one hand, the ballisticity assumption together with
a regeneration argument imply the law of large numbers as well as the central limit theorem and deviation estimates.
On the other hand, refined decoupling properties of the environment together with a renormalization argument are used to show that the ballisticity
condition holds in a perturbative regime of high density, in particular implying that the previous limit theorems hold in this regime as well.
The works \cite{BHSST19, BHSST20} extended this investigation to higher dimensions and to the low density regime. 
In \cite{huveneers2015random}, similar arguments were developed for the case when the environment is given by the symmetric simple exclusion process; there the ballistic behavior is obtained in perturbative regimes of large or small jump rates for the environment.

Note that systems of independent random walks as considered in \cite{hilario2015random} are special instances of the zero-range process,
which is one of our main examples here. 
However, the regeneration strategy developed in \cite{hilario2015random} does not directly extend to cases with interaction. 
Similarly, the regeneration structure used in \cite{huveneers2015random} relies on the representation of the symmetric exclusion process in terms of the interchange process, which does not apply to asymmetric exclusion. 
More broadly, regeneration arguments tend to rely (as far as we are aware) on special characteristics of the model at hand and are thus difficult to generalize. One of our main motivations for this work was to obtain a law of large numbers in greater generality without using regeneration, 
which is achieved here by exploiting the multiscale renormalization methods from \cite{blondel2020random}.

\subsection{Basic properties of the construction}
\label{ss:properties}

We collect here some additional properties of our construction in Section~\ref{ss:RW} that will be useful in the sequel.
The first is a straightforward consequence:

\begin{enumerate}
\item[] (Coalescence). $\;$ If $X^{(x_0, t_0)}_t=x$ then $X^{(x_0, t_0)}_{t+s} = X^{(x,t+t_0)}_s$ for all $s \geq 0$.
\end{enumerate}

Recall the setup of Section~\ref{ss:RW}. 
Denote by $\Pi_\Lambda$ the Poisson point process on $\LL$ defined as $\Pi_\Lambda = \Pi_{\hat{\alpha}}$ (recall \eqref{e:Pialpha}), where 
$\hat{\alpha}:E \to \R_+$ is constant and equal to $\Lambda$, i.e., $\hat{\alpha}(\xi) = \Lambda$ for all $\xi \in E$ . 
Then $\Pi_\Lambda$ is simply the projection of $\Pi$ onto $\mathbb{L}$.
Note that $\Pi_\Lambda$ is independent of $\eta$ and that $\Pi_\alpha \cup \Pi_\beta \subset \Pi_\Lambda$.

We state next a monotonicity property.
\begin{lemma}[Monotonicity]
\label{l:monot}
Let $\alpha_i, \beta_i:E\to \R_+$, $i=1,2$, be measurable functions satisfying 
$\alpha_i(\xi) + \beta_i(\xi) \leq \Lambda$, $\alpha_1(\xi) \geq \alpha_2(\xi)$
and $\beta_1(\xi) \leq \beta_2(\xi)$ for all $\xi \in E$.
The following holds $\PP$-almost surely:
For any $y_i = (x_i, s) \in \LL$, $i=1,2$, such that $x_1 \geq x_2$, 
denote by $X^{i}$ the analogue of $X^{y_i}$ constructed from $\alpha_i, \beta_i$
and the same realization of $\eta, \Pi$.
Then $X^{1}_t \geq X^{2}_t$ $\forall$ $t\geq 0$.
\end{lemma}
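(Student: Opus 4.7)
The plan is to carry out a pathwise induction on the Poisson marks of $\Pi$, exploiting the fact that both $X^1$ and $X^2$ are deterministic functions of the common data $(\eta,\Pi)$. Because the total jump rate is bounded by $\Lambda$, each walk performs only finitely many jumps in any bounded time interval, $\PP$-a.s. In particular, the ordered set of times in $(s,\infty)$ at which a mark of $\Pi$ sits at the current position of one of the two walks is locally finite $\PP$-a.s.; enumerate it as $s<t_1<t_2<\cdots$. Between consecutive $t_k$'s neither walk moves, so the inequality $X^1 \geq X^2$ propagates trivially, and it suffices to show that it is preserved across each $t_k$.

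Fix such a $t_k$ with associated mark $(z,t_k,u)\in\Pi$ and assume inductively that $X^1_{t_k^{-}}\geq X^2_{t_k^{-}}$. If $z$ differs from both current positions, neither walk is affected and the claim at $t_k$ is immediate. If $z$ coincides with exactly one of the two walks' positions, say $z=X^1_{t_k^{-}}\neq X^2_{t_k^{-}}$, then the strict inequality $X^1_{t_k^{-}}>X^2_{t_k^{-}}$ together with integer-valuedness forces $X^1_{t_k^{-}}\geq X^2_{t_k^{-}}+1$; since a single jump moves $X^1$ by at most $1$, one obtains $X^1_{t_k}\geq X^1_{t_k^{-}}-1\geq X^2_{t_k^{-}}=X^2_{t_k}$, and the symmetric sub-case is identical.

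The substantive case is $z=X^1_{t_k^{-}}=X^2_{t_k^{-}}$, in which both walks react to the same mark. Here I would run a case analysis on where $u$ sits relative to the four thresholds $\alpha_i(\eta_{t_k}(z))$ and $\alpha_i(\eta_{t_k}(z))+\beta_i(\eta_{t_k}(z))$, $i=1,2$, and invoke $\alpha_1\geq\alpha_2$ together with $\beta_1\leq\beta_2$ to check in each sub-case that the resulting pair of jumps (or absence thereof) preserves $X^1\geq X^2$: the nesting $[0,\alpha_2]\subseteq[0,\alpha_1]$ guarantees that every right-jump of $X^2$ is matched by a right-jump of $X^1$, while the condition $\beta_1\leq\beta_2$ is meant to ensure, sub-case by sub-case, that $X^1$ does not step left alone. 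I expect this case analysis to be the only genuinely delicate step and the place where the rate monotonicity is used in full; everything else, including the almost-sure local finiteness of $\{t_k\}$ and the induction itself, is routine bookkeeping.
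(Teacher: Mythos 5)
Your route is essentially the paper's, recast as a pathwise induction over mark times instead of the paper's nearest-neighbour intermediate-value argument; your treatment of the sub-case where the mark sits under only one walk is correct, and the induction framework is fine. The problem is exactly the step you deferred. The promised case analysis does \emph{not} close under the stated hypotheses: $\alpha_1\geq\alpha_2$ and $\beta_1\leq\beta_2$ do not force $\alpha_1+\beta_1\leq\alpha_2+\beta_2$, and whenever $\alpha_1(\xi)+\beta_1(\xi)>\alpha_2(\xi)+\beta_2(\xi)$ a mark with $u\in(\alpha_2(\xi)+\beta_2(\xi),\,\alpha_1(\xi)+\beta_1(\xi)]$ (a nonempty sub-interval of $[0,\Lambda]$) makes $X^1$ jump left while $X^2$ stays put, destroying the order. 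A concrete instance is $\alpha_1=10$, $\beta_1=1$, $\alpha_2=1$, $\beta_2=2$, $\Lambda=11$, $u=10.5$. So your reading that ``$\beta_1\leq\beta_2$ ensures $X^1$ does not step left alone'' is precisely what fails.

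For what it is worth, the same lacuna appears in the paper's own proof, which asserts $\Pi_{\beta_1}\subset\Pi_{\beta_2}$: under the coupling \eqref{e:Pialpha}--\eqref{e:Pibeta}, that inclusion requires $(\alpha_1(\xi),\alpha_1(\xi)+\beta_1(\xi)]\subset(\alpha_2(\xi),\alpha_2(\xi)+\beta_2(\xi)]$ for every $\xi$, i.e., the additional pointwise hypothesis $\alpha_1+\beta_1\leq\alpha_2+\beta_2$ (or $\beta_1\equiv 0$). In the paper, Lemma~\ref{l:monot} is only ever invoked with identical rate pairs, or with $\beta_1\equiv 0$, or with $(\alpha_2,\beta_2)\equiv(0,\Lambda)$, and in all of these the needed inclusion holds for degenerate reasons, so the downstream applications are safe. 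But a complete proof of the lemma as stated needs either the extra hypothesis on $\alpha_i+\beta_i$ or a modified coupling, for instance reading the left-jump marks off $[\Lambda-\beta,\Lambda]$ rather than $(\alpha,\alpha+\beta]$, which would make $\Pi_{\beta_1}\subset\Pi_{\beta_2}$ an immediate consequence of $\beta_1\leq\beta_2$. Deferring the case analysis hid exactly the place where this issue would have surfaced.
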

\begin{proof}
Note that $\left(\Pi_\Lambda \cap \{x\} \times \R_+\right)_{x \in \Z}$ are i.i.d.\ Poisson point processes on $\R_+$ with intensity $\Lambda \dd t$,
and thus almost surely any two of them have empty intersection.
On this event, because $X^i$ only performs nearest-neighbor jumps, 
if $X^1_{u} > X^2_u$ and $X^1_{v} < X^{2}_v$ with $u < v$ there must be a $w \in (u,v)$ such that $X^1_w=X^2_w$.
Thus, since $X^1_0 \geq X^2_0$, we only need to prove that, if $X_t^1 = X_t^2 = x$ for some $t \geq 0$, 
then $X_\tau^1 \geq X_\tau^2$ where $\tau = \inf\{s>t \colon X^1_s \neq x \text{ or } X^2_s \neq x \}$.
Since this is automatic if $X^2_\tau = x-1$, we may assume that $X^2_\tau \geq x$. The latter is equivalent to $(x,\tau) \notin \Pi_{\beta_2}$, 
and since $\Pi_{\beta_1} \subset \Pi_{\beta_2}$, $X^1_\tau \geq x$ as well. 
Finally, $X^{2}_\tau = x+1$ is equivalent to $(x,\tau) \in \Pi_{\alpha_2}$ and $\Pi_{\alpha_1} \supset \Pi_{\alpha_2}$, so in this case $X^1_\tau=x+1$ as well.
\end{proof}

In particular, Lemma~\ref{l:monot} implies:
\begin{enumerate}
\item[] (Monotonicity in $y$). $\;$ $\PP$-a.s., if $x_1 \geq x_2$ then $X^{(x_1, s)}_t \geq X^{(x_2, s)}_t$ for all $s,t\geq 0$.
\end{enumerate}

Another simple but useful consequence of Lemma~\ref{l:monot} is the following.
Define
\begin{equation}\label{d:lambda}
\lambda = 2\Lambda.
\end{equation}
\begin{lemma}[Bound on the increments]
\label{l:bdincr}
There exists on $(\Omega, \cF, \PP)$ a collection $(N^y)_{y \in \LL}$ of Poisson processes with rate $\lambda=2\Lambda$ such that
\begin{enumerate}
\item[a)] $\PP$-a.s., $\sup_{s \in [0,t]} |X^y_s| \leq N^y_t$ for all $t \geq 0$ and $y \in \LL$;
\item[b)] For each $s \geq 0$, the collection $(N^{(x,s)})_{x \in \Z}$ is independent of $\sigma(\eta, \Pi \cap \Z \times [0, s] \times [0,\Lambda])$.
\end{enumerate}
\end{lemma}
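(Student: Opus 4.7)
The plan is to dominate the displacement of $X^y$ via two extremal walks provided by the monotonicity lemma (Lemma~\ref{l:monot}), then to assemble $N^y$ from their sum, enlarging the probability space by independent Poisson processes to produce a bona fide Poisson process of rate $\lambda = 2\Lambda$.

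For each $y = (x_0, s_0) \in \LL$, I would apply Lemma~\ref{l:monot} to the extremal rate pairs $(\alpha_+, \beta_+) \equiv (\Lambda, 0)$ and $(\alpha_-, \beta_-) \equiv (0, \Lambda)$, using the same $\Pi$ as $X^y$, to obtain the always-right and always-left walks $X^{+, y}, X^{-, y}$ satisfying
\[
X^{-, y}_t \leq X^y_t \leq X^{+, y}_t \quad \forall\, t \geq 0.
\]
Since each of these extremal walks has total rate $\Lambda$ and jumps at every $\Pi_\Lambda$-event it encounters, the displacement processes
\[
A^y_t := X^{+, y}_t - x_0 \quad \text{and} \quad B^y_t := x_0 - X^{-, y}_t
\]
are each Poisson processes of rate $\Lambda$. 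Combining the sandwich with monotonicity of $A^y, B^y$ in $t$ gives
\[
\sup_{r \in [0, t]} |X^y_r - x_0| \;\leq\; A^y_t + B^y_t.
\]
A key geometric observation is that $X^{+, y}$ and $X^{-, y}$ share a common site only up to the first $\Pi_\Lambda$-event at $x_0$ (occurring after $s_0$); from that event onwards they occupy disjoint sets of sites forever, so their subsequent jumps are driven by disjoint independent restrictions of $\Pi_\Lambda$ and contribute together at total rate $2\Lambda$. To convert $A^y + B^y$ into an exact Poisson process of rate $2\Lambda$ (it has a spurious jump of size $2$ at the first common event), the plan is to enlarge $(\Omega, \cF, \PP)$ by an independent family of rate-$\Lambda$ Poisson processes $(\Xi^y)_{y \in \LL}$, mutually independent and independent of $\eta, \Pi$, used to supply the missing unit intensity during the initial interval and to split the double jump. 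This produces a bona fide Poisson process $N^y$ of rate $2\Lambda$ with $N^y_t \geq A^y_t + B^y_t$, establishing~(a).

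For~(b), the key point is that $X^{\pm, (x, s)}$, and consequently $N^{(x, s)}$ (together with the independent $\Xi$-processes), is measurable with respect to $\sigma\bigl(\Pi \cap \Z \times [s, \infty) \times [0, \Lambda]\bigr)$, involving no dependence on $\eta$. By independent increments of $\Pi$, this sigma-algebra is independent of $\sigma(\Pi \cap \Z \times [0, s] \times [0, \Lambda])$, and by construction (see Section~\ref{ss:RW}) the process $\Pi$ is independent of $\eta$. Combined with the independence of the $\Xi^y$'s, this yields the joint independence of $(N^{(x, s)})_{x \in \Z}$ from $\sigma(\eta, \Pi \cap \Z \times [0, s] \times [0, \Lambda])$ required by~(b).

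The main obstacle is producing $N^y$ as an \emph{exact} Poisson process of rate $2\Lambda$ rather than merely a counting process dominated by one: the raw sum $A^y + B^y$ fails to be Poisson because of the double jump at the first $\Pi_\Lambda$-event at $x_0$, so the auxiliary enlargement is the one nontrivial technical point. Apart from this bookkeeping, the argument is a direct consequence of monotonicity (Lemma~\ref{l:monot}) and the independence properties of the Poisson point process $\Pi$.
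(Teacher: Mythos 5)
Your starting point is identical to the paper's: sandwich $X^y$ between the extremal walks $Z^{y,+}$ (always right) and $Z^{y,-}$ (always left) built via Lemma~\ref{l:monot} from the same Poisson field $\Pi$, and assemble $N^y$ from the displacements $A^y_t = Z^{y,+}_t - x_0$ and $B^y_t = x_0 - Z^{y,-}_t$. You make an observation the paper's proof overlooks: both $Z^{y,+}$ and $Z^{y,-}$ make their first jump at the very same $\Pi_\Lambda$-event at $x_0$ after $s_0$, so $A^y$ and $B^y$ are not independent and $A^y + B^y$ has a jump of size $2$ at that common time $\tau_1$; the paper simply asserts independence of $\pm Z^{y,\pm}$, which is not correct as written. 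This diagnosis is a genuine contribution.

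However, the fix you propose cannot work: there is no rate-$2\Lambda$ Poisson process $N^y$ satisfying $N^y_t \geq A^y_t + B^y_t$ almost surely for all $t$, no matter how much independent auxiliary randomness is added. Indeed $\PP[A^y_t + B^y_t \geq 2] = \PP[\tau_1 \leq t] = 1 - e^{-\Lambda t} \sim \Lambda t$ as $t \downarrow 0$, whereas for any Poisson process $\PP[N^y_t \geq 2] = O(t^2)$, so the required domination fails at small $t$. The obstruction comes from passing from the sandwich to the sum, which is wasteful: the sandwich together with monotonicity of $A^y, B^y$ actually gives $\sup_{r \le t}|X^y_r - \pi_1(y)| \leq \max(A^y_t, B^y_t)$, and the max jumps by only $1$ at $\tau_1$. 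One checks $\max(A^y, B^y) \leq A^y + B^y - \1_{\{\tau_1 \leq \cdot\}}$, a genuine counting process whose predictable intensity is $\Lambda$ on $[0,\tau_1]$ and $2\Lambda$ thereafter (after $\tau_1$ the two extremal walks occupy disjoint sets of sites and so have independent increments). Superposing an independent rate-$\Lambda$ Poisson process $\Xi^y$ restricted to $[0,\tau_1]$ then produces a bona fide rate-$2\Lambda$ Poisson process $N^y$ dominating the max, measurable in $\sigma\bigl(\Xi^y, \Pi \cap \Z \times (s,\infty) \times [0,\Lambda]\bigr)$, which also gives~(b). So your idea of enlarging the probability space with independent Poisson noise is the right one, but it must be applied to $\max(A^y,B^y)$, not to $A^y + B^y$.
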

\begin{proof}
Denote by $Z^{y, +}$, $Z^{y, -}$ the analogues of $X^y$ for $(\alpha, \beta) \equiv (\Lambda, 0)$ and $(\alpha, \beta) \equiv (0, \Lambda)$, respectively.
Note that $\pm Z^{y,\pm}$ are independent Poisson processes with rate $\Lambda$ and, by Lemma~\ref{l:monot}, $\PP$-a.s.\ $Z^{y,-}_t \leq Z^{y,-}_s \leq X^y_s \leq Z^{y,+}_s \leq Z^{y,+}_t$ for any $0 \leq s \leq t$.
To conclude, note that $N^y_t = Z^{y,+}_t - Z^{y,-}_t$ satisfies the requirements of the statement since
$(N^{(x,s)})_{x \in \Z}$ is measurable in $\sigma(\Pi \cap \Z \times (s,\infty) \times [0,\Lambda])$.
\end{proof}

It will be convenient to extend the decoupling inequality \eqref{e:DEC} to random variables in the enlarged probability space.
To this end, we redefine next the notion of support.
\begin{definition}
\label{d:support}
Let $R \subset \R^2$.
We say that a random variable on $(\Omega, \cF, \PP)$ is \emph{supported on $R$} if 
it is measurable in $\sigma\big( (\eta_t(x))_{(x,t) \in R \cap \LL}, \Pi \cap (R \times [0,\Lambda])\big)$.
\end{definition}

For future reference, we state here the following.
\begin{remark}
\label{r:ext_decouple}
Since $\Pi$ is independent of $\eta$ and also independent in disjoint space-time regions,
if the Decoupling Assumption \eqref{e:DEC} holds then it remains valid after enlarging the probability space and redefining the notion of support as in Definition~\ref{d:support}.
\end{remark}

\section{Proof overview}
\label{s:proofoverview}

\par In this section we define key objects that will help us implement our renormalization scheme and state as intermediate results the main ingredients in the proof of Theorem~\ref{t:maingeneral}. Henceforth we assume that the hypotheses of the latter are in force.

Recall $\lambda = 2 \Lambda$ where $\Lambda$ is as in \eqref{e:BR}.
By Lemma~\ref{l:bdincr}, this will help us bound the number jumps of the random walk in a given time interval.

We define next a key event for our analysis. 
For $H \geq 1$, $v \in \R$ and $w \in \R \times \R_+$, let
\begin{equation}
\label{e:defA_H}
  A_{H,w}(v)  := \Big[ \exists \, y \in I_H(w) \cap \LL : X^y_{H} - \pi_1(y) \geq v H \Big]
\end{equation}
where
\begin{equation}\label{e:defI_H}
I_H(w) = w+[0,\lambda H)\times\{0\}.
\end{equation}

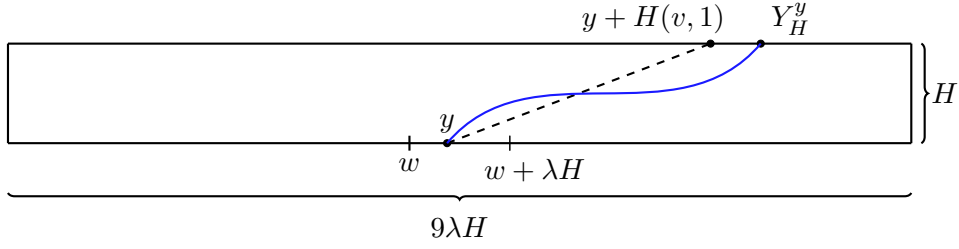
\begin{figure}
\begin{center}
\begin{tikzpicture}[scale=0.66]


\node[below] at (8,-0.1) {$w$};
\node[below] at (10.5,-0.1) {$w+\lambda H$};
\node[above] at (8.75,0) {$y$};
\node[left] at (14.5,2.5) {$y+H(v,1)$};
\node[right] at (15,2.5) {$Y^y_H$};

\draw[-,thick] (8,-0.15)--(8,0.15);
\filldraw (10,-0.15)--(10,0.15);
\filldraw (8.75,0) circle (2pt);
\filldraw (14,2) circle (2pt);
\filldraw (15,2) circle (2pt);

\draw[-, thick]  (0,0) -- (0,2);
\draw[-, thick]  (0,0) -- (18,0);
\draw[-, thick]  (18,0) -- (18,2);
\draw[-, thick]  (0,2) -- (18,2);
\draw [-, thick, dashed]  (8.75,0) -- (14,2);
\draw [blue!90!] [-,thick] (8.75,0) to[out=50, in=230] (15,2);
\draw [thick,decoration={brace,mirror},decorate] (0,-1) -- (18,-1) node[midway,below,yshift=-.17cm] {$9\lambda H$};
\draw [thick,decoration={brace,mirror},decorate] (18.2,0) -- (18.2,2);
\node[right] at (18.2,1) {$H$};
\end{tikzpicture}
\caption{\small{Starting from the point $y\in I_H(w) \cap \LL$, the random walk (blue) attains an average speed larger than $v$ during the time interval $[0,H]$.}}
\label{fig:key_event}
\end{center}
\end{figure}

In words, $A_{H,w}(v)$ is the event where it is possible to find at least one starting position $y$ inside a given space interval of length $\lambda H$ such that the random walk $X^{y}$ attains average speed at least $v$ over a time interval of length $H$. See Figure~\ref{fig:key_event} for an illustration.

The probability of $A_{H,w}(v)$ may depend on the reference point $w$. 
In fact, if $w$ does not belong to $\LL$, the cardinalities of $I_H(w) \cap\mathbb{L}$ and $I_H(0) \cap\mathbb{L}$ may differ by at most two.
In order to get a quantity that does not depend on $w$, we define
\begin{equation} 
\label{p_H}
  p_H(v) :=\sup_{w\in \R \times \R_+}  \PP \big[A_{H,w}(v) \big]= \sup_{w \in [0,1) \times \{0\}}  \PP \big[A_{H,w}(v) \big],
\end{equation}
where the second equality follows by translation invariance.  

These definitions allow us to set
\begin{equation}
\label{def_v_+}
v_+ := \inf \big\{ v \in \mathbb{R} \colon \liminf_{H \to \infty} p_H (v) = 0 \big\},
\end{equation}
which we call the {\it upper speed} of $X$.

Similarly, we define, for $w \in \R \times \R_+$, $v \in \R$ and $H \geq 1$, the event
\begin{equation*}
  \tilde{A}_{H,w}(v):=\Big[ \exists \, y \in I_H(w) \cap \mathbb{L} : X^y_{H} - \pi_1(y) \leq v H \Big],
\end{equation*}
as well as
\begin{equation*}
\tilde{p}_H(v) :=  \sup_{w\in\R \times \R_+ }\PP \big[ \tilde{A}_{H,w}(v) \big]=\sup_{w \in [0,1) \times \{0\} }\PP \big[\tilde{A}_{H,w}(v) \big]
\end{equation*}
and the {\it lower speed} of $X$ 
\begin{equation}\label{def_v_-}
v_- := \sup \big\{ v \in \mathbb{R} \colon \liminf_{H \to \infty} \tilde{p}_H(v) = 0 \big\}.
\end{equation}

The next result provides crude bounds on $v_{+}$ and $v_{-}$,
which are intuitive by Assumption~\eqref{e:BAL} and Lemma~\ref{l:bdincr}.
Its proof will be given at the end of this section.
 
\begin{proposition}\label{p:bounds_to_vs}
$v_{-}, v_{+} \in [v_{\star}, \lambda]$.
\end{proposition}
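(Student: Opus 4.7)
The plan is to establish the four inequalities $v_\star \leq v_\pm$ and $v_\pm \leq \lambda$ separately, by combining the ballisticity hypothesis~\eqref{e:BAL} (for the lower bounds) with Lemma~\ref{l:bdincr} and a standard Chernoff tail estimate for a Poisson variable of mean $\lambda H$ (for the upper bounds). Two preliminary observations simplify the bookkeeping: first, for any $w \in \R \times \R_+$ and $H \geq 1$, the set $I_H(w) \cap \LL$ is non-empty because $\lambda = 2\Lambda \geq 2$ forces the interval $[\pi_1(w), \pi_1(w)+\lambda H)$ to contain at least one integer; second, by translation invariance of $\eta$ and $\Pi$, the law of $X^y_H - \pi_1(y)$ does not depend on $y \in \LL$ and coincides with that of $X_H$.

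For the lower bounds I would fix $v < v_\star$. For $v_+$, it suffices to keep a single $y \in I_H(w) \cap \LL$: the observation above and~\eqref{e:BAL} give
\[
\PP[A_{H,w}(v)] \,\geq\, \PP[X_H \geq vH] \,\geq\, 1 - \PP[X_H \leq v_\star H] \,\geq\, 1 - C_\star e^{-\kappa_\star (\log^+ H)^{\gamma_\star}},
\]
so $\liminf_{H \to \infty} p_H(v) = 1$ for every $v < v_\star$, ruling out such $v$ from the defining set of $v_+$. Hence $v_+ \geq v_\star$. For $v_-$, the same hypothesis combined with a union bound over the at most $\lambda H + 1$ elements of $I_H(w) \cap \LL$ gives
\[
\tilde p_H(v) \,\leq\, (\lambda H + 1) \cdot \PP[X_H \leq vH] \,\leq\, (\lambda H + 1)\, C_\star\, e^{-\kappa_\star (\log^+ H)^{\gamma_\star}},
\]
which tends to $0$ as $H \to \infty$ because $\gamma_\star > 1$ makes the stretched-exponential term dominate the linear factor. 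Hence $\liminf_{H \to \infty} \tilde p_H(v) = 0$, which yields $v_- \geq v_\star$.

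For the upper bounds, Lemma~\ref{l:bdincr} supplies Poisson variables $N^y_H$ of mean $\lambda H$ with $|X^y_H - \pi_1(y)| \leq N^y_H$ almost surely. A standard Chernoff estimate for the Poisson distribution yields $\PP[N^y_H \geq vH] \leq e^{-c H}$ with $c = c(v) > 0$ whenever $v > \lambda$. A union bound then gives $p_H(v) \leq (\lambda H + 1)\, e^{-c H} \to 0$, so $v_+ \leq v$ for every $v > \lambda$, whence $v_+ \leq \lambda$. For $v_-$, picking one $y \in I_H(w) \cap \LL$ and using the inclusion $\{N^y_H \leq vH\} \subset \{X^y_H - \pi_1(y) \leq vH\}$ yields $\tilde p_H(v) \geq 1 - e^{-c H} \to 1$, so $\liminf \tilde p_H(v) > 0$ for every $v > \lambda$, giving $v_- \leq \lambda$.

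I do not expect any substantial obstacle here; the only delicate point is that the linear factor $\lambda H$ produced by the union bound for $\tilde p_H$ must be absorbed by the tail in~\eqref{e:BAL}, which is possible precisely because the exponent $\gamma_\star$ is strictly greater than $1$.
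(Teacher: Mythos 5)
Your proof is correct and follows essentially the same route as the paper: the lower bounds on $v_\pm$ come from Assumption~\eqref{e:BAL} (a union bound over $I_H(w) \cap \LL$ for $v_-$, a single starting point for $v_+$), and the upper bounds come from the Poisson domination in Lemma~\ref{l:bdincr} together with a Chernoff estimate (a union bound for $v_+$, a single starting point for $v_-$). The only cosmetic difference is that you argue for all $v<v_\star$ and all $v>\lambda$ while the paper fixes $v_\star$ and $\lambda+\varepsilon$ directly, but the implicit monotonicity of $p_H,\tilde p_H$ in $v$ makes these equivalent.
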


The proof of Theorem~\ref{t:maingeneral} will be done in two steps. 
The first is to obtain large deviation bounds involving speeds above $v_+$ or below $v_-$, as stated next.
Recall the definition of $\kappa, \gamma$ in \eqref{e:defgammakappa}.

\newconstant{c:c_accommodate}

\begin{proposition}
\label{prop:decay_v+-}
For every $\varepsilon > 0$, there exists a positive constant $\useconstant{c:c_accommodate}= \useconstant{c:c_accommodate}(\varepsilon)$ 
such that
  \begin{equation}
  \label{e:decay_v+-}
    \begin{split}
      p_H(v_+ + \varepsilon) \leq \useconstant{c:c_accommodate}e^{-3\kappa \log^{\gamma}H} \quad \text{ and } \quad \tilde{p}_H (v_- - \varepsilon) \leq \useconstant{c:c_accommodate}e^{-3\kappa \log^{\gamma}H}
    \end{split}
  \end{equation}
 for all  $H \geq 1$.
  \end{proposition}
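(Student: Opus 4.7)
The plan is to prove the first bound of \eqref{e:decay_v+-} via a multiscale renormalization in the spirit of \cite{blondel2020random,HKA19}; the bound on $\tilde p_H(v_--\varepsilon)$ then follows by a symmetric argument (reversing the roles of $\alpha,\beta$ and of $v_+,v_-$). First I set up a geometric sequence of scales $H_{k+1} = \lceil H_k^\beta \rceil$ with $\beta = 2^{1/\gamma} > 1$, so that $(\log H_{k+1})^\gamma \approx 2(\log H_k)^\gamma$, together with a strictly increasing sequence of speeds $v_+ < v_0 < v_1 < \cdots \uparrow v_\infty \le v_+ + \varepsilon$ with $v_\infty - v_0 \le \varepsilon/2$ and increments $v_{k+1}-v_k$ that are summable but decay only polynomially in $k$. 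By the very definition \eqref{def_v_+} of $v_+$, one can pick $H_0$ so large that $p_{H_0}(v_0) \le \delta$ for any prescribed $\delta>0$, which will serve as the base of the induction.

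The core step is a \emph{cascade lemma}: the event $A_{H_{k+1},w}(v_{k+1})$ forces, up to an exceptional event whose probability decays at least as $e^{-\kappa_\star(\log H_{k+1})^{\gamma_\star}}$ (controlled via Lemma~\ref{l:bdincr} and \eqref{e:BAL}), the simultaneous occurrence of two events $A_{H_k,w_1}(v_k)$ and $A_{H_k,w_2}(v_k)$ whose supports satisfy the geometric condition \eqref{condition_to_decouple}. The mechanism is pigeonhole: partition $[0,H_{k+1}]$ into $\ell_k := H_{k+1}/H_k$ consecutive slabs of length $H_k$; with overwhelming probability each slab carries displacement at most $2\lambda H_k$, so if at most one slab achieved average speed $\ge v_k$ the total displacement would be bounded by $(2\lambda + (\ell_k-1)v_k)H_k$, which falls short of $v_{k+1}H_{k+1}$ as soon as $\ell_k \ge (2\lambda-v_k)/(v_{k+1}-v_k)$, automatic for $\beta$ large. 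A slight refinement — partitioning the $\ell_k$ slabs into $M$ consecutive blocks and repeating the count — guarantees two successful slabs $j_1<j_2$ in distinct blocks, so $j_2-j_1 \ge \ell_k/M$. Since the resulting starting points $w_i = (X^y_{j_i H_k}, j_i H_k)$ are vertically separated by $\ge (\ell_k/M - 1)H_k$ and horizontally by $\ge v_k(\ell_k/M)H_k - O(H_k)$, and since $v_k > v_+ \ge v_\star > v_\circ$, \eqref{condition_to_decouple} holds for large $\ell_k$.

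Applying the extended decoupling inequality \eqref{e:DEC} (Remark~\ref{r:ext_decouple}) together with a union bound over the polynomially-in-$H_k$ many possible pairs of starting positions $(w_1,w_2)$ — the number of which is controlled via Lemma~\ref{l:bdincr} — yields the renormalization recursion
\begin{equation*}
p_{H_{k+1}}(v_{k+1}) \;\le\; H_k^{C_1}\, p_{H_k}(v_k)^2 \;+\; C' e^{-9\kappa(\log H_{k+1})^\gamma},
\end{equation*}
where the second term absorbs both the decoupling error and the Poisson/ballisticity exceptional events, using the definition \eqref{e:defgammakappa} of $\kappa,\gamma$ and the inequalities $\kappa_\circ,\kappa_\star \ge 9\kappa$ and $\gamma_\circ,\gamma_\star \ge \gamma$. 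Because $(\log H_{k+1})^\gamma \ge 2(\log H_k)^\gamma - o(1)$, the squaring $p_{H_k}^2$ easily absorbs the polynomial prefactor $H_k^{C_1}$, and a direct induction from the base estimate $p_{H_0}(v_0) \le \delta$ produces $p_{H_k}(v_\infty) \le e^{-6\kappa(\log H_k)^\gamma}$ for every $k$.

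Finally, to pass from the scales $H_k$ to arbitrary $H \ge 1$, I argue that for $H \in [H_k,H_{k+1}]$ the event $A_{H,w}(v_+ + \varepsilon)$ implies, on a high-probability event, $A_{H_k,w'}(v_\infty)$ for some nearby $w'$ (a walk reaching average speed $v_++\varepsilon$ over $H$ must also, on its first sub-interval of length $H_k$, reach speed $v_\infty$, by the same truncation-and-pigeonhole argument used in the cascade); the factor $(\log H/\log H_k)^\gamma \le \beta^\gamma = 2$ turns the exponent $6\kappa$ into $3\kappa$ as in \eqref{e:decay_v+-}. For $H < H_0$ the bound is trivial on choosing $\useconstant{c:c_accommodate}$ large. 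I expect the principal technical obstacle to be the cascade lemma, specifically the \emph{simultaneous} control of vertical and horizontal separation between the two sub-events while keeping the truncation error on the walk's displacement negligible compared with the target decay $e^{-6\kappa(\log H_k)^\gamma}$; the tension is that $\ell_k$ must be large enough both to run the pigeonhole and to dominate the additive $O(H_k)$ slack in \eqref{condition_to_decouple}, while the union bound over starting positions of $w_1,w_2$ remains polynomial.
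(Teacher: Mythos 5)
Your overall architecture matches the paper's: a multiscale renormalization built on a cascade lemma (two bad sub-events in well-separated boxes, controlled via Lemma~\ref{l:bdincr} and the ballisticity assumption), a squaring recursion, an inductive conclusion along a scale sequence, and a final interpolation. The secondary ingredients (Poisson control of the walk's range, decoupling error on the scale of the sub-boxes, polynomial union-bound prefactor) are also all present. However, there are two genuine gaps.

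The first, and fatal, gap is the choice of scale growth $\beta = 2^{1/\gamma}$. With this choice $(\log H_{k+1})^\gamma \approx 2(\log H_k)^\gamma$, so the squared term in the recursion produces $p_{H_k}^2 \le e^{-12\kappa(\log H_k)^\gamma}$, which is already \emph{exactly} the target $e^{-6\kappa(\log H_{k+1})^\gamma} \approx e^{-12\kappa(\log H_k)^\gamma}$ with no slack at all. The polynomial prefactor $H_k^{C_1} = e^{C_1 \log H_k}$ therefore cannot be absorbed: you would need $C_1\log H_k \le (12\kappa - 6\kappa\beta^\gamma)(\log H_k)^\gamma$, and the coefficient on the right is zero. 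You need the scale growth exponent to be strictly less than $2^{1/\gamma}$, so that $6\kappa\beta^\gamma < 12\kappa$ leaves a positive margin $c(\log H_k)^\gamma$ (with $\gamma > 1$) that eventually dominates $C_1\log H_k$. This is precisely why the paper fixes $\nu$ small via the condition $6(1+\nu)^{3\gamma}\le 7$ in~\eqref{e:choice_nu} and uses $L_{k+1}\approx L_k^{1+\nu}$, ensuring $(1+\nu)^\gamma$ stays well below $2$.

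The second gap is in the separation computation. You claim that the horizontal distance between the two selected sub-box origins $w_1 = (X^y_{j_1H_k}, j_1H_k)$ and $w_2 = (X^y_{j_2H_k}, j_2H_k)$ is at least $v_k(\ell_k/M)H_k - O(H_k)$, but nothing in your argument controls the walk's trajectory \emph{between} the two fast slabs: the walk could move slowly, or even retreat, so that $X^y_{j_2H_k} - X^y_{j_1H_k}$ has no a priori lower bound. The paper deals with this by restricting to the event $\bar{D}_m$~\eqref{e:min_displ_Bm}, which uses~\eqref{e:BAL} to force average speed $\ge v_\star$ on \emph{every} slab; the horizontal separation is then $\ge v_\star(\dist_V + H_k)$, and since $v_\star > v_\circ$ the surplus $\delta_\star := v_\star - v_\circ$ times the number of intervening slabs eventually dominates the additive terms $\usebigconstant{c:decoupling_2} H_k + \usebigconstant{c:decoupling_3}$. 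You mention the exceptional event controlled via~\eqref{e:BAL}, but it needs to be brought into the separation inequality explicitly, and the rate there should be $v_\star$, not $v_k$.

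A minor further remark: the interpolation step, as phrased, is not quite right — a walk fast over $[0,H]$ need not be fast over the \emph{first} $H_k$-block; you need a pigeonhole/covering over all $O(H/H_k)$ sub-blocks, which is what the paper does via the events $\mathcal{E}_1,\mathcal{E}_2$ in Section~\ref{ss:interp}. Moreover, here again the zero slack from $\beta^\gamma = 2$ would break the absorption of the polynomial covering factor. Finally, your replacement of the paper's scaling parameter $h$ (the ``trigger'') by choosing $H_0$ large is a legitimate alternative bookkeeping, and the $N+1$-slab selection in the paper versus your two-slab-in-distinct-blocks refinement are variants of the same pigeonhole idea; neither of those differences is problematic.
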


The main tool for the proof of Proposition~\ref{prop:decay_v+-} is a multiscale renormalization scheme 
that allows us to first conclude \eqref{e:decay_v+-} along a specific subsequence of $H$.
An interpolation argument then yields the result for $H\geq 1$. 

Although it is intuitive to expect that $v_- \leq v_+$, we are unable to prove this directly from the definitions.
It is however a consequence of Proposition~\ref{prop:decay_v+-}:
\begin{corollary}\label{cv+mv-}
$v_- \leq v_+$.
\end{corollary}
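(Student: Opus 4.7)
The plan is to deduce the corollary by contradiction directly from Proposition \ref{prop:decay_v+-}, exploiting the elementary observation that ``every walker in $I_H(w)\cap\LL$ is slow'' and ``every walker in $I_H(w)\cap\LL$ is fast'' cannot hold simultaneously. Assuming toward a contradiction that $v_+ < v_-$, I would pick $\varepsilon>0$ small enough that $v_+ + \varepsilon < v_- - \varepsilon$ and fix any reference point $w\in\R\times\R_+$. Unpacking the two events, if $A_{H,w}(v_+ + \varepsilon)$ does not occur then $X^y_H - \pi_1(y) < (v_+ + \varepsilon)H$ for every $y\in I_H(w)\cap\LL$, whereas if $\tilde{A}_{H,w}(v_- - \varepsilon)$ does not occur then $X^y_H - \pi_1(y) > (v_- - \varepsilon)H$ for every such $y$; these two inequalities are inconsistent as soon as $I_H(w)\cap\LL$ contains at least one element.

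Since $\lambda = 2\Lambda \geq 2$, the horizontal segment $I_H(w) = w+[0,\lambda H)\times\{0\}$ has length $\lambda H \geq 2$ for every $H\geq 1$, so it always contains at least one integer abscissa, and hence $I_H(w)\cap\LL\neq \emptyset$. Combining the two observations,
\[
\PP\bigl[A_{H,w}(v_+ + \varepsilon)\bigr] + \PP\bigl[\tilde{A}_{H,w}(v_- - \varepsilon)\bigr] \;\geq\; \PP\bigl[A_{H,w}(v_+ + \varepsilon)\cup \tilde{A}_{H,w}(v_- - \varepsilon)\bigr] \;=\; 1
\]
for every $H\geq 1$, and passing to the supremum over $w$ yields $p_H(v_+ + \varepsilon) + \tilde{p}_H(v_- - \varepsilon) \geq 1$. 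On the other hand, Proposition \ref{prop:decay_v+-} forces both terms on the left to decay at a stretched-exponential rate in $H$ and hence to vanish as $H\to\infty$; this is the desired contradiction.

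I do not anticipate any real obstacle: the content of the corollary is already concentrated in Proposition \ref{prop:decay_v+-}, and the only additional inputs are the deterministic incompatibility highlighted above and the trivial observation that $I_H(w)\cap\LL$ is nonempty for every $H\geq 1$, both of which are immediate from the definitions.
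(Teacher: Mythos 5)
Your proof is correct and takes essentially the same route as the paper's: the paper also proceeds by contradiction from the elementary inequality $p_H(v_1)+\tilde p_H(v_2)\ge 1$ for $v_1<v_2$, combined with Proposition~\ref{prop:decay_v+-}. The only difference is that the paper states this inequality without further comment, whereas you supply the explicit justification (non-emptiness of $I_H(w)\cap\LL$ plus the deterministic incompatibility of the two complementary events).
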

The proof of Corollary~\ref{cv+mv-} can be found in \cite[Corollary~3.3]{HKA19}.
For completeness, we include it after the proof of 
Proposition~\ref{prop:decay_v+-} in Section~\ref{ss:interp}.

The second step in the proof of Theorem~\ref{t:maingeneral} is to show that $v_-$ and $v_+$ coincide. 
Their common value $v$ is the candidate for the speed in~\eqref{e:genLLN}.

\begin{proposition}\label{v+=v-}
$v_{+}=v_{-}$.
\end{proposition}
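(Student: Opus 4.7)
We have $v_-\leq v_+$ from Corollary~\ref{cv+mv-}, so it suffices to prove the reverse inequality $v_+\leq v_-$. I would argue by contradiction: suppose $v_-<v_+$ and pick $v\in(v_-,v_+)$ together with a small $\varepsilon>0$. Proposition~\ref{prop:decay_v+-} provides the strong upper bound
\[
p_H(v_++\varepsilon)\leq \useconstant{c:c_accommodate}\,e^{-3\kappa\log^\gamma H}\qquad\text{for all }H\geq 1,
\]
while, since $v<v_+$, the definition~\eqref{def_v_+} yields $\liminf_H p_H(v)>0$; hence along some subsequence $H_n\to\infty$ there exists $\delta>0$ with $p_{H_n}(v)\geq\delta$. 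In words, with probability at least $\delta$ some walker starting in $I_{H_n}(w)$ attains displacement at least $vH_n$ by time $H_n$.

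The strategy is to amplify this lower bound at many laterally separated space-time regions---using the decoupling inequality~\eqref{e:DEC} and the monotonicity of the construction (Lemma~\ref{l:monot})---in order to produce at a coarser scale $H'=kH_n$ an event witnessing $p_{H'}(v_++\varepsilon)$ above the upper bound above, yielding the contradiction. Concretely, I would arrange $k$ windows $I_{H_n}(w_j)$ at time $0$ with horizontal spacings $|w_{j+1}-w_j|$ strictly larger than $v_\circ H_n+\usebigconstant{c:decoupling_2}H_n+\usebigconstant{c:decoupling_3}$, so that~\eqref{condition_to_decouple} is met and the fast events $A_{H_n,w_j}(v)$ are approximately independent via~\eqref{e:DEC} with cumulative error of order $k\useconstant{c:c_accommodate}e^{-\kappa_\circ\log^{\gamma_\circ}H_n}$. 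The joint occurrence of all $k$ of these events then has probability at least $\delta^k$ minus this accumulated error. On this event, monotonicity and coalescence would let us identify a single walker $X^y$ with $y\in I_{H'}((0,0))$ whose trajectory passes through successive fast stages, accumulating displacement at least $kvH_n$ plus the sum of the horizontal spacings (which are themselves ``traveled'' through the chain via coalescence). Choosing the spacings large enough so that the effective speed exceeds $v_++\varepsilon$, and $k\sim(\log H_n)^{\gamma-1}$ so that $(\delta/2)^k$ is still much larger than $\useconstant{c:c_accommodate}e^{-3\kappa\log^\gamma H'}$, would yield the contradiction for large $H_n$.

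The main obstacle is making the chaining rigorous: the fast event in each window does not single out which walker is fast, only the existence of one, so realizing the $k$ fast events as consecutive segments of a single trajectory $X^y$ demands a careful multiscale geometric construction combined with repeated use of monotonicity. The condition $v>v_-\geq v_\star>v_\circ$ from Proposition~\ref{p:bounds_to_vs} is essential here, as it guarantees that the decoupling requirement~\eqref{condition_to_decouple} is compatible with the walker's own ballistic speed---the walker can cover the mandatory lateral spacings between windows within the available time. The delicate balance between (i) spacings large enough for decoupling to apply, (ii) spacings small enough to be traversable by a walker of speed bounded above by $v_++\varepsilon$, and (iii) the number of stages $k$ needed to beat the target decay rate, is the technical crux and mirrors the multiscale renormalization underlying Proposition~\ref{prop:decay_v+-}, only applied in reverse: amplifying a positive lower bound on $p_H$ rather than sharpening a small upper bound.
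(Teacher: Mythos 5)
Your plan does not close the argument, and the gap is structural rather than technical. You aim to chain $k$ windows, each witnessing a walker of speed at least $v$ over time $H_n$, to manufacture a coarse-scale event of speed at least $v_++\varepsilon$ in time $kH_n$ and contradict Proposition~\ref{prop:decay_v+-}. But chaining events of speed $v<v_+$ cannot produce a walker of speed exceeding $v_+$. If you place the windows at consecutive time slices $0,H_n,\dots,(k-1)H_n$ so that a single trajectory can pass through them in sequence, then the ``horizontal spacings'' between consecutive windows are traversed \emph{during} those time intervals and are already counted in the segment displacements $\geq vH_n$; they are not additive surplus. The chained trajectory has total displacement on the order of $kvH_n$ over time $kH_n$, i.e., average speed about $v$, and the event you have built is a witness for $p_{kH_n}(v')$ with $v'\approx v<v_+$, which carries no information against $p_{kH_n}(v_++\varepsilon)$. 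Alternatively, if you keep the windows all at time $0$ (as your text literally says), no single trajectory visits more than one of them and there is nothing to chain. Either way the contradiction you announce never materializes.

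Compounding this, the decoupling inequality~\eqref{e:DEC} imposes a lateral spacing of at least $v_\circ\,\dist_V+\usebigconstant{c:decoupling_2}\,s+\usebigconstant{c:decoupling_3}$, where $s$ is the \emph{height} of the relevant boxes. If the windows sit at consecutive times, $\dist_V=0$ and $s=H_n$, so the spacing must be at least $\usebigconstant{c:decoupling_2} H_n$, while a walker's displacement over time $H_n$ is at most $\lambda H_n$. Nothing forces $\usebigconstant{c:decoupling_2}\leq\lambda$, so the required spacing may be geometrically unreachable for the walker. The reason the paper's cascading lemma (Lemma~\ref{l:cascading_boxes}) succeeds is precisely that it takes $s=hL_k$ small compared to $\dist_V$ at the next scale, making the $\usebigconstant{c:decoupling_2} s$ term negligible; a one-scale chaining of adjacent boxes of the same height does not enjoy this.

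The paper's actual proof proceeds in the opposite direction: it uses $v_-<v_+$ to produce \emph{traps}, i.e., space-time points near which some walker goes slowly at speed at most $v_-+\theta$, and shows via Lemma~\ref{sometrapped} that these occur with uniformly positive probability, hence via Lemma~\ref{threatenedpoints} that ``threatened'' points (those with a trap on their forward $v_+$-ray) occur with overwhelming probability. Lemma~\ref{rwfast} then says that a walker starting from a threatened point is either delayed below $v_+$ over a longer window, or must at some point exceed $v_+$ locally---and the latter is ruled out by Proposition~\ref{prop:decay_v+-}. This yields $p_H(v_+-\theta/4r)\leq\varepsilon$ (Lemma~\ref{firstdelay}) and, after one more renormalization pass (Lemma~\ref{Propimptheo}), $\liminf_H p_H(v_+-\varepsilon_\bullet)=0$ for a fixed $\varepsilon_\bullet>0$, contradicting the definition of $v_+$. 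In short, the decisive move is to exploit the \emph{slow} behavior guaranteed by $v_-<v_+$, not to amplify the already-existing fast behavior, since the latter only ever reproduces speeds below $v_+$.
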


Proposition \ref{v+=v-} will be proved in Section~\ref{s:v-=v+}.
The idea is roughly as follows. 
Suppose $v_{-} \neq v_{+}$, which by Corollary \ref{cv+mv-} amounts to assume $v_{-}<v_{+}$. 
Definitions \eqref{def_v_+}--\eqref{def_v_-} imply that $X$ must reach average speeds between $v_{-}$ and $v_{+}$ and very close to each of them with positive probability.
Suppose $X$ spends a non-trivial part of its time running with average speed just slightly above $v_{-}$.
This will introduce delays in comparison to $v_+$ which,
since $X$ must also reach average speeds very close to $v_+$, must be compensated by moving faster than $v_+$ for a good amount of time.
However, Proposition \ref{prop:decay_v+-} tells us that this is very unlikely.
Using renormalization arguments, this intuition can be made rigorous, 
leading to a contradiction that implies $v_{-}= v_{+}$.

With Propositions \ref{prop:decay_v+-} and \ref{v+=v-} in hand, the proof of Theorem~\ref{t:maingeneral} can be concluded using a Borel-Cantelli argument.
This will be done in Section~\ref{s:proof_main}.

We end this section with the proof of Proposition~\ref{p:bounds_to_vs}.
\begin{proof}[Proof of Proposition \ref{p:bounds_to_vs}]
We first show that $v_{\star} \leq v_{-} \leq \lambda$. 
Observe that a union bound together with \eqref{e:BAL} and translation invariance imply
\begin{equation*}
\begin{split}
\PP \left[ \tilde{A}_{H,w} (v_{\star}) \right] 
= \PP \left[ \exists \, y \in I_H(w)\cap \mathbb{L} : X^y_{H} - \pi_1(y) \leq v_{\star} H \right]
\leq C_\star\lambda H e^{-\kappa_\star(\log H)^{\gamma_\star}}.
\end{split}
\end{equation*}
Taking the supremum over $w$ we obtain $\liminf_{H \to \infty} \tilde{p}_H(v_{\star}) = 0$, implying by the definition that $v_{-} \geq v_{\star}$.
For the upper bound, fix $\varepsilon>0$ and notice that
\begin{equation*}
\begin{split}
\PP \left[ \tilde{A}_{H,w}(\lambda+\varepsilon) \right] & = \PP \Big[ \exists \, y \in I_H(w)\cap \mathbb{L} : X^y_{H} - \pi_1(y) \leq (\lambda+\varepsilon) H \Big]\\
& \geq \PP \left[X_{H} \leq (\lambda+\varepsilon) H\right]
= 1 - \PP \left[X_{H} > (\lambda+\varepsilon) H\right].
\end{split}
\end{equation*}
By Lemma~\ref{l:bdincr}, $|X_H|$ is stochastically dominated by a Poisson random variable $N_H$ with parameter $\lambda H$. Choosing $\mu>0$ small enough we obtain
\begin{equation}\label{eq:poisson_domination}
\PP \left[X_{H} > (\lambda+\varepsilon) H\right] \leq \mathbb{E}\big[ e^{\mu N_{H}}\big] e^{-(\lambda+\varepsilon)\mu H} = e^{\lambda H (e^{\mu}-1)}e^{-(\lambda+\varepsilon)\mu H} \leq e^{-c H}
\end{equation}
for some $c >0$. Taking the supremum in $w$, we obtain $\lim_{H \to \infty}\tilde{p}_{H}(\lambda+\varepsilon) = 1$ and thus $v_{-} \leq \lambda+\varepsilon$. Finally, since $\varepsilon > 0$ is arbitrary, $v_{-} \leq \lambda$.

Let us now focus on $v_{+}$. For the lower bound, observe that
\begin{equation*}
\begin{split}
\PP \left[A_{H,w} (v_{\star}) \right] & = \PP \left[ \exists \, y \in I_H(w)\cap \mathbb{L} : X^y_{H} - \pi_1(y) \geq v_{\star} H \right]\\
& \geq \PP \left[ X_{H} > v_{\star} H\right] = 1- \PP \left[X_{H} \leq v_{\star} H \right]\\
&\geq 1 - C_\star e^{-\kappa_\star\log^{\gamma_\star} H}.
\end{split}
\end{equation*}
This implies $\liminf_{H \to \infty} p_{H}(v_{\star})=1$, and thus $v_{+} \geq v_{\star}$.
For the upper bound, we proceed as in~\eqref{eq:poisson_domination}. For any $\varepsilon>0$, there exists $c>0$ such that
\begin{equation*}
\PP\left[A_{H,w}(\lambda+\varepsilon)\right] \leq \lambda H e^{- c H},
\end{equation*}
implying $\liminf_{H \to \infty}p_{H}(\lambda+\varepsilon)=0$ for every $\varepsilon>0$ and thus $v_{+} \leq \lambda$.
\end{proof}

\section{Proof of Proposition \ref{prop:decay_v+-}}
\label{s:Proof_decaiv}

In this section we prove Proposition \ref{prop:decay_v+-} using a renormalization approach in which we adapt and combine ideas from \cite{blondel2020random,hilario2015random,HKA19}.
We divide this section into three parts.
Section~\ref{sub_renor_1} contains the definitions of the scales and boxes that support the main events that we will consider. 
In Section~\ref{ss:decay_particular_sequence} we prove Proposition~\ref{prop:decay_v+-} for values of $H$ that are multiples of our scales. 
In Section~\ref{ss:interp} an interpolation argument is used to extend the result to $H \geq 1$. 
We conclude with the proof of Corollary~\ref{cv+mv-}.

\subsection{Scales and boxes}\label{sub_renor_1}

\newconstant{c:increasing_L_k}

\par We begin by introducing a sequence of scales $(L_k)_{k \geq 0}$ inductively as
\begin{equation}\label{def_scales_rec}
  L_0 := 10^{10} \quad \text{and} \quad L_{k + 1} := \ell_k L_k, \quad \text{for } k \geq 0,
\end{equation}
where $\ell_k := \lfloor L_k^{\nu} \rfloor$ and $\nu \in (0, 1)$ is chosen such that, with $\gamma$ as in \eqref{e:defgammakappa},
\begin{equation}\label{e:choice_nu}
6(1+\nu)^{3\gamma} \leq 7.
\end{equation}
Note that $L_k$ grows super-exponentially fast and, for some $\useconstant{c:increasing_L_k}>0$,
\begin{equation}\label{increasing_L_k}
\useconstant{c:increasing_L_k} L_{k}^{1+\nu} \leq L_{k+1} \leq L_{k}^{1+\nu}, \quad \text{for every } k \geq 0.
\end{equation}

Given an integer $L \in \N$ and a real number $h\geq 1$, we define the box 
\begin{equation*}
B_{hL}:=[-4\lambda h L, 5\lambda h L)\times [0, h L)\subset \mathbb{R}^2,
\end{equation*}
(recall $\lambda=2\Lambda$ with $\Lambda$ as in \eqref{e:BR}). Similarly to \eqref{e:defI_H}, we set
\begin{equation*}
I_{hL}=[0, \lambda h L) \times \{0\}\subset \mathbb{R}^2
\end{equation*}
to be the middle interval of length $\lambda h L$ contained in bottom face of $B_L^h$.

The value $L$ in the definitions above will frequently be replaced by $L_k$ to apply renormalization. The factor $\lambda$ ensures that any trajectory starting at $I_{hL}$ remains inside $B_{hL}$ up to the time $hL$ with high probability. The parameter $h$ is a scaling factor that will be chosen later; its role will become clearer in the proofs (see in particular the proof of Lemma~\ref{trigger} and preceding comments).

For $w \in \mathbb{R}^2$, we write the translations of $B_{hL}$ and $I_{hL}$ by $w$ as
\begin{equation*}
B_{hL}(w):=w + B_{hL} \quad \text{and}\quad I_{hL}(w)= w + I_{hL}.
\end{equation*} 
We denote 
\begin{equation*}
  M^h_k := \{h\} \times \{k\} \times \mathbb{R}^2
\end{equation*}
and for $m = (h,k,w) \in M^h_k$ and  $v\in\mathbb{R}$ we write
\begin{equation*}
B_m := B_{h L_k}(w),\text{  } I_m := I_{h L_k}(w), \text{  } A_m(v):=A_{hL_k,w}(v) \text{ and } \tilde{A}_m(v):=\tilde{A}_{hL_k,w}(v),
\end{equation*}
with $A_{H,w}(v)$, $\tilde{A}_{H,w}(v)$ as defined in Section~\ref{s:proofoverview}.

To relate events in consecutive scales, it will be useful to define index sets as follows.
For $m = (h,k+1,(z,s)) \in M^{h}_{k+1}$, define
\begin{equation}\label{e:Im}
\begin{split}
\cI_{m}= 
\bigg\{ & (h,k, (z+ i \lambda h L_k, s+ j \lambda hL_k) ) \in M^h_k \colon \\
& (i,j) \in [-4 \ell_k, 5 \ell_k -1] \times [0, \ell_k-1] \cap \Z^2 \bigg\}.
\end{split}
\end{equation}
Note that $|\cI_m|\leq9\ell_k^2$.
A simple but important observation is that
\begin{equation}\label{e:conseqIm}
\text{ if } y = (i, s+ j hL_k) \in B_m \text{ with } i \in \Z, j \in \N_0 \text{ then } \exists\, m' \in \cI_m \text{ such that }y \in I_{m'}.
\end{equation}

\subsection{Decay of $p_H(v)$, $\tilde{p}_H(v)$ along a particular sequence}
\label{ss:decay_particular_sequence}

In this section, we define a multiscale renormalization scheme to prove \eqref{e:decay_v+-} for a sequence of the form $H=h L_k$. 
A key ingredient in the argument is the decoupling inequality \eqref{e:DEC}.
In order to apply it, we need to ensure that certain relevant events are supported in well-separated boxes. 
To this end, we will use Lemma~\ref{l:bdincr} and Assumption~\eqref{e:BAL} with $v_\star>v_\circ$ to restrict to an event of large probability where the random walk is well-behaved. To describe this event, define for $m = (h,k+1, (z,t))$ the set
\begin{equation}\label{Cm}
\cC_{m}= \left\{ y = (x,s) \in B_m \colon\, x \in \Z, s-t \in h L_k \N_0 \right\}.
\end{equation}
whose cardinality is bounded by $9 (h L_k)^3$. Then put
\begin{equation}\label{control_random_walk}
D_{m}= \hat{D}_{m} \cap \bar{D}_{m}
\end{equation}
where
\begin{equation}\label{e:rw_inside_Bm}
\hat{D}_{m}=\bigcap_{y \in \cC_m} \Big\{\sup_{s \in [0,hL_k]}|X^y_{s}-\pi_{1}(y)|\leq 4 \lambda hL_{k} \Big\}
\end{equation}
and 
\begin{equation}\label{e:min_displ_Bm}
\bar{D}_{m}= \bigcap_{y \in \cC_m}\Big\{X^{y}_{hL_{k}}-\pi_{1}(y)> v_{\star} hL_{k}\Big\}.
\end{equation} 
Note that, on $\hat{D}_{m}$, if $y \in I_{m^\prime} \cap \cC_m$ for some $m^{\prime} \in M^{h}_{k}$ then $X^y$ remains inside $B_{m^\prime}$ for a time interval of length $hL_{k}$. Consequently, if $y \in I_m \cap \LL$ then $X^y$ does not leave $B_m$ before time $hL_{k+1}$.
To bound $\PP[\hat{D}_m^c]$, we use Lemmas~\ref{l:bdincr} and \ref{l:PoiConc} together with a union bound and translation invariance to obtain
\begin{equation}\label{event_one}
\sup_{m \in M^h_{k+1}} \PP [\hat{D}_{m}^{c}]\leq 9 \lambda (h L_{k})^3e^{-2\lambda hL_{k}}.
\end{equation}

On $\bar{D}_{m}$, if $X$ visits two points $y_{1}, y_{2} \in C_m$ then its average speed between these visits is larger than $v_{\star}$.
The ballisticity assumption \eqref{e:BAL} implies
\begin{equation}\label{event_two}
\PP [\bar{D}_m^{c}] \leq 9 C_\star \lambda (h L_{k})^3 e^{-\kappa_\star (\log h L_{k})^{\gamma_\star}}. 
\end{equation}

\newconstant{c:deviation_1}

Putting \eqref{event_one}--\eqref{event_two} together and using \eqref{e:defgammakappa}, \eqref{control_random_walk}, we obtain the following.
\begin{lemma}\label{drifandrwbox}
There exists a constant $\useconstant{c:deviation_1}>0$ such that
\begin{equation*}
\sup_{m \in M^h_{k+1}} \PP \left[D_m^c\right] \leq \useconstant{c:deviation_1} e^{-8 \kappa (\log h L_k)^{\gamma}} \qquad \text{ for every } k \geq 0 \text{ and } h \geq 1.
\end{equation*}
\end{lemma}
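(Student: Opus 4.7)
The plan is to proceed by a straightforward union bound, $\PP[D_m^c] \leq \PP[\hat D_m^c] + \PP[\bar D_m^c]$, and then absorb both of the estimates \eqref{event_one} and \eqref{event_two} into the target stretched-exponential bound $e^{-8\kappa(\log hL_k)^\gamma}$, adjusting the constant $\useconstant{c:deviation_1}$ as needed. Since $L_k \geq L_0 = 10^{10}$ and $h \geq 1$, we always have $hL_k$ bounded below by a large constant, which is what allows the required absorptions.

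For $\hat D_m^c$, the bound \eqref{event_one} yields a polynomial prefactor $9\lambda (hL_k)^3$ times a genuine exponential $e^{-2\lambda h L_k}$. Since $hL_k \to \infty$ with $k$ and $(\log hL_k)^\gamma$ grows only polylogarithmically in $hL_k$, we have $2\lambda hL_k - 3\log(hL_k) \geq 8\kappa(\log hL_k)^\gamma + O(1)$ for all relevant $h,k$, so this contribution is trivially dominated.

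For $\bar D_m^c$, the bound \eqref{event_two} gives a polynomial prefactor $9C_\star \lambda (hL_k)^3$ times $e^{-\kappa_\star(\log hL_k)^{\gamma_\star}}$. By the definitions \eqref{e:defgammakappa}, we have $\gamma \leq \gamma_\star$ and $9\kappa \leq \kappa_\star$, so in particular
\[
\kappa_\star (\log hL_k)^{\gamma_\star} \;\geq\; 9\kappa (\log hL_k)^{\gamma} \;=\; 8\kappa(\log hL_k)^{\gamma} + \kappa (\log hL_k)^{\gamma},
\]
valid whenever $hL_k \geq e$. The remaining factor $\kappa(\log hL_k)^\gamma$ can then absorb the polynomial prefactor $3\log hL_k$, since $\gamma > 1$ ensures $\kappa(\log hL_k)^\gamma - 3\log hL_k \to \infty$, and the residual is uniformly bounded below thanks to $hL_k \geq L_0 = 10^{10}$. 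This yields a bound of the form $\useconstant{c:deviation_1} e^{-8\kappa (\log hL_k)^\gamma}$ as required.

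The proof contains no real obstacle; the only care required is in checking that the constants satisfy $\gamma \leq \gamma_\star$ and $8\kappa < \kappa_\star$ (so that the polynomial and logarithmic correction terms can be absorbed), and then taking the supremum over $m \in M^h_{k+1}$ using translation invariance, which is already built into \eqref{event_one}--\eqref{event_two}.
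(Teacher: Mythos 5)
Your proposal is correct and matches the paper's (implicit) proof exactly: the paper states the lemma follows by ``putting \eqref{event_one}--\eqref{event_two} together and using \eqref{e:defgammakappa}, \eqref{control_random_walk},'' which is precisely the union bound plus absorption of the polynomial prefactors into the stretched exponential that you carry out. Your bookkeeping with $\gamma\le\gamma_\star$, $9\kappa\le\kappa_\star$, and $hL_k\ge L_0$ is the right way to make the absorption explicit.
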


\newconstant{c:h_trigger}
\newconstantk{k:after_turbo_charger}

The next result is the main goal of this section.
\begin{lemma}\label{trigger}
For every $v>v_{+}$, there exist constants $\useconstant{c:h_trigger}>0$, $\useconstantk{k:after_turbo_charger} \in \N$ such that
\begin{equation}\label{eq_trigger}
p_{\useconstant{c:h_trigger}L_k}(v)\leq e^{-4 \kappa \log^{\gamma} L_{k}} \qquad \text{ for all } k \geq \useconstantk{k:after_turbo_charger}.
\end{equation}
Analogously, for every $\tilde{v}<v_-$, there exist $\tilde{\useconstant{c:h_trigger}}>0$, $\tilde{\useconstantk{k:after_turbo_charger}} \in \N$ such that
\begin{equation}\label{eq_trigger_tilde}
\tilde{p}_{\tilde{\useconstant{c:h_trigger}}L_k} (\tilde{v})\leq e^{-4 \kappa \log^{\gamma} L_{k}} \qquad \text{ for all } k \geq \tilde{\useconstantk{k:after_turbo_charger}}.
\end{equation}
\end{lemma}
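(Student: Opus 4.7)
The proof is a multiscale induction in the spirit of \cite{blondel2020random, HKA19}. Fix $v>v_+$ and pick $v_0,v_\infty$ with $v_+<v_0<v_\infty<v$ together with an increasing sequence $v_k \nearrow v_\infty$ whose increments $v_{k+1}-v_k$ I will force to be of order $1/\ell_k$. Because $\ell_k \sim L_k^\nu$ grows super-exponentially in $k$, the sum $\sum_k(v_{k+1}-v_k)$ converges and can be kept smaller than $v_\infty - v_0$, ensuring $v_k \in (v_+, v_\infty)$ throughout. The inductive target is
\begin{equation*}
p_{h L_k}(v_k) \leq e^{-4 \kappa \log^\gamma L_k}
\qquad \text{for all } k \geq \useconstantk{k:after_turbo_charger},
\end{equation*}
for some $h = \useconstant{c:h_trigger} \geq 1$ and $\useconstantk{k:after_turbo_charger} \in \N$. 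Since $v \geq v_k$ and $p_H(\cdot)$ is non-increasing in the speed parameter, this yields \eqref{eq_trigger}. The statement for $\tilde p$ is handled symmetrically using a decreasing sequence $\tilde v_k \searrow \tilde v_\infty \in (\tilde v, v_-)$.

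\textbf{Cascade step.} Fix $m=(h,k+1,w)$ and restrict to the good event $D_m$ from Lemma~\ref{drifandrwbox}: for each $y \in I_m \cap \LL$ the walk $X^y$ stays in $B_m$ through time $hL_{k+1}$ and every sub-displacement $a_j := X^y_{(j+1)hL_k} - X^y_{jhL_k}$ satisfies $v_\star hL_k < a_j \leq 4\lambda hL_k$ for $0 \leq j < \ell_k$. On $A_m(v_{k+1})\cap D_m$ we also have $\sum_j a_j \geq v_{k+1}\ell_k hL_k$. A pigeonhole estimate shows: if every index $j$ with $a_j \geq v_k hL_k$ sits inside a window of width at most $J$ (a constant to be fixed in the next step), then the total displacement is bounded by $(\ell_k - J)v_k hL_k + 4\lambda J hL_k$, yielding $(v_{k+1}-v_k)\ell_k \leq J(4\lambda - v_k)$. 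Forcing $v_{k+1}-v_k > 4\lambda J/\ell_k$ in the construction of $(v_k)$ contradicts this, so one can extract $0 \leq j_1 < j_2 < \ell_k$ with $j_2-j_1\geq J$ such that both $a_{j_1}, a_{j_2}\geq v_k hL_k$. By coalescence and \eqref{e:conseqIm}, the intermediate space-time positions sit inside $I_{m_{j_1}}, I_{m_{j_2}}$ for some $m_{j_1}, m_{j_2} \in \cI_m$, and the scale-$k$ events $A_{m_{j_1}}(v_k), A_{m_{j_2}}(v_k)$ both occur.

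\textbf{Decoupling and closing the recurrence.} On $D_m$, $\pi_1(w_{j_2}) - \pi_1(w_{j_1}) \geq v_\star(j_2-j_1)hL_k - \lambda hL_k$, so the horizontal distance between $B_{m_{j_1}}$ and $B_{m_{j_2}}$ is at least $(v_\star - v_\circ)(j_2-j_1)hL_k - O(hL_k)$, while their vertical distance is at most $(j_2-j_1)hL_k$. Since $v_\star > v_\circ$, choosing $J$ large depending on $v_\star - v_\circ, \lambda, \usebigconstant{c:decoupling_2}$ and $hL_k$ large enough guarantees \eqref{condition_to_decouple}. Applying \eqref{e:DEC} (together with Remark~\ref{r:ext_decouple}) and taking a union bound over the at most $(9\ell_k^2)^2 = 81\ell_k^4$ admissible pairs $(m_{j_1},m_{j_2}) \in \cI_m \times \cI_m$ gives
\begin{equation*}
p_{hL_{k+1}}(v_{k+1}) \leq \PP[D_m^c] + 81\ell_k^4 \left( p_{hL_k}(v_k)^2 + C_\circ e^{-\kappa_\circ (\log chL_k)^{\gamma_\circ}} \right).
\end{equation*}
Plugging in the inductive bound, Lemma~\ref{drifandrwbox}, the estimate $\ell_k^4 \leq L_k^{4\nu}$, the choice $\kappa = \tfrac19 \min\{\kappa_\circ,\kappa_\star\}$, and the bound $\log^\gamma L_{k+1} \leq (1+\nu)^\gamma \log^\gamma L_k$ with $(1+\nu)^\gamma < 2$ from \eqref{e:choice_nu}, each of the three right-hand side terms can be made smaller than $\tfrac13 e^{-4\kappa \log^\gamma L_{k+1}}$, closing the induction for $k$ large. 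For the base case, $v_0 > v_+$ and \eqref{def_v_+} give $\liminf_{H \to \infty} p_H(v_0) = 0$, so for $\useconstantk{k:after_turbo_charger}$ large one finds $H^\ast \geq L_{\useconstantk{k:after_turbo_charger}}$ with $p_{H^\ast}(v_0) \leq e^{-4\kappa \log^\gamma L_{\useconstantk{k:after_turbo_charger}}}$; set $\useconstant{c:h_trigger} = H^\ast/L_{\useconstantk{k:after_turbo_charger}}\geq 1$.

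\textbf{Symmetric case and main obstacle.} For $\tilde p$, on $\tilde A_m(\tilde v_{k+1}) \cap D_m$ one has $\sum_j a_j \leq \tilde v_{k+1}\ell_k hL_k$ together with $a_j > v_\star hL_k$ for every $j$, so the mirror pigeonhole extracts two $J$-separated indices with $a_j \leq \tilde v_k hL_k$ and the rest of the argument proceeds identically (the case $\tilde v < v_\star$ is even easier because then $\tilde A_m(\tilde v) \cap D_m = \emptyset$, so only the $\PP[D_m^c]$ term survives). The most delicate aspect of the whole proof is the coordination of the parameter sequence $(v_k)$ with the combinatorics of the pigeonhole: each step costs a drop in the speed parameter of order $1/\ell_k$, and this total loss across all scales must be kept strictly below $v - v_+$ so that every $v_k$ remains in the regime where $\liminf_H p_H(v_k) = 0$ furnishes the base case; the summability of $\sum_k 1/\ell_k$, which is exactly what the super-exponential growth of $\ell_k$ mandated by \eqref{def_scales_rec} provides, is what makes this feasible.
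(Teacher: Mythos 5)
Your proposal is correct and follows essentially the same multiscale renormalization strategy as the paper, which packages the argument into Lemma~\ref{l:cascading_boxes} (the deterministic cascade extracting two well-separated sub-boxes on the bad event), Lemma~\ref{l:recgen} (the resulting quadratic recursion via decoupling), and Lemma~\ref{recursive_estimate} (propagation of the bound along scales), before triggering the base case from the definition of $v_+$. The only differences are cosmetic: you build the speed sequence with increments of order $1/\ell_k$ whereas the paper uses increments $1/k^2$ (both sum to something small since $\ell_k$ grows super-exponentially), your pigeonhole directly extracts two indices $j_1<j_2$ with $j_2-j_1\geq J$ rather than $N+1$ distinct-time boxes of which the first and last are kept, and you invoke the uniform bound $a_j\leq 4\lambda hL_k$ from $\hat D_m$ in place of the paper's device of taking $v_{\max}=5\lambda>4\lambda$ so that $p_{hL_k}(v_{\max})$ drops out of the recursion. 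These all lead to the same closing of the induction under $(1+\nu)^\gamma<2$ and the same trigger via $\liminf_{H\to\infty}p_H(v_0)=0$.
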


Lemma~\ref{trigger} will be proved via renormalization. 
The idea consists in recursively transferring estimates for the probability of $A_m$, $\tilde{A}_m$ from one scale to the next.
For this, the speeds considered in each scale must change slightly in order to accommodate small errors.
We now present the deterministic part of this argument for the case of $A_m$, which is inspired by \cite[Lemma 5.2]{HKA19}.

\newconstantk{k:scale_to_boxes}

\begin{lemma}\label{l:cascading_boxes}
There exists an integer $\useconstantk{k:scale_to_boxes}\in \N$ such that the following holds for any $k \geq \useconstantk{k:scale_to_boxes}$.
Fix two speeds $0<v_{\min}<v_{\max}$ and set $\bar{v} = v_{\min}+(v_{\max} - v_{\min})/\sqrt{\ell_k}$.
Then, for any $h\geq1$ and any $m \in M^h_{k+1}$, one of the following is true:
\begin{enumerate}
\item[a)] $A_m(\bar{v}) \cap D_m$ does not occur;
\item[b)] $A_{m'}(v_{\max})$ occurs for some $m' \in \cI_m$;
\item[c)] there exist two indices $m_1$, $m_2 \in \cI_m$ such that $A_{m_{1}}(v_{\min})\cap A_{m_{2}}(v_{\min})$ occurs and $\dist_{H}\geq (v_\circ \dist_{V}+\usebigconstant{c:decoupling_2} hL_k + \usebigconstant{c:decoupling_3}) \vee \lambda hL_k$, where $\dist_{H}$ and $\dist_{V}$ are the horizontal and vertical distances of the boxes $B_{m_{1}}$ and $B_{m_{2}}$.
\end{enumerate}
\end{lemma}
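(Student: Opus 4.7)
My approach is deterministic. Suppose (a) fails, so that $A_m(\bar v)\cap D_m$ occurs; pick a witness $y\in I_m\cap\LL$ with $X^y_{hL_{k+1}}-\pi_1(y)\geq\bar v\,hL_{k+1}$. Slice the time interval into $\ell_k$ blocks of length $hL_k$ and set $y_j=(X^y_{jhL_k},\pi_2(y)+jhL_k)$ for $0\leq j\leq \ell_k$. By coalescence, the per-block displacement satisfies
\[
\Delta_j:=X^y_{(j+1)hL_k}-X^y_{jhL_k}=X^{y_j}_{hL_k}-\pi_1(y_j).
\]
On $\hat D_m$ each $y_j$ (for $0\leq j\leq\ell_k-1$) stays in $B_m$ and therefore lies in $\cC_m$, so by \eqref{e:conseqIm} there exists $m'_j\in\cI_m$ with $y_j\in I_{m'_j}$.

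Next I split according to the sizes of the $\Delta_j$. If some $\Delta_j\geq v_{\max}hL_k$ then $A_{m'_j}(v_{\max})$ witnesses (b). Otherwise every $\Delta_j<v_{\max}hL_k$; writing $G=\{j:\Delta_j\geq v_{\min}hL_k\}$, we bound
\[
\bar v\,\ell_khL_k\leq\sum_j\Delta_j\leq|G|v_{\max}hL_k+(\ell_k-|G|)v_{\min}hL_k,
\]
which, combined with $\bar v=v_{\min}+(v_{\max}-v_{\min})/\sqrt{\ell_k}$, forces $|G|\geq\sqrt{\ell_k}$. For every $j\in G$ the event $A_{m'_j}(v_{\min})$ then holds.

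To derive (c) I take $j_1=\min G$, $j_2=\max G$, so that $j_2-j_1\geq\sqrt{\ell_k}-1$ and $\dist_V(B_{m'_{j_1}},B_{m'_{j_2}})\leq(j_2-j_1)hL_k$. Because each $B_{m'_j}$ has horizontal width $9\lambda hL_k$ and its spatial index is obtained by rounding $\pi_1(y_j)$ to the $(\lambda hL_k)$-grid used to tile $B_m$, the horizontal separation of the two boxes is at least $\pi_1(y_{j_2})-\pi_1(y_{j_1})-10\lambda hL_k$. Now $\bar D_m$ guarantees, via coalescence, that $\Delta_j>v_\star hL_k$ for every $j\in[0,\ell_k-1]$, so $\pi_1(y_{j_2})-\pi_1(y_{j_1})>v_\star(j_2-j_1)hL_k$. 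Since $v_\star>v_\circ$ by hypothesis and $j_2-j_1\geq\sqrt{\ell_k}-1\to\infty$, for $k\geq\useconstantk{k:scale_to_boxes}$ sufficiently large this lower bound exceeds both $v_\circ(j_2-j_1)hL_k+\usebigconstant{c:decoupling_2}hL_k+\usebigconstant{c:decoupling_3}$ and $\lambda hL_k$, giving (c) with $m_1=m'_{j_1}$ and $m_2=m'_{j_2}$.

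The main obstacle is the last paragraph: one must carefully control the rounding between the walker's position $\pi_1(y_j)$ and the spatial index of $m'_j$ in the $(\lambda hL_k)$-grid, and absorb both the resulting $O(\lambda hL_k)$ correction and the constants $\usebigconstant{c:decoupling_2}$, $\usebigconstant{c:decoupling_3}$, $\lambda hL_k$ into the gain $(v_\star-v_\circ)(j_2-j_1)hL_k\gtrsim(v_\star-v_\circ)\sqrt{\ell_k}hL_k$ that the strict ballisticity $v_\star>v_\circ$ and the divergence of $\ell_k$ provide. This margin is exactly what fixes the threshold $\useconstantk{k:scale_to_boxes}$.
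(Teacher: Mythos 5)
Your proof is correct and essentially follows the paper's argument: both split into $\ell_k$ blocks, use $\hat D_m$ to locate sub-boxes in $\cI_m$, count the number of blocks with displacement at least $v_{\min} h L_k$ via the same averaging inequality, and then invoke $\bar D_m$ together with the geometry of the boxes to produce a pair $m_1,m_2$ satisfying the required distance bound. The only cosmetic difference is that you directly bound $|G|\geq\sqrt{\ell_k}$ and let $\ell_k\to\infty$ supply the margin, whereas the paper pre-fixes an integer $N$ with $\delta_\star N\geq 11\lambda+\usebigconstant{c:decoupling_2}+\usebigconstant{c:decoupling_3}$ and $N<\sqrt{\ell_k}$, then proves by contradiction that at least $N+1$ good boxes (at distinct times) exist.
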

\begin{proof} 
Write $\delta_\star := v_{\star}-v_\circ>0$ and fix $N, \useconstantk{k:scale_to_boxes} \in \N$ such that
\begin{equation}
\label{e:assumptionN}
\delta_\star N \geq 11\lambda + \usebigconstant{c:decoupling_2} + \usebigconstant{c:decoupling_3}
\quad \text{ and } \quad \frac{N}{\ell_k}<\frac{1}{\sqrt{\ell_k}} \; \text{ for all } k \geq \useconstantk{k:scale_to_boxes}.
\end{equation}
Fix $k \geq \useconstantk{k:scale_to_boxes}$, $h \geq 1$, $m \in M^h_{k+1}$ and assume that both $a)$ and $b)$ fail,
i.e., $A_{m}(\bar{v})\cap D_{m}$ occurs and $\cup_{m' \in \cI_m}A_{m'}(v_{\max})$ does not (recall \eqref{control_random_walk}).
We claim that
\begin{equation}\label{claim_cascading}
\begin{array}{cc}
&\text{there exist $y \in I_m \cap \LL$ and at least $N+1$ elements}\\
&\text{$m_{i} = (h,k,(x_{i}, s_{i})) \in \cI_m$, $1 \leq i\leq N+1$, with $s_i \neq s_j$ when $i \neq j$,}\\
&\text{such that $A_{m_i} (v_{\min})$ occurs and $X^y$ visits $I_{m_i}$ for all $1 \leq i \leq N+1$.}\\
\end{array}
\end{equation}
We will prove \eqref{claim_cascading} by contradiction. 
Suppose that it is false.
Then, starting from any $y \in I_m \cap \LL$,  
we split the displacement of $X^y$ during time $hL_{k+1}$ into the sum of displacements in time intervals of length $hL_k$ to obtain
\begin{equation}\label{cascading_displacement_1}
\begin{split}
X^{y}_{h L_{k+1}}-\pi_{1}(y) 
&= \sum_{j=0}^{\ell_k-1}\Big[X^{Y^{y}_{jhL_{k}}}_{h L_{k}}-X^{y}_{jh L_{k}}\Big] 
 \leq \left[\ell_k - N\right]hL_k v_{\min} + N h L_{k} v_{\max}\\
& = h L_{k+1} \Big\{ v_{\min} + (v_{\max} - v_{\min}) \frac{N}{\ell_k} \Big\}
< h L_{k+1} \bar{v},
\end{split}
\end{equation}
where for the first inequality we used that $\hat{D}_m$ occurs to conclude $Y^y_{jhL_k} \in B_m$ for every $0 \leq j \leq \ell_{k}-1$ so that we can apply \eqref{e:conseqIm} and the definition of $A_{m'}$, and for the second we used \eqref{e:assumptionN}.
This implies that $A_{m}(\bar{v})$ does not occur, proving \eqref{claim_cascading}. 

Let us assume that $m_1, \ldots, m_{N+1}$ are indexed so that the time coordinate of $I_{m_1}$ is the smallest and that of $I_{m_2}$ is the largest. 
Hence ${m_{1}}$ and ${m_2}$ are the first and the last of the $N+1$ elements in \eqref{claim_cascading}  for which $A_{m_{i}}(v_k)$ occurs, as illustrated in Figure \ref{fig:cascading_boxes}.

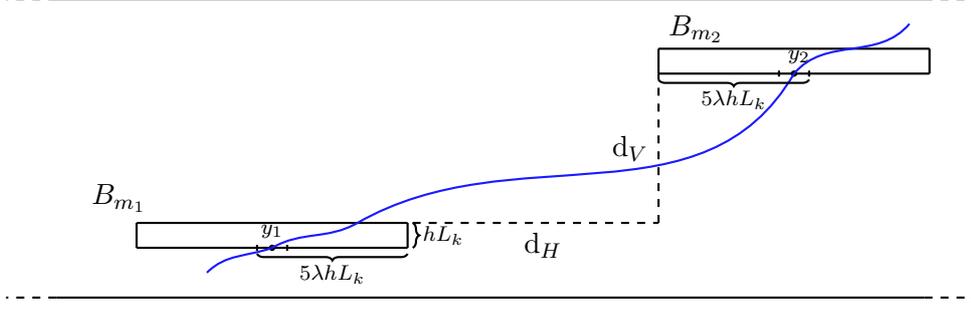
\begin{figure}
\begin{center}
\begin{tikzpicture}[scale=0.66]


\draw[-, thick]  (2.6,1) -- (2.6,1.5);
\draw[-, thick]  (2.6,1) -- (8,1);
\draw[-, thick]  (2.6,1.5) -- (8,1.5);
\draw[-, thick]  (8,1) -- (8,1.5);

\draw[-, thick]  (13,4.5) -- (13,5);
\draw[-, thick]  (13,4.5) -- (18.4,4.5);
\draw[-, thick]  (13,5) -- (18.4,5);
\draw[-, thick]  (18.4,4.5) -- (18.4,5);

\draw[-, thick]  (1,0) -- (18.3,0);
\draw[-, thick]  (1,6) -- (18.3,6);
\draw[dashed, thick]  (1,0) -- (0,0);
\draw[dashed, thick]  (1,6) -- (0,6);
\draw[dashed, thick]  (18.3,0) -- (19.3,0);
\draw[dashed, thick]  (18.3,6) -- (19.3,6);

\filldraw (5.3,1) circle (1.5pt);
\filldraw (15.7,4.5) circle (1.5pt);
\node[above] at (5.3,1) {\scriptsize{$y_{1}$}};
\node[above] at (15.8,4.5) {\scriptsize{$y_{2}$}};
\node[below] at (10.7,1.5) {$\dist_{H}$};
\node[left] at (13,3) {$\dist_{V}$};
\node[left] at (3,2) {$B_{m_{1}}$};
\node[right] at (13,5.4) {$B_{m_{2}}$};


\draw[-, thick]  (5,0.94) -- (5,1.06);
\draw[-, thick]  (5.6,0.94) -- (5.6,1.06);
\draw[-, thick]  (15.4,4.44) -- (15.4,4.56);
\draw[-, thick]  (16,4.44) -- (16,4.56);
\draw [blue!90!] [-,thick] (4,0.5) to[out=45, in=200] (5.3,1);
\draw [blue!90!] [-,thick] (5.3,1) to[out=30, in=210] (7,1.5);
\draw [blue!90!] [-,thick] (7,1.5) to[out=30, in=240] (15.7,4.5);
\draw [blue!90!] [-,thick] (15.7,4.5) to[out=50, in=230] (18,5.5);
\draw[dashed, thick]  (8.1,1.5) -- (13,1.5);
\draw[dashed, thick]  (13,1.5) -- (13,4.5);
\draw[thick,decoration={brace,mirror},decorate] (5,0.88) -- (8,0.88) node[midway,below,yshift=-.005cm] {\scriptsize{$5\lambda hL_{k}$}};
\draw[thick,decoration={brace,mirror},decorate] (13,4.38) -- (16,4.38) node[midway,below,yshift=-.005cm] {\scriptsize{$5\lambda hL_{k}$}};
\draw [thick,decoration={brace,mirror},decorate]  (8.1,1) -- (8.1,1.5); 
\node[right] at (8.1, 1.25) {\scriptsize{$ h L_k$}};
\end{tikzpicture}

\caption{\small{The boxes and $B_{m_{1}}$ and $B_{m_2}$ contained in $B_{m}$ and the points $y_{1} \in I_{m_1}$ and $y_2 \in I_{m_2}$.}}\label{fig:cascading_boxes}
\end{center}
\end{figure}

Let $\dist_V$, $\dist_H$ be the vertical and the horizontal distances of $B_{m_{1}}$, $B_{m_2}$. 
By \eqref{claim_cascading}, there exist $y_{1} \in I_{m_{1}}\cap \mathbb{Z}^2$, $y_2 \in I_{m_2}\cap \mathbb{Z}^2$ such that $Y^{y_1}_{\dist_V + hL_k}=y_2$.
Note that $d_V + hL_k = \hat{N} h L_k$ for some integer $\hat{N} \geq N$.
Since we are on $\bar{D}_m$,
\begin{equation}\label{bound_to_displacement_1}
\begin{aligned}
& \pi_{1}(y_2)-\pi_{1}(y_1)  = X_{\hat{N} h L_k}^{y_1} - \pi_1(y_1)
= \sum_{j=0}^{\hat{N}-1} X^{Y^{y_1}_{j h L_k}}_{h L_k} - X^{y_1}_{j h L_k}
\\
& \geq v_\star \hat{N} h L_k = v_{\star}(\dist_{V}+hL_{k})= (v_\circ+\delta_\star)(\dist_{V}+hL_{k}) > v_\circ \dist_V + \delta_\star N h L_k.
\end{aligned}
\end{equation}
On the other hand, by the geometry of the boxes $B_{m_i}$ and since $y_i \in I_{m_i}$, $i=1,2$, 
\begin{equation}\label{bound_to_displacement_2}
\pi_{1}(y_2)-\pi_{1}(y_1)\leq \dist_{H}+10 \lambda h L_{k}.
\end{equation}
Combining~\eqref{bound_to_displacement_1}--\eqref{bound_to_displacement_2} and using \eqref{e:assumptionN}, we obtain
\begin{equation}\label{bound_dh_1}
\begin{aligned}
\dist_{H} 
& > v_\circ \dist_V + \delta_\star N h L_k - 10\lambda h L_k 
 \geq v_\circ \dist_{V} + (\lambda+\usebigconstant{c:decoupling_2}+\usebigconstant{c:decoupling_3}) h L_{k} \\
& \geq v_\circ \dist_V + \usebigconstant{c:decoupling_2} h L_k + \usebigconstant{c:decoupling_3} + \lambda h L_k
\geq (v_\circ \dist_V + \usebigconstant{c:decoupling_2} h L_k + \usebigconstant{c:decoupling_3})\vee \lambda h L_k
\end{aligned}
\end{equation}
as claimed, where for the third inequality we used that $h L_k \geq 1$.
\end{proof}

\newconstant{c:general_recursion}

As a consequence we obtain the following result.
\begin{lemma}\label{l:recgen}
There exists a constant $\useconstant{c:general_recursion} > 0$ such that, for any $0<v_{\min} < v_{\max}$, any $h \geq 1$ and
any $k \geq \useconstantk{k:scale_to_boxes}$,
\begin{equation}\label{e:recgen1}
p_{hL_{k+1}}\Big(v_{\min} + \frac{v_{\max}-v_{\min}}{\sqrt{\ell_k}}\Big) 
\leq \useconstant{c:general_recursion} \ell_k^4 \left\{ p_{h Lk}(v_{\min})^2 + p_{h L_k}(v_{\max}) + e^{-8\kappa (\log h L_k)^\gamma}\right\}
\end{equation}
and the term $p_{hL_k}(v_{\max})$ may be omitted if $v_{\max} > 4 \lambda$. Analogously,
\begin{equation}\label{e:recgen2}
\tilde{p}_{hL_{k+1}}\Big(v_{\max} - \frac{v_{\max}-v_{\min}}{\sqrt{\ell_k}}\Big) 
\leq \useconstant{c:general_recursion} \ell_k^4 \left\{ \tilde{p}_{h Lk}(v_{\max})^2 + \tilde{p}_{h L_k}(v_{\min}) + e^{-8\kappa (\log h L_k)^\gamma}\right\}
\end{equation}
and the term $\tilde{p}_{hL_k}(v_{\min})$ may be omitted if $v_{\min} \leq v_\star$.
\end{lemma}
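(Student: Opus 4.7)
The plan is to apply Lemma~\ref{l:cascading_boxes} at scale $k$ to reduce an occurrence of $A_m(\bar v)$ at scale $k+1$ to events at scale $k$, and then combine a union bound over $\cI_m$ with the decoupling inequality \eqref{e:DEC} (via Remark~\ref{r:ext_decouple}) to treat the pair contribution. Fix $m = (h, k+1, w) \in M^h_{k+1}$ and decompose
\[
\PP[A_m(\bar v)] \leq \PP\bigl[A_m(\bar v) \cap D_m\bigr] + \PP[D_m^c],
\]
where the second summand is controlled by Lemma~\ref{drifandrwbox}. On the first summand, Lemma~\ref{l:cascading_boxes} (with $v_{\min}$, $v_{\max}$) gives
\[
A_m(\bar v) \cap D_m \subset \Bigl(\bigcup_{m' \in \cI_m} A_{m'}(v_{\max})\Bigr) \cup \Bigl(\bigcup_{(m_1, m_2)} A_{m_1}(v_{\min}) \cap A_{m_2}(v_{\min})\Bigr),
\]
where the second union ranges over pairs satisfying the horizontal separation in case (c).

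A union bound together with $|\cI_m| \leq 9\ell_k^2$ and translation invariance bounds the first set by $9\ell_k^2\, p_{hL_k}(v_{\max})$. For each of the at most $81\ell_k^4$ pairs in the second set, I would first replace $A_{m_i}(v_{\min})$ by its sub-event in which the witnessing walk does not exit $B_{m_i}$ up to time $hL_k$; this restricted event is supported on $B_{m_i}$ in the sense of Definition~\ref{d:support}, satisfies the same bound $p_{hL_k}(v_{\min})$, and contains $A_{m_i}(v_{\min})\cap \hat D_m$. Extending $B_{m_1}$, $B_{m_2}$ to half-infinite horizontal strips of temporal width $s = hL_k$, the separation in case (c) of Lemma~\ref{l:cascading_boxes} is precisely condition \eqref{condition_to_decouple}, so Remark~\ref{r:ext_decouple} yields
\[
\PP\bigl[A_{m_1}(v_{\min}) \cap A_{m_2}(v_{\min}) \cap D_m\bigr] \leq p_{hL_k}(v_{\min})^2 + C_\circ e^{-\kappa_\circ (\log \dist_H)^{\gamma_\circ}}.
\]
Since $\dist_H \geq \lambda hL_k$, $9\kappa \leq \kappa_\circ$ and $\gamma \leq \gamma_\circ$, the decoupling error is absorbed by $e^{-8\kappa (\log hL_k)^{\gamma}}$ for all $k$ large (up to raising $\useconstantk{k:scale_to_boxes}$ if necessary). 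Collecting the three contributions, taking the supremum over $w$ and packaging the combinatorial factors into $\useconstant{c:general_recursion}$ gives \eqref{e:recgen1}. The omission when $v_{\max} > 4\lambda$ is immediate: on $\hat D_m$ we have $|X^y_{hL_k}-\pi_1(y)| \leq 4\lambda hL_k < v_{\max} hL_k$, so case (b) of Lemma~\ref{l:cascading_boxes} cannot occur on $D_m$.

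For \eqref{e:recgen2} I would first prove an inverted cascading lemma: on $\tilde A_m(\bar v) \cap D_m$ with $\bar v = v_{\max} - (v_{\max}-v_{\min})/\sqrt{\ell_k}$, either some $\tilde A_{m'}(v_{\min})$ occurs at a single $m' \in \cI_m$, or at least $N+1$ indices $m_i \in \cI_m$ produce $\tilde A_{m_i}(v_{\max})$, of which the first and last are horizontally separated as in case (c). The telescoping displacement argument is identical to that of Lemma~\ref{l:cascading_boxes}, and the required horizontal separation again comes from $\bar D_m$: ballisticity is needed regardless of whether we control the walk's displacement from above or below, because it is what spreads $B_{m_1}$ and $B_{m_2}$ horizontally. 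The remainder of the argument reproduces the one above, with the roles of $v_{\min}$ and $v_{\max}$ swapped. The omission when $v_{\min} \leq v_\star$ follows because on $\bar D_m$ every walk starting from $y \in \cC_m$ satisfies $X^y_{hL_k}-\pi_1(y) > v_\star hL_k \geq v_{\min} hL_k$, so $\tilde A_{m'}(v_{\min})$ is impossible.

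The main technical point to check is the measurability step in the second paragraph: the events $A_{m_i}(v_{\min})$ are a priori defined through the globally-constructed walks $X^y$, whose trajectories could in principle depend on $\eta$ and $\Pi$ far outside $B_{m_i}$. Intersecting with $\hat D_m$ localizes the witnessing walks to $B_{m_i}$ without loss, making the application of \eqref{e:DEC} legitimate.
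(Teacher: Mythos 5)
Your proposal is correct and follows essentially the same route as the paper: decompose $\PP[A_m(\bar v)]$ via $D_m$, apply Lemma~\ref{l:cascading_boxes}, union-bound over $\cI_m$, and invoke \eqref{e:DEC} through Remark~\ref{r:ext_decouple} after extending the boxes to half-infinite strips of height $hL_k$. The one place where you diverge slightly is the measurability step: you replace $A_{m_i}(v_{\min})$ by its localized sub-event (the witnessing walk stays in $B_{m_i}$), which is contained in $A_{m_i}(v_{\min})$ so its probability is still bounded by $p_{hL_k}(v_{\min})$ with no extra error; the paper instead pays a $+2\PP[D_m^c]$ correction in the decoupling step and absorbs it using Lemma~\ref{drifandrwbox}. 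Also, there is no need to raise $\useconstantk{k:scale_to_boxes}$ to absorb $C_\circ$: since $\kappa_\circ\ge 9\kappa>8\kappa$ the constant $C_\circ$ can simply be swallowed into $\useconstant{c:general_recursion}$, which keeps the stated quantifier $k\ge\useconstantk{k:scale_to_boxes}$ intact.
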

\begin{proof}
Here we will only prove \eqref{e:recgen1}. The proof of \eqref{e:recgen2} can be obtained similarly using the analogous of Lemma~\ref{l:cascading_boxes} for $\tilde{A}_m$. The details are left for the reader.

Fix $k\geq\useconstantk{k:scale_to_boxes}$, $h \geq 1$, $m \in M^{h}_{k+1}$, $v_{\min} < v_{\max}$ and let $\bar{v}= v_{\min}+(v_{\max} - v_{\min})/\sqrt{\ell_k}$.
Lemma~\ref{l:cascading_boxes} together with a union bound and $|\cI_m|\leq 9 \ell_k^2$ implies
\begin{equation*}\label{e:prrec1}
\PP \left[A_{m}(\bar{v})\cap D_{m}\right] 
\leq 81 \ell_k^4 \Big\{ \sup_{\left({m_1},{m_2}\right)_{m}} \PP \left[A_{m_{1}}(v_{\min})\cap A_{m_{2}}(v_{\min}) \cap D_m\right] + p_{hL_k}(v_{\max}) \Big\},
\end{equation*}
where $\left({m_1},{m_2}\right)_{m}$ stands for the set of pairs $m_{1}, m_{2} \in \cI_m$ such that the vertical and horizontal distances of $B_{m_1}, B_{m_2}$ satisfy $\dist_{H}\geq (v_\circ \dist_{V}+ \usebigconstant{c:decoupling_2} hL_{k} +  \usebigconstant{c:decoupling_3}) \vee \lambda hL_k$.

Fix $m_1, m_2 \in \left({m_1},{m_2}\right)_{m}$.  
On $D_m$, occurrence of $A_{m_{1}}(v_{\min})$, $A_{m_{2}}(v_{\min})$ is determined by $\eta, \Pi$ inside $B_{m_{1}}$, $B_{m_{2}}$. 
Assumption~\eqref{e:DEC} and Remark~\ref{r:ext_decouple} thus give
\begin{equation*}
\begin{aligned}
 \PP\left[A_{m_{1}}(v_{\min})\cap A_{m_{2}}(v_{\min}) \cap D_m\right] 
 \leq 
 & \PP\left[A_{m_{1}}(v_{\min})\right] \PP\left[A_{m_{2}}(v_{\min})\right] \\
 & + C_\circ e^{-\kappa_\circ (\log \dist_H)^{\gamma_\circ}} + 2\PP\left[D_m^c\right].
\end{aligned}
\end{equation*}
Using Lemma~\ref{drifandrwbox}, \eqref{e:defgammakappa} and $\dist_{H} \geq \lambda h L_{k}\geq h L_{k}$, we obtain
\begin{equation}\label{imp_relation_between_scales}
\begin{split}
\PP[A_{m}(\bar{v})]&\leq \PP\left[A_{m}(\bar{v})\cap D_{m}\right] + \PP[D_{m}^{c}]\\
& \leq 81 \ell_k^4 \big\{
p_{hL_{k}}(v_{\min})^{2} + p_{h L_k}(v_{\max}) + C_\circ e^{-\kappa_\circ (\log h L_{k})^{\gamma}} + 3\PP[D_{m}^{c}] \big\}\\
& \leq \useconstant{c:general_recursion}\ell_k^4 \big[ p_{hL_{k}}^{2}(v_k) + p_{h L_k}(v_{\max}) +e^{-8 \kappa (\log h L_{k})^{\gamma}} \big]
\end{split}
\end{equation}
for an appropriate positive constant $\useconstant{c:general_recursion}$,
finishing the proof of \eqref{e:recgen1}.
To conclude, note that, increasing $\useconstant{c:general_recursion}$ if necessary, we can omit 
$p_{h L_k}(v_{\max})$ in \eqref{imp_relation_between_scales} when $v_{\max} > 4 \lambda$
since in this case $\cup_{m' \in \cI_m} A_{m'}(v_{\max}) \subset D_m^c$.
\end{proof}

Lemma~\ref{l:recgen} suggests that, in order to obtain recursive estimates for $p_{h L_k}$, $\tilde{p}_{h L_k}$,
we should work with sequences of speeds. This motivates our following definitions.

\newconstantk{k:k_to_speeds}

Consider first deviations above $v_+$.
Given $v>v_{+}$, take $\useconstantk{k:k_to_speeds}= \useconstantk{k:k_to_speeds}(v)\geq 1$ so that
\begin{equation*}
\sum_{k\geq \useconstantk{k:k_to_speeds}}\frac{1}{k^2}\leq \frac{v-v_{+}}{2}. 
\end{equation*}
Then, define recursively
\begin{equation}
\label{e:v_k}
v_{\useconstantk{k:k_to_speeds}}=\frac{v+v_{+}}{2} \text{ }\text{ and } \text{ } v_{k+1}=  v_{k} +\frac{1}{k^2}, \text{ for every $k\geq \useconstantk{k:k_to_speeds}$}.
\end{equation}
It follows that $(v_k)_{k\geq \useconstantk{k:k_to_speeds}}$ is an increasing sequence with limit $v_{\infty}\leq v$. In particular, $v_{k} \in (v_{+}, v]$ for every $k\geq \useconstantk{k:k_to_speeds}$.

Let us now consider deviations below $v_-$.
Given $\tilde{v} <v_{-}$ fix $\tilde{\useconstantk{k:k_to_speeds}} \in \N$ such that
\begin{equation*}
\sum_{k\geq \tilde{\useconstantk{k:k_to_speeds}}}\frac{1}{k^2}\leq \frac{v_{-}-\tilde{v}}{2} 
\end{equation*}
and recursively define
\begin{equation}
\label{e:v_k_tilde}
\tilde{v}_{\useconstantk{k:k_to_speeds}}=\frac{\tilde{v}+v_{-}}{2}, \quad \tilde{v}_{k+1}=  \tilde{v}_{k} -\frac{1}{k^2}, \text{ for every $k\geq \tilde{\useconstantk{k:k_to_speeds}}$}.
\end{equation}
The sequence $(\tilde{v}_k)_{k\geq \tilde{\useconstantk{k:k_to_speeds}}}$ decreases towards $\tilde{v}_{\infty}\geq \tilde{v}$. 
so that $\tilde{v}_{k} \in [\tilde{v}, v_{-})$, $k\geq \tilde{\useconstantk{k:k_to_speeds}}$.

Next we apply Lemma~\ref{l:recgen} to derive recursive bounds on $p_{hL_k}(v_k)$ and $\tilde{p}_{hL_k}(\tilde{v}_k)$.

\newconstantk{k:turbo_charger}

\begin{lemma}\label{recursive_estimate}
Fix $v>v_{+}$ and let $(v_k)$ be the sequence defined in \eqref{e:v_k}. 
There exists a positive integer $\useconstantk{k:turbo_charger}= \useconstantk{k:turbo_charger}(v)$ such that, for all $k\geq \useconstantk{k:turbo_charger}$ and all $h\geq1$,
\begin{equation}\label{Eqt:recursive_estimate}
\text{if } \, p_{hL_{k}}(v_k)\leq e^{-4\kappa \log^{\gamma} L_{k}}\,\, \text{ then } \,\, p_{hL_{k+1}}(v_{k+1})\leq e^{-4\kappa \log^{\gamma} L_{k+1}}.
\end{equation}
Analogously, if $\tilde{v}<v_{-}$ and $(\tilde{v}_k)$ is the sequence defined in \eqref{e:v_k_tilde}, there exists a positive integer 
$\tilde{\useconstantk{k:turbo_charger}}= \tilde{\useconstantk{k:turbo_charger}}(v)$ such that, for all $k\geq \tilde{\useconstantk{k:turbo_charger}}$ and all $h\geq1$,
\begin{equation}\label{Eqt:recursive_estimate_tilde}
\text{ if }\, \tilde{p}_{hL_{k}}(\tilde{v}_k)\leq e^{-4\kappa \log^{\gamma} L_{k}}\,\, \text{ then } \,\,\tilde{p}_{hL_{k+1}}(\tilde{v}_{k+1})\leq e^{-4\kappa\log^{\gamma} L_{k+1}}.
\end{equation}
\end{lemma}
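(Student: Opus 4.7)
My plan is to prove \eqref{Eqt:recursive_estimate} by a direct application of Lemma~\ref{l:recgen}; the case \eqref{Eqt:recursive_estimate_tilde} will follow by the same argument. Fix $v > v_+$, and choose a constant $v_{\max} = v_{\max}(v) > 4\lambda$ with $v_{\max} > v$ (so that $v_{\max} > v_k$ for all $k \geq \useconstantk{k:k_to_speeds}$ since $v_k \leq v$). The plan is to apply Lemma~\ref{l:recgen} with $v_{\min} = v_k$ and this choice of $v_{\max}$. Since $v_{\max} > 4\lambda$, the term $p_{hL_k}(v_{\max})$ can be dropped, so
\begin{equation*}
p_{hL_{k+1}}\!\Big(v_k + \tfrac{v_{\max}-v_k}{\sqrt{\ell_k}}\Big) \;\leq\; \useconstant{c:general_recursion}\, \ell_k^4 \Big\{ p_{hL_k}(v_k)^2 + e^{-8\kappa (\log hL_k)^\gamma} \Big\}.
\end{equation*}

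The first step is to arrange that $v_{k+1}$ lies on the right of $v_k + (v_{\max}-v_k)/\sqrt{\ell_k}$, so that by the monotonicity of $v \mapsto p_H(v)$ the left-hand side above upper-bounds $p_{hL_{k+1}}(v_{k+1})$. Since $v_{k+1}-v_k = 1/k^2$ and $v_{\max}-v_k$ is bounded above by a constant depending only on $v$, this amounts to requiring $\sqrt{\ell_k} \geq k^2 (v_{\max}-v_k)$, which holds for all $k$ sufficiently large because $\ell_k \sim L_k^\nu$ grows super-exponentially in $k$. Plugging in the hypothesis $p_{hL_k}(v_k) \leq e^{-4\kappa \log^\gamma L_k}$ and using that $h \geq 1$ so $(\log hL_k)^\gamma \geq \log^\gamma L_k$, we obtain
\begin{equation*}
p_{hL_{k+1}}(v_{k+1}) \;\leq\; 2\useconstant{c:general_recursion}\, \ell_k^4\, e^{-8\kappa \log^\gamma L_k}.
\end{equation*}

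The main task is then to verify that the right-hand side is bounded by $e^{-4\kappa \log^\gamma L_{k+1}}$. Taking logarithms, this reduces to
\begin{equation*}
\log(2\useconstant{c:general_recursion}) + 4\log \ell_k \;\leq\; 8\kappa \log^\gamma L_k - 4\kappa \log^\gamma L_{k+1}.
\end{equation*}
Using $\log \ell_k \leq \nu \log L_k$ and $\log L_{k+1} \leq (1+\nu) \log L_k$ from \eqref{increasing_L_k}, the right-hand side is at least $\bigl( 8 - 4(1+\nu)^\gamma \bigr) \kappa \log^\gamma L_k$. The key observation is that the choice \eqref{e:choice_nu} of $\nu$ gives $(1+\nu)^\gamma \leq (7/6)^{1/3} < 2$, so the coefficient $8 - 4(1+\nu)^\gamma$ is strictly positive. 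Since $\gamma > 1$, the term $\log^\gamma L_k$ dominates $4\nu \log L_k$ and the additive constant $\log(2\useconstant{c:general_recursion})$ for all $k$ sufficiently large, and the inequality follows. Taking $\useconstantk{k:turbo_charger}(v)$ large enough to accommodate both this condition and the earlier one $\sqrt{\ell_k} \geq k^2(v_{\max}-v_k)$ (as well as $\useconstantk{k:turbo_charger} \geq \useconstantk{k:k_to_speeds} \vee \useconstantk{k:scale_to_boxes}$) completes the argument. The analogous statement \eqref{Eqt:recursive_estimate_tilde} is obtained identically: one applies \eqref{e:recgen2} with $\tilde{v}_{\max} = \tilde{v}_k$ and $\tilde{v}_{\min} \leq v_\star$ (so the term $\tilde{p}_{hL_k}(\tilde{v}_{\min})$ may be dropped), and the same chain of estimates carries through.

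The main obstacle is the delicate exponent-bookkeeping in the last step: one must verify that the combinatorial factor $\ell_k^4$ and the slight deterioration of the exponent when passing from $L_k$ to $L_{k+1}$ do not destroy the decay rate, and it is precisely the constraint \eqref{e:choice_nu} on $\nu$ together with $\gamma > 1$ that makes this balance work.
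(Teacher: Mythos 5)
Your proof is correct and follows essentially the same path as the paper: apply Lemma~\ref{l:recgen} with $v_{\min}=v_k$ and a large $v_{\max}>4\lambda$ so the term $p_{hL_k}(v_{\max})$ can be dropped, use monotonicity to replace $\bar{v}$ by $v_{k+1}$ once $\sqrt{\ell_k}$ is large enough, and then do the exponent bookkeeping with $L_{k+1}\le L_k^{1+\nu}$ and $(1+\nu)^\gamma<2$. The only (minor, harmless) difference is that the paper fixes $v_{\max}=5\lambda$, whereas you allow $v_{\max}$ to depend on $v$, which actually covers the (vacuous) edge case $v>5\lambda$ more cleanly.
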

\begin{proof}

We will only prove \eqref{Eqt:recursive_estimate} since the proof of \eqref{Eqt:recursive_estimate_tilde} is similar.
Let $v > v_{+}$ and consider the sequence defined in \eqref{e:v_k}. 

Note that \eqref{e:choice_nu} implies $2>(1+\nu)^{\gamma}$ and fix an integer $\useconstantk{k:turbo_charger} \geq \useconstantk{k:scale_to_boxes} \vee \useconstantk{k:k_to_speeds}$ such that
\begin{equation}\label{e:cond_turbocharger}
5\lambda k^2 < \sqrt{\ell_k} \quad \text{ and } \quad 2\useconstant{c:general_recursion}\ell_k^4e^{- [2-(1+\nu)^{\gamma}] 4 \kappa \log^{\gamma} L_{k}} \leq 1 \quad \text{for every } k \geq \useconstantk{k:turbo_charger},
\end{equation}
Fix $h \geq 1$, $k \geq \useconstantk{k:turbo_charger}$ and
assume that $p_{hL_{k}}(v_k) \leq e^{-4\kappa \log^{\gamma} L_{k}}$. 
Let $v_{\min} = v_k$, $v_{\max} = 5 \lambda$ and note that $\bar{v} = v_{\min} +(v_{\max} - v_{\min})/\sqrt{\ell_k} < v_{k+1}$ by \eqref{e:cond_turbocharger}.
Thus \eqref{e:recgen1} yields
\begin{equation*}
p_{hL_{k+1}}(v_{k + 1}) \leq p_{hL_{k+1}}(\bar{v}) \leq 2 \useconstant{c:general_recursion}\ell_k^4 e^{-8 \kappa\log^{\gamma} L_{k}}
\end{equation*}
where the first inequality holds by monotonicity.
Now note that, since $L_{k+1} \leq L_k^{1+\nu}$,
\begin{equation*}
\frac{p_{hL_{k+1}}(v_{k + 1})}{e^{-4\kappa\log^{\gamma} L_{k+1}}} \leq 2\useconstant{c:general_recursion}\ell_k^4e^{- [2-(1+\nu)^{\gamma}] 4\kappa \log^{\gamma} L_{k}} \leq 1.
\qedhere
\end{equation*}
\end{proof}

We are now ready to prove Lemma \ref{trigger}. 
The proof given next shows that the scaling parameter $h$ serves as a means to \emph{trigger} the use of Lemma~\ref{recursive_estimate}, 
i.e., the choice $h= {\useconstant{c:h_trigger}}$ with suitable ${\useconstant{c:h_trigger}}$ guarantees a desired bound on an initial scale which is then recursively transported to all higher scales by the lemma.
\begin{proof}[Proof of Lemma \ref{trigger}]
Fix $v >v_{+}$ and let $\useconstantk{k:after_turbo_charger}=\useconstantk{k:turbo_charger}(v)$. 
Recall \eqref{e:v_k}.
Since $v_{\useconstantk{k:after_turbo_charger}} > v_{+}$, we have
$\liminf_{h \to \infty} p_{hL_{\useconstantk{k:after_turbo_charger}}} (v_{\useconstantk{k:after_turbo_charger}}) = 0$.
Therefore, we can fix $\useconstant{c:h_trigger}= \useconstant{c:h_trigger}(v)\geq 1$ such that 
\begin{equation}\label{initial_eq_to trigger}
 p_{\useconstant{c:h_trigger}L_{\useconstantk{k:after_turbo_charger}}} (v_{\useconstantk{k:after_turbo_charger}}) \leq
 e^{-4\kappa\log^{\gamma} L_{\useconstantk{k:after_turbo_charger}}}.
\end{equation}
Using that $v\geq v_{k}$ and iterating \eqref{initial_eq_to trigger} through Lemma \ref{recursive_estimate}, we obtain
\begin{equation*}
p_{\useconstant{c:h_trigger}L_{k}} (v)\leq p_{\useconstant{c:h_trigger}L_{k}} (v_{k})\leq
 e^{-4\kappa\log^{\gamma} L_{k}} \quad \text{for all } k\geq \useconstantk{k:after_turbo_charger}
\end{equation*}
as claimed. The proof of~\eqref{eq_trigger_tilde} is completely analogous. 
\end{proof}

\subsection{Proof of Proposition \ref{prop:decay_v+-}}
\label{ss:interp}

Lemma~\ref{trigger} only bounds $p_H$, $\tilde{p}_H$ for $H = \useconstant{c:h_trigger}L_ {k}$.
We extend next these bounds to $H \geq 1$ with an interpolation argument, finishing the proof of Proposition~\ref{prop:decay_v+-}.

\begin{proof}[Proof of  Proposition \ref{prop:decay_v+-}]
We will only prove the inequality involving $v_{+}$ in \eqref{e:decay_v+-}, as the other inequality is analogous.
Given $\varepsilon > 0$, let $v=v_{+}+\varepsilon$, $v^{\prime}=v_+ + \varepsilon/2$ and fix $\useconstant{c:h_trigger}= \useconstant{c:h_trigger}(v^\prime)$, $\useconstantk{k:after_turbo_charger}=\useconstantk{k:after_turbo_charger}(v^\prime)$ as in Lemma \ref{trigger}. 
For $H \geq 1$, we define $\overline{k}$ as the unique integer such that
\begin{equation}
\label{e:interpolation_H}
\useconstant{c:h_trigger}L_{\overline{k}+1}\leq H < \useconstant{c:h_trigger} L_{\overline{k}+2}.
\end{equation}
Assume initially that $H$ is so large that
\begin{equation}\label{condition_to_H_large}
\overline{k}\geq \useconstantk{k:after_turbo_charger}, \quad \frac{1}{\ell_{\overline{k}}} \leq \frac{v - v^{\prime}}{3\lambda} 
\quad \text{ and } \quad \useconstant{c:h_trigger} L_{\overline{k}+2} \leq L_{\overline{k}+2}^{1+\nu}.
\end{equation}
The first condition together with Lemma~\ref{trigger} gives
\begin{equation}\label{eq_trigger_2}
p_{\useconstant{c:h_trigger}L_{\overline{k}}}(v^\prime)\leq e^{-4\kappa \log^{\gamma} L_{\overline{k}}}.
\end{equation}
To keep $X^y$, $y \in I_H(w)$ inside $B_{H}(w)$ for time $H$, define the set, for $w=(x,t)$,
\begin{equation*}
\cC_{H}(w)= \left\{ (z, s) \in B_H(w) \colon\, z \in \Z, s-t \in  \useconstant{c:h_trigger} L_{\overline{k}} \N_0 \right\}
\end{equation*}
as well as the event
\begin{equation*}
\hat{D}_{H}(w)= \bigcap_{y \in \cC_{H}(w)} \Big\{\sup_{s \in [0,\useconstant{c:h_trigger}L_{\overline{k}}]}| X^y_{s}-\pi_1(y)|\leq 4 \lambda \useconstant{c:h_trigger} L_{\overline{k}}\Big\},
\end{equation*}
which is similar to $\hat{D}_m$ in \eqref{e:rw_inside_Bm}. A computation as for \eqref{event_one} yields
\begin{equation}\label{Eq:inter_large_deviation}
\PP\big[ \hat{D}_{H}(w)^{c} \big]\leq 18\lambda \useconstant{c:h_trigger} L_{\overline{k}}^{2(1+\nu)^{2}}e^{-2\lambda \useconstant{c:h_trigger}L_{\overline{k}}},
\end{equation}
where we also used \eqref{e:interpolation_H} and $L_k \leq L_{k+1} \leq L_k^{1+\nu}$ for every $k$.

Define now $H^{\prime}=\lfloor H / \useconstant{c:h_trigger} L_{\overline{k}} \rfloor \useconstant{c:h_trigger} L_{\overline{k}}$ 
and two events
\begin{equation}\label{interpolation_event_1}
\mathcal{E}_1 := \big\{\exists \, y \in I_{H}(w)\cap \mathbb{L} : X^{y}_{H^\prime} - \pi_1(y) \geq v^{\prime} H \big\},
\end{equation}
\begin{equation}
\label{interpolation_event_2}
\mathcal{E}_2 := \big\{\exists \, y^{\prime} \in \left( w+[-4\lambda H, 5\lambda H) \times \{H^\prime\} \right) \cap \mathbb{L} : X^{y^{\prime}}_{H-H^\prime} - \pi_1(y^\prime) \geq \left(v-v^{\prime}\right)H \big\}.
\end{equation}
Note that $A_{H,w}(v)\cap \hat{D}_{H}(w) \subset \mathcal{E}_1 \cup \mathcal{E}_2$.
To bound the probability of $\mathcal{E}_1$, fix $w \in \R \times \R_+$ and cover $B_{H}(w)$ with boxes $B_m$ with indices $m$ in the set
\begin{equation*}
\begin{split}
M = \Bigg\{ & \left(\useconstant{c:h_trigger}, \overline{k}, w+\left(x \useconstant{c:h_trigger}\lambda L_{\overline{k}},\text{ } y \useconstant{c:h_trigger} L_{\overline{k}}\right)\right)\colon\\ 
& -\left\lceil \frac{4H}{\useconstant{c:h_trigger} L_{\overline{k}}}\right\rceil \leq x \leq \left\lceil \frac{5H}{\useconstant{c:h_trigger} L_{\overline{k}}}\right\rceil, \,\, 0\leq y \leq \left\lceil \frac{H}{\useconstant{c:h_trigger} L_{\overline{k}}}\right\rceil, \,\, x, y \in \mathbb{Z} \Bigg\}.
\end{split}
\end{equation*}
It is straightforward to verify that
$|M|\leq 36 L_{\overline{k}}^{2(1+\nu)^{2}}$.
Now note that, on the event $\bigcap_{m \in M}\left( A_{m}(v^\prime)\right)^{c}$, for any $y \in I_{H}(w)\cap \mathbb{L}$ we have
\begin{equation*}
\begin{split}
X^{y}_{H^\prime}-\pi_{1}(y) & = \sum_{j=0}^{\lfloor H / \useconstant{c:h_trigger} L_{\overline{k}} \rfloor  -1}\Big[X^{Y^{y}_{j \useconstant{c:h_trigger} L_{\overline{k}}}}_{\useconstant{c:h_trigger} L_{\overline{k}}}-X^{y}_{j \useconstant{c:h_trigger} L_{\overline{k}}}\Big] < \lfloor H / \useconstant{c:h_trigger} L_{\overline{k}} \rfloor v^{\prime}\useconstant{c:h_trigger} L_{\overline{k}} = v^{\prime}H^{\prime}\leq v^{\prime}H,
\end{split}
\end{equation*}
so that, by \eqref{eq_trigger_2},
\begin{equation*}
\PP[\mathcal{E}_1] \leq \PP \left[\cup_{m \in M} A_{m}(v^\prime) \right] \leq 36 L_{\overline{k}}^{2(1+\nu)^{2}} e^{-4\kappa \log^{\gamma} L_{\overline{k}}}.
\end{equation*}
To bound the probability of $\mathcal{E}_2$, 
note that, by \eqref{e:interpolation_H} and \eqref{condition_to_H_large}, 
\begin{equation*}
\useconstant{c:h_trigger} L_{\overline{k}} = \useconstant{c:h_trigger} \frac{L_{\overline{k}+1}}{\ell_{\overline{k}}} \leq \frac{H}{\ell_{\overline{k}}} \leq \frac{v - v^{\prime}}{3\lambda} H.
\end{equation*}
Since $H - H^\prime \leq \useconstant{c:h_trigger} L_{\overline{k}} $, Lemmas~\ref{l:bdincr} and \ref{l:PoiConc} imply
\begin{equation*}
\PP\left[X_{H-H^\prime} \geq (v-v^{\prime})H\right] \leq \PP\left[ X_{H-H^\prime} \geq 3\lambda \useconstant{c:h_trigger} L_{\overline{k}} \right] \leq e^{-  \lambda\useconstant{c:h_trigger} L_{\overline{k}}},
\end{equation*}
which together with a union bound and translation invariance leads to
\begin{equation*}
\PP[\mathcal{E}_2] \leq 9 \lambda H e^{ - \useconstant{c:h_trigger} \lambda L_{\overline{k}} } \leq 9 \useconstant{c:h_trigger} \lambda L_{\overline{k}}^{(1+\nu)^2} e^{ - \useconstant{c:h_trigger} \lambda L_{\overline{k}} }.
\end{equation*}
Gathering all these bounds and recalling $A_{H,w}(v)\cap \hat{D}_{H}(w) \subset \mathcal{E}_1 \cup \mathcal{E}_2$ we obtain
\begin{equation*}
\begin{split}
\PP & \big[A_{H,w}(v)\big] \leq \PP[\hat{D}_H^c] + \PP[\mathcal{E}_1]+ \PP[\mathcal{E}_2] \\
& \leq 
18\lambda \useconstant{c:h_trigger} L_{\overline{k}}^{2(1+\nu)^{2}}e^{-2\lambda \useconstant{c:h_trigger}L_{\overline{k}}}
+ 36 L_{\overline{k}}^{2(1+\nu)^{2}}e^{-4 \kappa \log^{\gamma} L_{\overline{k}}} + 9 \useconstant{c:h_trigger} \lambda L_{\overline{k}}^{(1+\nu)^2} e^{ - \useconstant{c:h_trigger} \lambda L_{\overline{k}} } \\
& \leq \useconstant{c:c_accommodate}e^{-\tfrac72 \kappa \log^{\gamma} L_{\overline{k}}} 
\leq \useconstant{c:c_accommodate}e^{- 3\kappa \log^{\gamma} H}
\end{split}
\end{equation*}
for some positive constant $\useconstant{c:c_accommodate}$, 
where for the last inequality we use \eqref{e:interpolation_H}, \eqref{condition_to_H_large} and \eqref{e:choice_nu} to obtain
$6(\log H)^\gamma \leq 6(1+\nu)^{3\gamma} (\log L_{k})^\gamma \leq 7 (\log L_{k})^\gamma$.

To conclude, take the supremum over $w$ and increase the constant $\useconstant{c:c_accommodate}$ if necessary to accommodate for smaller values of $H$.
\end{proof}

To finish the section, we show how Proposition~\ref{prop:decay_v+-}
implies that $v_{-} \leq v_{+}$.
The argument is contained in \cite[Corollary~3.3]{HKA19} but is simpler in our setting.
\begin{proof}[Proof of  Corollary \ref{cv+mv-}]
First note that, if $v_ {1} < v_{2}$ and $H>0$,
$p_{H}(v_{1}) + \tilde{p}_{H}(v_{2}) \geq 1$.
Assume by contradiction that $v_{-}>v_{+}$ and let $\varepsilon = \tfrac{1}{4}(v_{-} - v_{+})$. 
By Proposition~\ref{prop:decay_v+-}, $\lim_{H \to \infty} p_{H}(v_{+}+\varepsilon) + \tilde{p}_{H}(v_{-}-\varepsilon) = 0$,
but this is impossible since $v_+ + \varepsilon < v_- - \varepsilon$.
\end{proof}

\section{Proof of Proposition \ref{v+=v-}}
\label{s:v-=v+}

To prove Proposition~\ref{v+=v-}, we proceed by contradiction.
Assume $v_-\neq v_+$. By Corollary~\ref{cv+mv-}, this means $v_{-}<v_{+}$,
so we can define
\begin{equation}\label{theta}
\theta = \frac{v_{+}-v_{-}}{6} \in \big(0, \lambda / 6 \big)
\end{equation}
by Proposition~\ref{p:bounds_to_vs}. We follow the argument from \cite{blondel2020random, HKA19}, which is divided in two parts.
In Section~\ref{ss:trap_threatened}, we define trapped and threatened points, 
which are loosely speaking space-time locations that introduce delays
in the random walk path. We also show that any given point is threatened with high probability.
In Section~\ref{ss:proof_sharpness}, we use the existence of many threatened points to show $\liminf p_H(v)=0$ for a $v$ strictly less than $v_{+}$.
This provides a contradiction with the definition of $v_+$, implying $v_-=v_+$.

\subsection{Traps and threatened points}
\label{ss:trap_threatened}

We start this section with the following definition:

\begin{definition}
\label{d:trapped}
  Given $K \geq 1$ and $\theta$ as in \eqref{theta}, we say that a point $w \in \R \times \R_+$ is 
  \emph{$K$-trapped} if there exists some $y \in \big(w + [\theta K, 2 \theta K] \times \{0\} \big) \cap \mathbb{L}$ such that
  \begin{equation}\label{trappedpoint}
  X^y_K - \pi_1(y) \leq (v_- + \theta)K.
  \end{equation}
\end{definition}
\noindent
Note that Definition~\ref{d:trapped} applies to points $w \in \mathbb{R} \times \mathbb{R}_+$ not necessarily in $\mathbb{L}$. See Figure~\ref{fig:trapped_point} for an illustration of a $K$-trapped point.

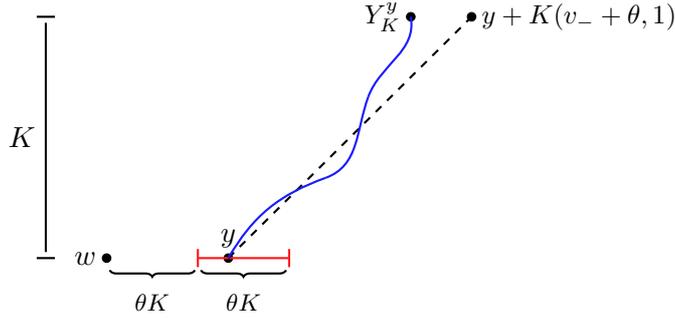
\begin{figure}
\begin{center}
\begin{tikzpicture}[scale=0.8]


\node[left] at (1,0) {$w$};
\node[above] at (3,0) {$y$};
\node[left] at (6,4) {\small{$Y^{y}_{K}$}};
\node[right] at (7,4) {\small{$y+K(v_{-}+\theta, 1)$}};
\node[left] at (0,2) {$K$};

\filldraw (1,0) circle (2pt);
\filldraw (3,0) circle (2pt);
\filldraw (6,4) circle (2pt);
\filldraw (7,4) circle (2pt);

\draw[red!90!][-, thick]  (2.5,0) -- (4,0);
\draw[-, thick]  (0,0.1) -- (0,3.9);
\draw[-, thick]  (-0.15,0) -- (0.15,0);
\draw[-, thick]  (-0.15,4) -- (0.15,4);
\draw[dashed, thick]  (3,0) -- (7,4);
\draw[red!90!][-, thick]  (2.5, -0.15) -- (2.5, 0.15);
\draw[red!90!][-, thick]  (4,-0.15) -- (4,0.15);
\draw [blue!90!] [-,thick] (3,0) to[out=60, in=200] (4.6,1.33);
\draw [blue!90!] [-,thick] (4.6,1.33) to[out=20, in=250] (5.3,2.67);
\draw [blue!90!] [-,thick] (5.3,2.67) to[out=70, in=280] (6,4);
\draw [thick,decoration={brace,mirror},decorate] (2.55,-.2) -- (3.95,-.2) node[midway,below,yshift=-.17cm] {\footnotesize{$\theta K$}};
\draw [thick,decoration={brace,mirror},decorate] (1.05,-.2) -- (2.45,-.2) node[midway,below,yshift=-.17cm] {\footnotesize{$\theta K$}};

\end{tikzpicture}
\caption{\small{An illustration of a $K$-trapped point $w$.}}
\label{fig:trapped_point}
\end{center}
\end{figure}

If $w$ is $K$-trapped, then any random walk $X^{y'}$ starting at a point $y' \in \LL$ near and to the right of $w$ will suffer a delay in its average speed in relation to $v_+$ after time $K$. Indeed, by monotonicity, for every $y' \in \big(w + [0,  \theta K] \times \{0\} \big) \cap \mathbb{L}$,
\begin{equation*}
  X^{y'}_K - \pi_1(y') \leq X^y_K - \pi_1(y) + 2 \theta K \leq (v_- + 3 \theta) K = (v_+ - 3\theta)K,
\end{equation*}
where $y$ is a point as in \eqref{trappedpoint} so that $\pi_1(y)-\pi_1(y')\leq 2\theta K$.

The first step is to show that a point is $K$-trapped with uniformly positive probability. 

\newconstant{c:sometrapped}
\newconstant{c:Hlower}

\begin{lemma}
  \label{sometrapped}
  There exist constants $\useconstant{c:sometrapped} > 0$, $\useconstant{c:Hlower}> 4/\theta + 1$ such that, for any $K \geq \useconstant{c:Hlower}$,
  \begin{equation}
   \label{eqsometrapped}
 \inf_{w \in \R \times \R_+ } \PP \big[ \text{$w$ is $K$-trapped} \big] \geq \useconstant{c:sometrapped}.
  \end{equation}
\end{lemma}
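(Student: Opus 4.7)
The plan is to combine the definition of $v_-$, which guarantees a slow trajectory with positive probability in a large window, with a tiling and translation-invariance argument that localizes the slow trajectory to the smaller window $[\theta K, 2\theta K]$.

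First, since $v_- + \theta > v_-$, the definition \eqref{def_v_-} of $v_-$ gives $\liminf_{K \to \infty} \tilde{p}_K(v_- + \theta) > 0$; hence there exist $c_0 > 0$ and $K_0 \in \mathbb{N}$ with $\tilde{p}_K(v_-+\theta) \geq c_0$ for every $K \geq K_0$. Next, I will introduce a block of integer starting positions designed to sit inside the trapped window. Setting $m := \lfloor \theta K \rfloor - 1$, for each $w \in \R \times \R_+$ I consider the event
\[
F(w) := \Big\{ \exists\, n \in \{\lceil \pi_1(w) + \theta K \rceil, \ldots, \lceil \pi_1(w) + \theta K \rceil + m - 1\} : X^{(n,\pi_2(w))}_K - n \leq (v_- + \theta) K \Big\}.
\]
For $\theta K > 4$ the block fits entirely inside $[\pi_1(w) + \theta K, \pi_1(w) + 2\theta K]$, so $F(w) \subseteq \{w \text{ is } K\text{-trapped}\}$. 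The crucial point is that the $\LL$-translation invariance of $(\eta, \Pi)$ together with the time-stationarity of $\eta$ (consequence of (E2) with $x = 0$) allows one to shift the spatial starting positions by $-\lceil \pi_1(w) + \theta K \rceil \in \Z$ and the time origin from $\pi_2(w)$ back to $0$, which shows that $\PP[F(w)]$ equals a constant $p_K^*$ depending only on $m$ and $K$, independent of $w$.

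To bound $p_K^*$ from below, I will tile the integer points of $I_K(w')$, for an arbitrary $w' \in \R \times \R_+$, into $J := \lceil \lambda K/m \rceil + 1$ consecutive blocks of $m$ integers. By the same translation-invariance argument, the probability that any given block yields a slow starting position equals $p_K^*$. A union bound then yields $\tilde{p}_K(v_-+\theta) \leq J p_K^*$; combined with $\tilde{p}_K(v_-+\theta) \geq c_0$, this gives $p_K^* \geq c_0/J$. Since $m \geq \theta K/2$ for $\theta K > 4$, we have $J \leq 2\lambda/\theta + 2$, and the lemma follows with $\useconstant{c:sometrapped} := c_0/(2\lambda/\theta + 2)$ and $\useconstant{c:Hlower} := \max\{K_0, 4/\theta + 2\}$.

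The main technical obstacle is the passage from the supremum-type estimate $\tilde{p}_K(v_- + \theta) \geq c_0$ (intrinsic to the definition of $v_-$) to an infimum-type bound valid for every $w$. The resolution lies in the careful definition of $F(w)$ through the ceiling function: by indexing the block of integer starting positions via an integer offset from a reference point that shifts with $w$, one makes the block intrinsically translation-invariant. A naive definition using the real endpoints of the trapped window would produce a set of integer starts whose cardinality (and hence whose probability of yielding a slow start) depends on $\{\pi_1(w)\}$, and this dependence would be difficult to control.
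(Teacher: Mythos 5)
Your proposal is correct and follows essentially the same approach as the paper: both arguments extract a uniform lower bound $\tilde{p}_K(v_- + \theta) \geq c_0$ from the definition of $v_-$, tile the interval $I_K$ into $O(\lambda/\theta)$ blocks of length $\asymp \theta K$, apply a union bound so that some block must carry probability at least $c_0/J$, and then use translation invariance to move that block into the trapped window $w + [\theta K, 2\theta K]$. The only difference is cosmetic in how the supremum-to-infimum passage is handled: you index the block by an integer offset via the ceiling function so that $\PP[F(w)]$ is manifestly $w$-independent, while the paper uses a nesting-of-intervals observation (for $\theta K > 4$, $w + [0,\theta K/2) \times \{0\}$ is contained in every $w' + [0,\theta K) \times \{0\}$ with $w' \in [-1,0) \times \{0\}$).
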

\begin{proof}
We follow \cite[Lemma 5.2]{blondel2020random}.
First note that, since $\theta>0$,
\[
\useconstant{c:sometrapped} := \frac{1}{2} \left\lceil \frac{2}{\theta} \right\rceil^{-1} \liminf_{K \to \infty} \tilde{p}_K(v_-+\theta) > 0
\]
by the definition of $v_-$. Thus there exists $\useconstant{c:Hlower}> 4/\theta + 1$ such that
\[
\left\lceil \frac{2}{\theta} \right\rceil^{-1} \inf_{K \geq \useconstant{c:Hlower}} \tilde{p}_K(v_-+\theta) \geq \useconstant{c:sometrapped}.
\]
Now, if $K \geq \useconstant{c:Hlower}$,
\[
\begin{aligned}
\useconstant{c:sometrapped} 
& \leq \left\lceil \frac{2}{\theta} \right\rceil^{-1}
\sup_{w \in [0,1) \times \{0\}} 
\PP \left( 
\begin{array}{l}
\text{there exists } y \in (w + [0,K)\times \{0\})\cap \LL\\
\text{ such that } X^y_K - \pi_1(y) \leq (v_- + \theta) K
\end{array}
 \right) \\
 & \leq 
\sup_{w \in [0,1) \times \{0\}} 
\PP \left( 
\begin{array}{l}
\text{there exists } y \in (w + [0, \theta K/2)\times \{0\})\cap \LL\\
\text{ such that } X^y_K - \pi_1(y) \leq (v_- + \theta) K
\end{array}
 \right) \\
 & \leq 
\inf_{w \in [0,1) \times \{0\}} 
\PP \left( 
\begin{array}{l}
\text{there exists } y \in (w + [0, \theta K)\times \{0\})\cap \LL\\
\text{ such that } X^y_K - \pi_1(y) \leq (v_- + \theta) K
\end{array}
 \right) \\
& = 
\inf_{w \in [0,1) \times \{0\}} 
\PP \left( 
\begin{array}{l}
\text{there exists } y \in (w + [\theta K, 2\theta K)\times \{0\})\cap \LL\\
\text{ such that } X^y_K - \pi_1(y) \leq (v_- + \theta) K
\end{array}
 \right),
\end{aligned}
\]
where the steps are justified as follows: For the second inequality, 
divide the interval $[0,K)$ into segments of length $\theta K /2$, apply
the union bound and translation invariance.
For the third inequality, observe that, 
since $\theta K \geq \theta (4/\theta+1)>4$, for any $w \in [0,1) \times \{0\}$ the interval
$w + [0,\theta K/2) \times \{0\}$ is contained in every interval of the form $w'+[0,\theta K)\times \{0\}$
with $w' \in [-1,0) \times \{0\}$, and then apply translation invariance.
The last equality is again a consequence of translation invariance.
\end{proof}

From \eqref{eqsometrapped} it is intuitive to expect that there is a density of trapped points in $\R^2$.
It is difficult however to exclude the possibility that random walk trajectories might avoid these traps or spend a  small fraction of time near them. 
To deal with this, we introduce next \emph{threatened} points, which are a weaker type of trap but much more likely.

\begin{definition}
For $K \geq 1$ and $r \in \N$, we say that a point $w \in \R \times \R_+$ is \emph{$(K, r)$-threatened} if $w + j K (v_+, 1)$ is $K$-trapped for some $j = 0, \dots, r-1$.
\end{definition}

In words, a space-time point is threatened if a certain line segment with slope $v_{+}$ starting from it contains at least one trapped point (see Figure \ref{fig:rwfast}).
The advantage of threatened points is that they occur with high probability when $r$ is large, as will be shown in Lemma~\ref{threatenedpoints} below. 
Moreover, if a random walk starts from such a point, it will either be delayed with respect to $v_+$ after time $rK$ or attain average speed larger that $v_{+}$ in a time interval of length $K$. This is the content of the next result.

\begin{lemma}
\label{rwfast}
  For any $r \in \N$ and any $K\geq \useconstant{c:Hlower}$, if $y \in \LL$ is $(K, r)$-threatened then either
\begin{equation}\label{speedup}
      X_{(j + 1)K}^y - X_{jK}^y
      \geq \Big( v_+ + \frac{\theta}{2 r} \Big)K \quad \text{ for some $j = 0, \dots, r - 1$,}
\end{equation}
 or
\begin{equation}\label{delay}
      X^y_{rK} - \pi_1(y)
      \leq \Big( v_+ - \frac{\theta}{2 r} \Big) r K.
\end{equation}
\end{lemma}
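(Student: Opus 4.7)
The plan is to argue by contradiction: assume that \eqref{speedup} fails on every interval $[jK,(j+1)K]$ for $j=0,\dots,r-1$, and deduce \eqref{delay}. By the definition of $(K,r)$-threatened, I fix $j_0 \in \{0,\dots,r-1\}$ such that $w := y + j_0 K (v_+, 1)$ is $K$-trapped, with witness $y' \in (w + [\theta K, 2\theta K]\times\{0\}) \cap \LL$ satisfying
\[
X^{y'}_K - \pi_1(y') \leq (v_-+\theta)K.
\]

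The key idea is to transfer this delay from $X^{y'}$ to $X^y$ using monotonicity. Telescoping the failure of \eqref{speedup} on the intervals indexed by $j < j_0$ and subtracting $j_0 K v_+$ gives
\[
z_{j_0} := X^y_{j_0 K} \;\leq\; \pi_1(w) + \tfrac{j_0}{2r}\theta K \;\leq\; \pi_1(w) + \tfrac{\theta}{2} K \;<\; \pi_1(y'),
\]
so at time $j_0 K$ the walk sits strictly to the left of the witness start. Since the space-time points $(z_{j_0},\pi_2(w))$ and $y'$ share the time coordinate $\pi_2(w)$, the coalescence identity $X^y_{(j_0+1)K} = X^{(z_{j_0},\pi_2(w))}_K$ combined with Lemma~\ref{l:monot} and the trapping bound yields
\[
X^y_{(j_0+1)K} \;\leq\; X^{y'}_K \;\leq\; \pi_1(y') + (v_-+\theta)K \;\leq\; \pi_1(w) + (v_+-3\theta)K,
\]
where the last inequality uses $\pi_1(y') \leq \pi_1(w) + 2\theta K$ and $v_- + 3\theta = v_+ - 3\theta$ from \eqref{theta}.

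Telescoping the failure of \eqref{speedup} once more over the remaining intervals $j = j_0+1,\dots,r-1$ and summing everything (recalling $\pi_1(w) = \pi_1(y) + j_0 K v_+$) produces
\[
X^y_{rK} - \pi_1(y) \;<\; r K v_+ - 3\theta K + \tfrac{r-j_0-1}{2r}\theta K \;\leq\; rKv_+ - \tfrac{\theta}{2}K \;=\; rK\bigl(v_+ - \tfrac{\theta}{2r}\bigr),
\]
which is \eqref{delay}. The one subtle step is the spatial comparison $z_{j_0} < \pi_1(y')$ needed to invoke monotonicity; this is exactly why the definition of $K$-trapped builds in a horizontal buffer of $\theta K$, comfortably absorbing the at most $\theta K/2$ of excess advance that the walk can accumulate over the first $j_0$ intervals without triggering \eqref{speedup}.
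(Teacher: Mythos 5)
Your proof is correct and follows the standard approach (the paper defers to \cite[Lemma~5.4]{blondel2020random} without reproducing the argument). The three steps are all sound: telescoping the failure of \eqref{speedup} over $j<j_0$ gives $X^y_{j_0K}<\pi_1(w)+\tfrac{j_0}{2r}\theta K<\pi_1(w)+\theta K\leq\pi_1(y')$, so the coalescence property followed by monotonicity in the starting position yields $X^y_{(j_0+1)K}\leq X^{y'}_K\leq\pi_1(y')+(v_-+\theta)K\leq\pi_1(w)+(v_+-3\theta)K$, and a final telescoping over $j>j_0$ gives \eqref{delay} with room to spare (you actually obtain $rKv_+-\tfrac52\theta K$, which is more than what is needed). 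You also correctly identify the role of the $[\theta K,2\theta K]$ buffer in the trap definition as exactly what makes the monotonicity comparison valid; this is the heart of the argument.
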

\begin{proof}
See \cite[Lemma 5.4]{blondel2020random}.
\end{proof}

\begin{figure}
\begin{center}
\begin{tikzpicture}[scale=0.8]


\node[left] at (0,0) {$rK$};
\node[left] at (3,-4) {\small{$y$}};
\node[left] at (3.5,-3.5) {\scriptsize{$y+K(v_{+}, 1)$}};
\node[below] at (8.7,0.7) {\scriptsize{$y^{\prime}$}};
\node[right] at (11,4) {\scriptsize{$y+rK(v_{+}, 1)$}};
\node[left] at (9.9,4.1) {\small{$Y^{y}_{rK}$}};
\node[left] at (6.8,1.2) {\scriptsize{$y+j_{0}K(v_{+}, 1)$}};

\filldraw (3,-4) circle (2pt);
\filldraw (8.7,0.7) circle (2pt);
\filldraw (7.7,0.7) circle (2pt);
\filldraw (11,4) circle (2pt);
\filldraw (10,4) circle (2pt);


\draw[-, thick]  (0,-3.9) -- (0,3.9);
\draw[-, thick]  (-0.15,-4) -- (0.15,-4);
\draw[-, thick]  (-0.15,4) -- (0.15,4);
\draw[dashed, thick]  (3,-4) -- (11,4);
\draw[->, thick]  (3,-4) -- (3.5,-3.5);
\draw [blue!90!] [-,thick] (3,-4) to[out=100, in=250] (4,-3);
\draw [blue!90!] [-,thick] (4,-3) to[out=70, in=200] (6,-1);
\draw [blue!90!] [-,thick] (6,-1) to[out=20, in=260] (8,1);
\draw [blue!90!] [-,thick] (8,1) to[out=70, in=225] (10,4);
\draw [red!90!][-, thick]  (8.3,0.7) -- (9,0.7);
\draw [red!90!][-, thick]  (8.3,0.6) -- (8.3,0.8);
\draw [red!90!][-, thick]  (9,0.6) -- (9,0.8);
\draw [blue!90!] [-,thick] (8.7,0.7) to[out=80, in=245] (8.6,2);
\draw [->,thick] (7.7,0.7) to[out=125, in=345] (7,1.2);

\end{tikzpicture}
\caption{\small{Illustration for Lemma \ref{rwfast}. A random walk starting on a point that is $(K, r)$-threatened is likely to experience a delay in its average speed relative to $v_{+}$ after time $rK$.}}
\label{fig:rwfast}
\end{center}
\end{figure}
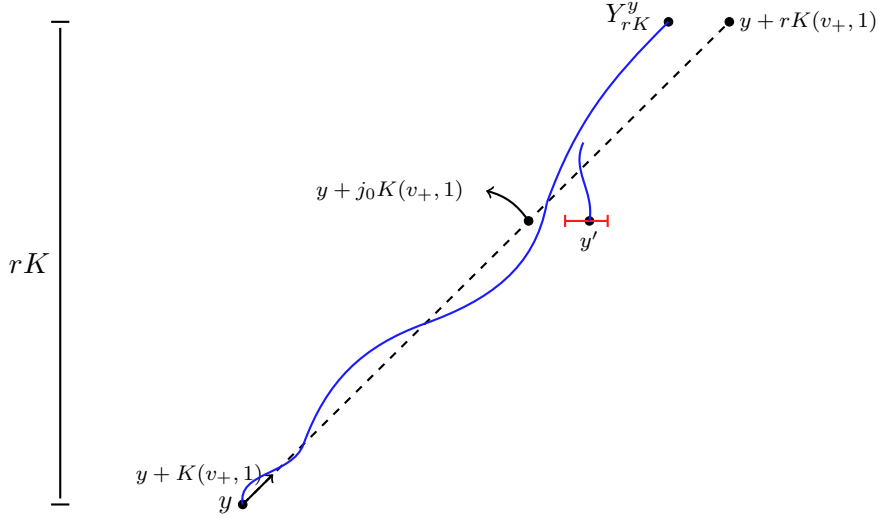

Next we show that points are threatened with overwhelming probability.
Recall the constants $\useconstant{c:sometrapped}, \useconstant{c:Hlower}$ from Lemma~\ref{sometrapped}.

\newconstant{c:highthreatened}
\newconstant{c:first_n_to_highthreatened}
\newconstant{c:final_threat_const}

\begin{lemma}\label{threatenedpoints}
There exists a constant $\useconstant{c:highthreatened}>0$ such that, for any $r \in \N$ and $K\geq \useconstant{c:Hlower}$, 
\begin{equation*}
\sup_{w\in \R \times \R_+}  \PP \big[ \text{w is not $(K, r)$-threatened} \big] \leq \useconstant{c:highthreatened} r^{-1000}.
\end{equation*}
\end{lemma}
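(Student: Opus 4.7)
The plan is to proceed by strong induction on $r$, combining a recursive splitting of the index set $\{0,1,\ldots,r-1\}$ with the decoupling inequality~\eqref{e:DEC}. Writing $T_j := \{w + jK(v_+,1) \text{ is } K\text{-trapped}\}$ and $q(r) := \sup_{w} \PP[\bigcap_{j=0}^{r-1} T_j^c]$, the goal reduces to $q(r) \leq \useconstant{c:highthreatened} r^{-1000}$.

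To apply~\eqref{e:DEC}, which requires rectangular supports, I would first truncate on a high-probability event. For $L \geq 2\lambda K$, let $G_j(L)$ be the event that $\sup_{s \in [0,K]} |X^y_s - \pi_1(y)| \leq L$ for every starting point $y$ in the interval $(w + jK(v_+, 1) + [\theta K, 2\theta K] \times \{0\}) \cap \LL$. By Lemma~\ref{l:bdincr} and Lemma~\ref{l:PoiConc}, $\PP[G_j(L)^c]$ has stretched-exponential tail in $L$, so choosing $L := C_L \log r$ with $C_L$ large makes $\PP[G_j(L)^c]$ smaller than any prescribed polynomial in $r$. Crucially, on $G_j(L)$ the indicator $\1_{T_j^c \cap G_j(L)}$ is supported (in the sense of Definition~\ref{d:support}) on a rectangle $B_j(L)$ of height $K$ and horizontal width $O(L+\theta K)$ around $w + jK(v_+,1)$.

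Since Theorem~\ref{t:maingeneral} assumes $v_\star > v_\circ$, Proposition~\ref{p:bounds_to_vs} gives $v_+ \geq v_\star > v_\circ$, so one can fix once and for all $\alpha \in (0,1/2)$ with $(1-2\alpha)(v_+ - v_\circ) > \usebigconstant{c:decoupling_2}\alpha$. For $r$ large, set $p := \lfloor \alpha r \rfloor$, $J_1 := \{0, \ldots, p-1\}$, $J_3 := \{r-p, \ldots, r-1\}$. Dropping the middle indices via $T_j^c \leq 1$ and passing to the good event yield
\begin{equation*}
\PP\Big[\bigcap_{j=0}^{r-1} T_j^c\Big] \leq \E\Big[\prod_{j \in J_1} \1_{T_j^c \cap G_j(L)} \cdot \prod_{j \in J_3} \1_{T_j^c \cap G_j(L)}\Big] + \sum_{j \in J_1 \cup J_3} \PP[G_j(L)^c].
\end{equation*}
The two products are supported on regions of the form $(-\infty, a] \times [0, pK]$ and $[c, \infty) \times [(r-p)K, rK]$ (translating so that $\pi_2(w)=0$), with $s = pK$, $\dist_V = (r-2p)K$ and $\dist_H = (r-2p+1)Kv_+ - O(L)$. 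Condition~\eqref{condition_to_decouple} then reduces to $[(1-2\alpha)(v_+ - v_\circ) - \usebigconstant{c:decoupling_2}\alpha]\, rK \geq 2L + O(1)$, which holds for $r$ large by the choice of $\alpha$. Applying~\eqref{e:DEC} (extended as in Remark~\ref{r:ext_decouple}) and translation invariance gives the recursion
\begin{equation*}
q(r) \leq q(\lfloor \alpha r \rfloor)^2 + C' r^{-2000},
\end{equation*}
the decoupling error being subpolynomial in $r$ because $\gamma_\circ > 1$ and $\dist_H = \Theta(rK)$.

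A strong induction then closes the argument: assuming $q(r') \leq (r')^{-1000}$ for $r' < r$ above some threshold $r^*$, the recursion gives $q(r) \leq \alpha^{-2000} r^{-2000} + C' r^{-2000} \leq r^{-1000}$ for $r$ sufficiently large, while small $r$ are absorbed into the constant $\useconstant{c:highthreatened}$. The main technical obstacle is verifying the decoupling condition uniformly in $r$: the term $\usebigconstant{c:decoupling_2} s$ grows linearly with $r$ since $s = pK = \Theta(rK)$, and the ballisticity slack $v_+ > v_\circ$ must be enough to absorb both $v_\circ \dist_V$ and $\usebigconstant{c:decoupling_2} s$ after balancing against $\usebigconstant{c:decoupling_2}\alpha$; this is precisely what the explicit choice of $\alpha$ accomplishes.
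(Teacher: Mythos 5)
Your recursion --- keeping the first and last blocks of trapping indices, discarding the middle, localizing on a good event, and applying the decoupling inequality --- is exactly the mechanism in the paper's proof; the only cosmetic difference is that you split proportionally ($p = \lfloor\alpha r\rfloor$, inducting directly on $r$), while the paper restricts to $r = L^n$ with $p = L^{n-1}$ (so $\alpha = 1/L$) and interpolates afterwards. Your reduction of the decoupling condition to a constraint of the form $[(1-2\alpha)(v_+-v_\circ)-\usebigconstant{c:decoupling_2}\alpha]\,rK \geq O(L)$ is right, modulo one repairable imprecision: the tail bound from Lemmas~\ref{l:bdincr} and~\ref{l:PoiConc} is $(\theta K+1)e^{2\lambda K-L}$, so uniformity in $K\geq\useconstant{c:Hlower}$ forces $L$ of order $K+\log r$ rather than $C_L\log r$ (compare the paper's box widths $3\lambda K + n^2$); this is harmless since $rK$ still dominates $K+\log r$.

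The serious gap is that the strong induction you describe does not close. If the hypothesis is $q(r')\leq C(r')^{-1000}$ for $r'<r$, the recursion yields $q(r)\leq C^2\alpha^{-2000}r^{-2000}+C'r^{-2000}$, and for this to be $\leq Cr^{-1000}$ you need $r^{1000}\geq C\alpha^{-2000}+C'/C$; on the other hand the base case (where you only know $q\leq 1$) forces $C\geq(r^*)^{1000}$ with $r^*$ the threshold, and combining these gives $(r^*)^{1000}\geq C\alpha^{-2000}>C\geq(r^*)^{1000}$ since $\alpha^{-2000}>1$ --- a contradiction. The variant you sketch, using $q(r')\leq(r')^{-1000}$ with no constant above $r^*$ and absorbing small $r$ into $\useconstant{c:highthreatened}$, fails for a related reason: for $r'$ just above $r^*$ you would need $q(r')\leq(r')^{-1000}$ from scratch, but Lemma~\ref{sometrapped} only gives $q(r')\leq 1-\useconstant{c:sometrapped}$, which is of constant order. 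Squaring an $O(1)$ starting value against an error of size $e^{-c(\log r)^{\gamma_\circ}}$ cannot land on a prescribed polynomial in a single pass. This is precisely why the paper first establishes the intermediate bound $q_n^K\leq a^{n^{\gamma_\bullet}}$ with $\gamma_\bullet\in(1,\gamma_\circ)$ and $a<1$ taken close to $1$ (so the base case $q_{n_0}^K\leq 1-\useconstant{c:sometrapped}\leq a^{n_0^{\gamma_\bullet}}$ holds); the squaring step is then consistent because $2n^{\gamma_\bullet}-(n+1)^{\gamma_\bullet}\geq 1$ and the decoupling error $e^{-cn^{\gamma_\circ}}$ is $o(a^{(n+1)^{\gamma_\bullet}})$, and the $r^{-1000}$ bound follows a posteriori since $(\log r)^{\gamma_\bullet}\gg\log r$. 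You need the same two-stage argument.
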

\begin{proof}
Recall the constants $v_\circ, \gamma_\circ, \usebigconstant{c:decoupling_2}, \usebigconstant{c:decoupling_3}$ in \eqref{e:DEC}
and $v_\star$ in \eqref{e:BAL}.
Since we are assuming $v_\star>v_\circ$, 
we may fix an integer $L\geq 3$ such that
\begin{equation}\label{e:Lthreat}
(L-2)(v_\star-v_\circ) \geq \usebigconstant{c:decoupling_2} + 1.
\end{equation}
Fix also $\gamma_\bullet \in (1,\gamma_\circ \wedge 2)$ and an integer $n_0 \geq 3$ such that
\begin{equation}\label{n_zero_high_threat}
2n^{\gamma_\bullet} - (n+1)^{\gamma_\bullet} \geq 1 \quad \text{ and } \quad L^{n} - 4n^2 - 8\lambda \geq 1 \vee \usebigconstant{c:decoupling_3} \;\; \forall\, n\geq n_{0}.
\end{equation}
Let us first prove the statement for $r$ of the form $r = L^n$, $ n\geq n_0$. In order to do so, for each $n\geq n_{0}$ and $K\geq 1$, 
consider the quantity
\begin{equation}\label{quantity_threat}
q^{K}_{n}= \sup_{w \in \R \times \R_+}\PP \big[ \text{$w$ is not $(K, L^n)$-threatened} \big]. 
\end{equation}
Given $w = (x,t) \in \R \times \R_+$, define the set
\begin{equation*}
\begin{split}
\cC^{K}_{n}=
\bigg\{ (z,s) \in \LL \colon\, 
&-n^2 - 4 \lambda K \leq z-x \leq L^{n_1} K v_+ +4\lambda K + n^2,\\
& s-t \in K \{0, \ldots, L^{n+1}-1 \}\bigg\}
\end{split}
\end{equation*}
and the event
\begin{equation*}
D^{K}_{n} = \bigcap_{z \in \cC^{K}_{n}}\Big\{\sup_{s \in [0,K]}|X_{s}^{z}-\pi(z)|\leq 3\lambda K + n^2\Big\}.
\end{equation*}
To bound the probability of $D^{K}_{n}$, first use $v_+ \leq \lambda$, $\lambda, K \geq 1$ and \eqref{n_zero_high_threat} to bound
\begin{equation*}
|\cC^{K}_{n}|\leq L^{n+1}\left( L^{n+1}Kv_+ + 2n^2 + 8\lambda K \right)\leq 2L^2 \lambda K L^{2n}.
\end{equation*}
By Lemma~\ref{l:PoiConc}, a union bound, translation invariance and $t e^{-t} \leq 1/2$,
\begin{equation}\label{second_error_threatened}
\PP [\left(D^{K}_{n}\right)^c]\leq 2L^2 \lambda K L^{2n} e^{-\lambda K} e^{-n^2} \leq L^{2n+2} e^{-n^2}.
\end{equation}
Define now $w_L = y + (L-1) L^nK(v_{+},1)$ and
\begin{equation*}
A_1= \{ \text{$w$ is not $(K, L^n)$-threatened}\}, \qquad A_2= \{ \text{$w_L$ is not $(K, L^n)$-threatened}\}.
\end{equation*}
Observe that 
\begin{equation}\label{inclusion_events_high_threatened}
\{\text{$w$ is not $(K, L^{n+1})$-threatened}\} \subset A_1\cap A_2.
\end{equation}
Furthermore, on $D^{K}_{n}$, occurrence of $A_{1}$, $A_{2}$ is determined by 
$\eta, \Pi$ inside boxes $B_1$, $B_2$ respectively, where
\begin{equation*}
B_1= w + [-n^2 - 4\lambda K, L^n K v_{+}+ n^2 + 4\lambda K]\times [0, L^n K],
\end{equation*}
\begin{equation*}
B_2= w_L + [-n^2 - 4\lambda K, L^n K v_{+}+ n^2 + 4\lambda K]\times [0, L^n K].
\end{equation*}
Denote by $\dist_{H}$, $\dist_{V}$, $s$ the horizontal/vertical distances and height of $B_1$, $B_2$.
Note that $\dist_{V}=(L-2)L^n K$ and $s= L^n K$. 
Using $v_{+} \geq v_{\star}$ and \eqref{e:Lthreat}--\eqref{n_zero_high_threat}, we obtain
\begin{equation}\label{condition_decoupling_high_threat}
\begin{split}
\dist_H &= (L - 2) L^{n} K v_{+} - 4n^2-8\lambda K\\
&\geq v_\circ \dist_V + (L-2)(v_\star - v_\circ) L^n K  - 4n^2-8\lambda K \geq v_\circ \dist_V + \usebigconstant{c:decoupling_2} s + \usebigconstant{c:decoupling_3}. 
\end{split}
\end{equation}
Thus we may apply \eqref{e:DEC} (and Remark~\ref{r:ext_decouple}), yielding
\begin{equation}\label{Aply_dec_threat}
\begin{split}
\PP \left[A_{1}\cap A_{2}\right]&= \PP \left[A_{1}\cap A_{2} \cap D^{K}_{n}\right]+ \PP \left[A_{1}\cap A_{2} \cap \left(D^{K}_{n}\right)^c\right]\\ 
&\leq \PP \left[A_{1}\right] \PP \left[A_{2}\right] + C_\circ e^{-\kappa_\circ\log^{\gamma_\circ}\dist_H} + 3 \PP \left[\left(D^{K}_{n}\right)^c\right]\\
&\leq \PP \left[A_{1}\right] \PP \left[A_{2}\right] + C_\circ e^{-\kappa_\circ\log^{\gamma_\circ}L^n} + 3L^{2n+2}e^{-n^2}
\end{split}
\end{equation}
where we used \eqref{second_error_threatened} and \eqref{e:Lthreat}--\eqref{n_zero_high_threat} again to obtain $d_H \geq L^n K \geq L^n$.
Hence,
\begin{equation}\label{relation_scales_threat}
q_{n+1}^K \leq \left(q_{n}^K\right)^2 + C_\circ e^{-\kappa_\circ\log^{\gamma_\circ} L^n} + 3L^{2n+2}e^{-n^2}.
\end{equation}
Recall the constant $\useconstant{c:sometrapped}>0$ from Lemma~\ref{sometrapped} and fix $0<a<1$ such that $a^{n_0^{\gamma_\bullet}} \geq (1- \useconstant{c:sometrapped})$. 
Fix also a constant $\useconstant{c:first_n_to_highthreatened}$ such that, for every $n \geq n_0$,
\begin{equation}\label{small_error_threat_1}
C_\circ \exp\left\{-\kappa_\circ(\log L^{\useconstant{c:first_n_to_highthreatened} + n})^{\gamma_\circ} -(n+1)^{\gamma_\bullet}\log a\right\}\leq \frac{1-a}{2}
\end{equation}
and
\begin{equation}\label{small_error_threat_2}
3 L^2 \exp\left\{2(\useconstant{c:first_n_to_highthreatened} + n)\log L - (\useconstant{c:first_n_to_highthreatened} +n)^2 -(n+1)^{\gamma_\bullet}\log a \right\}\leq \frac{1-a}{2}.
\end{equation}

We claim that if $K\geq \useconstant{c:Hlower}$ then
\begin{equation}
\label{e:claimthreat}
q^{K}_{\useconstant{c:first_n_to_highthreatened} + n}\leq a^{n^{\gamma_\bullet}} \quad \text{for all } n \geq n_{0}.
\end{equation}
Let us prove \eqref{e:claimthreat} by induction. 
Indeed, for $n=n_0$ and any $w \in \R \times \R_+$,
\begin{equation*}
\begin{split}
\PP \big[ \text{$w$ is not $(K, L^{\useconstant{c:first_n_to_highthreatened} + n_{0}})$-threatened} \big] 
&\leq \PP  \big[ \text{$w$ is not $K$-trapped} \big]\leq 1- \useconstant{c:sometrapped}
\leq a^{n_0^{\gamma_\bullet}}
\end{split}
\end{equation*}
by Lemma~\ref{sometrapped} and the definition of $a$.
Assume now that $q^{K}_{\useconstant{c:first_n_to_highthreatened} + n}\leq a^{n^{\gamma_\bullet}}$ for some $n\geq n_0$.
By \eqref{relation_scales_threat}, \eqref{small_error_threat_1}, \eqref{small_error_threat_2} and \eqref{n_zero_high_threat},
\begin{equation*}
\begin{split}
\frac{q_{\useconstant{c:first_n_to_highthreatened} + n + 1}^K}{a^{(n+1)^{\gamma_\bullet}}} 
& \leq a^{2n^{\gamma_\bullet} - (n+1)^{\gamma_\bullet}} + \frac{1-a}{2} + \frac{1-a}{2} \leq a + 1-a = 1,
\end{split}
\end{equation*}
finishing the proof of \eqref{e:claimthreat}.
Fix now $\useconstant{c:final_threat_const} \geq n_{0}$ such that $a^{\useconstant{c:final_threat_const}}\leq L^{-1000}$ and so
\begin{equation*}
q^{K}_{ \useconstant{c:first_n_to_highthreatened} + n}\leq (a^{n^{\gamma_\bullet-1}})^n \leq L^{-1000n} \qquad \text{for all } n\geq n_1 := \useconstant{c:final_threat_const}^{(\gamma_\bullet-1)^{-1}}.
\end{equation*}
For $r \geq L^{\useconstant{c:first_n_to_highthreatened} + n_1}$, let $\bar{n}$ be the unique integer such that $L^{\bar{n}}\leq r < L^{\bar{n}+1}$. 
Then
\begin{equation*}
\begin{split}
\sup_{w \in \R \times \R_+}\PP \big[ \text{$w$ is not $(K, r)$-threatened} \big]
& \leq q^{K}_{\useconstant{c:first_n_to_highthreatened} +(\bar{n} - \useconstant{c:first_n_to_highthreatened})}
\leq L^{1000(1+ \useconstant{c:first_n_to_highthreatened})}r^{-1000}.
\end{split}
\end{equation*}
To conclude, increase $\useconstant{c:highthreatened}$ if needed to accommodate smaller values of $r$.
\end{proof}

\subsection{Proof of Proposition \ref{v+=v-}}
\label{ss:proof_sharpness}

A first sign of trouble with the assumption $v_+ > v_-$ can be seen next.
\begin{lemma}\label{firstdelay}
For any $\varepsilon>0$, there exist $r=r(\varepsilon) \in \N$ and $H_0=H_0(\varepsilon) > 0$ such that
\begin{equation*}
p_H \Big(v_+ - \frac{\theta}{4r} \Big) \leq \varepsilon \quad \text{ for all } H \geq H_0.
\end{equation*}
\end{lemma}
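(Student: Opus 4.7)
The plan is to combine the dichotomy of Lemma~\ref{rwfast} with the deviation bound of Proposition~\ref{prop:decay_v+-}. Given $\varepsilon>0$, I will first pick $r \in \N$ large enough that $\useconstant{c:highthreatened} r^{-1000} \leq \varepsilon/3$, using Lemma~\ref{threatenedpoints}. For $H$ large I then set $K := H/r$, which is $\geq \useconstant{c:Hlower}$ once $H \geq r \useconstant{c:Hlower}$ (throughout Section~\ref{ss:trap_threatened} the parameter $K$ is allowed to be real-valued). Let $y_0$ be the leftmost point of $I_H(w) \cap \LL$. By the monotonicity-in-$y$ and coalescence properties of the construction, $X^y_H - \pi_1(y) \leq X^{y_0}_H - \pi_1(y_0)$ for every $y \in I_H(w) \cap \LL$, so the problem reduces to bounding
\[
\PP\Bigl[X^{y_0}_{rK} - \pi_1(y_0) \geq (v_+ - \tfrac{\theta}{4r}) rK\Bigr].
\]

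On the event $T$ that $y_0$ is $(K,r)$-threatened, Lemma~\ref{rwfast} forces either a \emph{delay}, $X^{y_0}_{rK} - \pi_1(y_0) \leq (v_+ - \theta/(2r))rK$, or a \emph{speedup}, namely $X^{y_0}_{(j+1)K} - X^{y_0}_{jK} \geq (v_+ + \theta/(2r))K$ for some $j \in \{0, \ldots, r-1\}$. A delay is incompatible with the displayed event (strictly, since $\theta/(4r) < \theta/(2r)$), so the displayed event intersected with $T$ is contained in the speedup event $S$; consequently
\[
\PP\bigl[A_{H,w}(v_+ - \tfrac{\theta}{4r})\bigr] \leq \PP[T^c] + \PP[S] \leq \varepsilon/3 + \PP[S],
\]
and it remains to show that $H_0$ can be chosen so that $\PP[S] \leq 2\varepsilon/3$ for $H \geq H_0$, uniformly in $w$.

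To bound $\PP[S]$, I would use coalescence to rewrite the speedup at step $j$ as $X^{Y^{y_0}_{jK}}_K - \pi_1(Y^{y_0}_{jK}) \geq (v_+ + \theta/(2r))K$ and then localize the random point $Y^{y_0}_{jK}$ via the good event $G := \{\sup_{s \in [0,H]} |X^{y_0}_s - \pi_1(y_0)| \leq 2\lambda H\}$; Lemmas~\ref{l:bdincr} and~\ref{l:PoiConc} bound $\PP[G^c]$ by $e^{-c \lambda H}$, which is $\leq \varepsilon/3$ for $H$ large. On $G$, each $Y^{y_0}_{jK}$ lies in a horizontal segment of length $4\lambda H = 4\lambda rK$ at time $\pi_2(y_0)+jK$, which I cover by $\lceil 4r \rceil$ translates of $I_K$; translation invariance and Proposition~\ref{prop:decay_v+-} (with $\theta/(2r)$ in place of $\varepsilon$) then yield
\[
\PP[S \cap G] \leq r \cdot \lceil 4r \rceil \cdot \useconstant{c:c_accommodate}(\theta/(2r))\, e^{-3\kappa \log^{\gamma} K},
\]
which is $\leq \varepsilon/3$ as soon as $K$ (hence $H$) is sufficiently large. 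Combining the three estimates and taking the supremum over $w$ gives the lemma.

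The main obstacle I foresee is precisely that the speedup event refers to random intermediate positions $Y^{y_0}_{jK}$, so Proposition~\ref{prop:decay_v+-} does not apply directly to a fixed starting box; the role of $G$ and the covering of its range by length-$\lambda K$ translates of $I_K$ is to replace the random starting point by a deterministic cover at the cost of a polynomial-in-$r$ overhead, which is comfortably absorbed by the stretched-exponential decay $e^{-3\kappa \log^{\gamma} K}$ once $H$ is taken much larger than $r$.
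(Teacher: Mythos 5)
Your overall architecture is the right one (threatened points via Lemma~\ref{threatenedpoints}, the dichotomy of Lemma~\ref{rwfast}, and a speedup bound from Proposition~\ref{prop:decay_v+-}), but there is a genuine error in the reduction to a single starting point.

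You write: ``Let $y_0$ be the leftmost point of $I_H(w) \cap \LL$. By the monotonicity-in-$y$ and coalescence properties of the construction, $X^y_H - \pi_1(y) \leq X^{y_0}_H - \pi_1(y_0)$ for every $y \in I_H(w) \cap \LL$.'' This is false. Monotonicity (Lemma~\ref{l:monot}) gives $X^y_H \geq X^{y_0}_H$ when $\pi_1(y) \geq \pi_1(y_0)$, and coalescence only tells you the gap $X^y_t - X^{y_0}_t$ stays nonnegative; neither gives you an upper bound on the \emph{displacement} $X^y_H - \pi_1(y)$ in terms of the displacement of the leftmost walker. In fact the gap $X^y_t - X^{y_0}_t$ can (and in the ballistic regime, will tend to) increase over time, since the two walkers see independent Poisson marks while they are apart; so the displacement of $X^y$ can strictly exceed that of $X^{y_0}$. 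Comparing instead to the rightmost point gives the correct inequality direction but an error term of size $\lambda H$, which is useless.

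This is precisely why the paper uses a finite grid of $O(r)$ reference points $y_0, \dots, y_m$ spaced at distance $\lfloor \theta H/(4r) \rfloor$ across $[w, w + \lambda H]$: every $y \in I_H(w) \cap \LL$ is compared to the nearest grid point $y_i$ \emph{to its right}, which by monotonicity gives $X^y_H - \pi_1(y) \leq X^{y_i}_H - \pi_1(y_i) + (\pi_1(y_i) - \pi_1(y))$, and the error term is at most $\theta H/(4r)$ by construction. That error is exactly what degrades the delay speed from $v_+ - \theta/(2r)$ (produced by Lemma~\ref{rwfast}) to $v_+ - \theta/(4r)$ (the quantity in the statement). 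If you had a legitimate reduction to a single $y_0$, the lemma would have read $v_+ - \theta/(2r)$. The use of a grid costs only a polynomial-in-$r$ overhead in the union bounds, which both Lemma~\ref{threatenedpoints} (decay $r^{-1000}$) and Proposition~\ref{prop:decay_v+-} (stretched exponential in $H/r$) comfortably absorb, exactly as in your estimate for $\PP[S]$. Once you replace the single-point reduction by this grid and sum the bounds over the $O(r)$ points, the rest of your argument (including the localization via the good event $G$ and the covering by length-$\lambda K$ intervals) goes through.
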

\begin{proof}
Recall the constant $\useconstant{c:Hlower}$ from Lemmas~\ref{sometrapped} and \ref{threatenedpoints} and let $w \in \R \times \R_+$, $r\geq 1$, and $H\geq r \useconstant{c:Hlower}$. Define a sequence of elements of $\LL$ by
\begin{equation*}
y_{0}=(\lceil \pi_{1}(w)\rceil, \pi_{2}(w)), \quad y_i= y_{0}+i(\lfloor\theta H/4r \rfloor,0), \quad i \in \{0,\cdots,m\},
\end{equation*}
where $m$ is the first index such that $ y_{m} \notin \left[w, w+ \lambda H\right]$ (see Figure \ref{fig:points_yi}).
Since $\useconstant{c:Hlower} > 4/\theta$,
\begin{equation*}
\frac{\theta H}{4r}\geq \frac{\theta}{4}\useconstant{c:Hlower}>1
\end{equation*}
so that the sequence above is not constant.
Define next the events
\begin{align*}
E_1(H,r,w):=&\big\{  \exists \, i\in\left\{0,\dots, m \right\}:\ y_i\text{ is not }\big( \tfrac{H}{r}, r \big)\text{-threatened}\big\},\\
E_2(H,r,w):=&\bigcup_{\substack{0 \leq i \leq m \\ 0 \leq j \leq r-1}} \Big\{X_{(j + 1)\frac{H}{r}}^{y_i} - X_{j\frac{H}{r}}^{y_i} \geq \Big( v_+ + \frac{\theta}{2 r} \Big)\frac{H}{r}\Big\}.
\end{align*}
By Lemma~\ref{rwfast}, on $[E_1(H,r,w)\cup E_2(H,r,w)]^c$ it holds that, for all $ 0 \leq i \leq m$,
\begin{equation}
\label{e:delayyi}
X^{y_{i}}_H - \pi_1(y_{i}) = X^{y_{i}}_{r\frac{H}{r}} - \pi_1(y_{i}) \leq  \Big(v_+ - \frac{\theta}{2r}\Big)r\frac{H}{r}
                 =  \Big(v_+ - \frac{\theta}{2r}\Big)H.                
\end{equation}
Now note that, for any $y \in [w, w+\lambda H] \cap \mathbb{L}$, there exists $y_{i}$ such that $0\leq \pi_{1}(y_{i})-\pi_{1}(y)\leq \theta H/4r$.
Together with \eqref{e:delayyi} and monotonicity, this implies 
\begin{equation*}
X^{y}_{H} - \pi_{1}(y)\leq X^{y_i}_{H} - \pi_{1}(y_i) + \frac{\theta H}{4r}\leq \Big(v_+ - \frac{\theta}{4r}\Big)H
\end{equation*} 
on $[E_1(H,r,w)\cup E_2(H,r,w)]^c$. It follows that
\begin{equation*}
\PP \Big[ A_{H,w}\Big(v_+-\frac{\theta}{4r}\Big)\Big]\leq \PP \left[ E_1(H,r,w)\right]+\PP \left[ E_2(H,r,w)\right].
\end{equation*}

\begin{figure}
\centering
\begin{tikzpicture}[scale=0.8]

\node[below] at (0.5,0) {$y_{0}$};
\filldraw [blue!90!]    (0.5,0)        circle (1.5pt);
\node[below] at (4,0) {$\cdots$};
\node[below] at (5.5,0) {$y_{i-1}$};
\filldraw [blue!90!]    (5.5,0)        circle (1.5pt);
\node[below] at (6.5,0) {$y$};
\filldraw [purple!90!]    (6.5,0)        circle (1.5pt);
\node[below] at (7.5,0) {$y_{i}$};
\filldraw [blue!90!]    (7.5,0)        circle (1.5pt);
\node[below] at (9,0) {$\cdots$};
\node[above] at (0,0) {${w}$};
\node[left] at (12.3,0.3) {\small{$w+\lambda H$}};
\filldraw [black!90!]    (0,0)        circle (2pt);
\filldraw [black!90!]    (12,0)        circle (2pt);
\node[below] at (12.5,0) {$y_{m}$};
\filldraw [blue!90!]    (12.5,0)        circle (1.5pt);


\draw[-, thick]  (-0.5,0) -- (13.5,0);

\end{tikzpicture}
\caption{\small{The sequence of points $y_{i} \in \mathbb{L}$ in the interval $[w, w+\lambda H]$.}}
\label{fig:points_yi}
\end{figure}
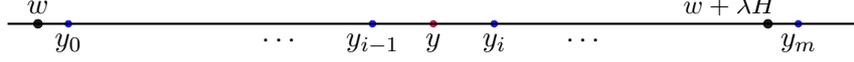

To bound $\PP \left[ E_1(H,r,w)\right]$, note that $m+1 \leq 9r\lambda / \theta$ so that
\begin{equation*}
\PP \left[ E_1(H,r,w)\right] \leq   \frac{9\lambda}{\theta}\useconstant{c:highthreatened} r^{-999}
\end{equation*}
by Lemma~\ref{threatenedpoints}.
Thus, given $\varepsilon>0$, we can fix $r=r(\varepsilon)$ so that
\begin{equation*}
\PP \left[E_1(H,r,w)\right] \leq \frac{\varepsilon}{2} \quad \text{ for all } w \in \R \times \R_+ \text{ and } H \geq r \useconstant{c:Hlower}.
\end{equation*}
On the other hand, Proposition \ref{prop:decay_v+-} gives
\begin{equation*}
\PP \left[E_2(H,r,w)\right]\leq \frac{9r^2\lambda}{\theta}\useconstant{c:c_accommodate} e^{-3 \kappa \log^{\gamma} \frac{H}{r}},
\end{equation*}
where $\useconstant{c:c_accommodate}=\useconstant{c:c_accommodate}(\theta/2r)$.
Then, for $r$ fixed as above, we can choose $H_0= H_{0}(\varepsilon)$ large enough such that
$\PP \left[E_2(H,r,w)\right] \leq \frac{\varepsilon}{2}$ for all $H \geq H_0$,
and this concludes the proof.
\end{proof}

Note that $r(\varepsilon)$ provided in the proof above tends to infinity as $\varepsilon \downarrow 0$,
so Lemma~\ref{firstdelay} by itself is not enough to provide a contradiction with the definition of $v_+$.
To go around this issue, we use Lemma~\ref{firstdelay} as input for a renormalization argument
similar to that of Section~\ref{ss:interp}, as described next.

Consider the sequence of scales defined in~\eqref{def_scales_rec}.
Recall $2>(1+\nu)^\gamma$ as well as the constants $\useconstant{c:general_recursion}$, $\useconstantk{k:scale_to_boxes}$ from Lemma~\ref{l:recgen}.
Now fix an integer $k_\bullet \geq \useconstantk{k:scale_to_boxes}$ such that
\begin{equation}\label{conditions_by_delay}
\sum_{k \geq k_\bullet} \frac{1}{k^2} \leq  1, \quad \sqrt{\ell_k} \geq k^2  \quad \text{ and } \quad 3 \useconstant{c:general_recursion} \ell_k^4 e^{-(2-(1+\nu)^\gamma) \kappa (\log L_k)^\gamma} \leq 1 \quad \text{ for all } k \geq k_\bullet.
\end{equation}
By Lemma~\ref{firstdelay}, there exist $H_\bullet \geq 1$, $r_\bullet \in \N$ such that
\begin{equation}
\label{e:delay_basecase}
p_{h L_{k_\bullet}}\Big(v_+ - \frac{\theta}{4r_\bullet} \Big) \leq e^{-\kappa (\log L_{k_\bullet})^\gamma} 
\quad \text{ for all } h \geq H_\bullet / L_{k_\bullet}.
\end{equation}
Denote by $\useconstant{c:c_accommodate}=\useconstant{c:c_accommodate}(\varepsilon_\bullet)$ the constant from Proposition~\ref{prop:decay_v+-} corresponding to 
$\varepsilon_\bullet = \theta/(8 r_\bullet)$, and fix a further constant $c_\bullet \geq H_\bullet/L_{k_\bullet}$ such that
\begin{equation}\label{e:delay_trigger}
\useconstant{c:c_accommodate} e^{-3\kappa (\log h)^\gamma} \leq 1 \qquad \text{ for all } h \geq c_\bullet.
\end{equation}
Finally, we recursively define a sequence of speeds as
\[
v_{k_\bullet} = v_+ - 2\varepsilon_\bullet, \qquad v_{k+1} = v_k + \frac{\varepsilon_\bullet}{k^2},  \;\; k \geq k_\bullet.
\]
Note that $v_k$ is increasing with limit $v_\infty \leq v_+ - \varepsilon_\bullet$.

With these definitions in hand, we obtain the following absurdity.
 
\begin{lemma}\label{Propimptheo}
For all $h\geq c_\bullet$ and all $k \geq k_\bullet$,
$\displaystyle 
p_{h L_k}(v_+-\varepsilon_\bullet) \leq e^{-\kappa (\log L_k)^\gamma}.
$
\end{lemma}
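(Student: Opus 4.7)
The plan is to prove by induction on $k \geq k_\bullet$ the stronger statement that $p_{hL_k}(v_k) \leq e^{-\kappa(\log L_k)^\gamma}$ for all $h \geq c_\bullet$. Since $(v_k)_{k \geq k_\bullet}$ is increasing with limit $v_\infty \leq v_+ - \varepsilon_\bullet$, monotonicity of $v \mapsto p_H(v)$ will then yield the claimed bound on $p_{hL_k}(v_+-\varepsilon_\bullet)$. The base case $k = k_\bullet$ is precisely \eqref{e:delay_basecase}, using that $c_\bullet \geq H_\bullet/L_{k_\bullet}$ guarantees $hL_{k_\bullet} \geq H_\bullet$ whenever $h \geq c_\bullet$, and that $v_{k_\bullet} = v_+ - 2\varepsilon_\bullet = v_+ - \theta/(4r_\bullet)$.

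For the inductive step, assuming the bound at scale $k$, I will apply Lemma~\ref{l:recgen} with $v_{\min} = v_k$ and $v_{\max} = v_+ + \varepsilon_\bullet$. Since $v_k \in [v_+ - 2\varepsilon_\bullet,\, v_+ - \varepsilon_\bullet]$, we have $v_{\max} - v_k \leq 3\varepsilon_\bullet$, so the recursion speed $\bar v = v_k + (v_{\max}-v_k)/\sqrt{\ell_k}$ satisfies $\bar v \leq v_k + \varepsilon_\bullet/k^2 = v_{k+1}$ provided that $\sqrt{\ell_k} \geq 3k^2$, which we impose by enlarging $k_\bullet$ if necessary (this mildly strengthens~\eqref{conditions_by_delay}). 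Monotonicity and Lemma~\ref{l:recgen} then give
\begin{equation*}
p_{hL_{k+1}}(v_{k+1}) \;\leq\; p_{hL_{k+1}}(\bar v) \;\leq\; \useconstant{c:general_recursion}\ell_k^4 \Bigl\{ p_{hL_k}(v_k)^2 + p_{hL_k}(v_{\max}) + e^{-8\kappa(\log hL_k)^\gamma} \Bigr\}.
\end{equation*}

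Next I bound each of the three terms. The first is at most $e^{-2\kappa(\log L_k)^\gamma}$ by the inductive hypothesis. For the second, since $v_{\max} = v_+ + \varepsilon_\bullet$, Proposition~\ref{prop:decay_v+-} gives $p_{hL_k}(v_{\max}) \leq \useconstant{c:c_accommodate}\, e^{-3\kappa(\log hL_k)^\gamma}$. Using the elementary superadditivity $(a+b)^\gamma \geq a^\gamma + b^\gamma$ (valid for $a,b \geq 0$ and $\gamma \geq 1$) with $a=\log L_k$, $b=\log h$, this is at most $\useconstant{c:c_accommodate}\, e^{-3\kappa(\log h)^\gamma}\, e^{-3\kappa(\log L_k)^\gamma}$, which by the trigger condition \eqref{e:delay_trigger} is $\leq e^{-3\kappa(\log L_k)^\gamma}$. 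The third term is handled identically. Summing, the right-hand side is at most $3\useconstant{c:general_recursion}\ell_k^4\, e^{-2\kappa(\log L_k)^\gamma}$; applying the third clause of \eqref{conditions_by_delay} together with $\log L_{k+1} \leq (1+\nu)\log L_k$ from \eqref{increasing_L_k}, this is in turn bounded by $e^{-(1+\nu)^\gamma \kappa(\log L_k)^\gamma} \leq e^{-\kappa(\log L_{k+1})^\gamma}$, closing the induction.

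The main subtlety is the calibration of $v_{\max}$: it must lie strictly above $v_+$ so that Proposition~\ref{prop:decay_v+-} supplies decay at scale $k$, yet the gap $v_{\max}-v_k$ must remain small enough that $\bar v \leq v_{k+1}$. This compatibility is exactly what forces the constraint $\sqrt{\ell_k} \gtrsim k^2$ (slightly strengthened) in the definition of $k_\bullet$. A secondary technical point is the superadditivity inequality for $\gamma$-powers of logarithms, which is what allows the scale-$h$ trigger~\eqref{e:delay_trigger} to propagate through all renormalization steps without accumulating the constant $\useconstant{c:c_accommodate}$ across scales.
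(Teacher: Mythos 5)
Your proof is correct and follows essentially the same route as the paper: an induction on $k$ of the stronger statement $p_{hL_k}(v_k)\leq e^{-\kappa(\log L_k)^\gamma}$, with Lemma~\ref{l:recgen} applied to $v_{\min}=v_k$, $v_{\max}=v_+ + \varepsilon_\bullet$, the $p_{hL_k}(v_{\max})$ term controlled via Proposition~\ref{prop:decay_v+-} combined with superadditivity of $x\mapsto x^\gamma$ and the trigger~\eqref{e:delay_trigger}, and the induction closed by the last clause of~\eqref{conditions_by_delay}, followed by monotonicity in $v$. You also correctly caught a small slip in the paper: since $v_{\max}-v_k$ can be as large as $3\varepsilon_\bullet$, the displayed bound $\bar v\leq v_k+\varepsilon_\bullet/\sqrt{\ell_k}$ in the paper is not justified, and one really needs $\sqrt{\ell_k}\geq 3k^2$ rather than $\sqrt{\ell_k}\geq k^2$ --- a harmless strengthening of~\eqref{conditions_by_delay}, exactly as you propose.
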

\begin{proof}
Note first that, by Proposition~\ref{prop:decay_v+-}, if $h \geq c_\bullet$ then
\begin{equation*}
p_{h L_k}(v_+ - \varepsilon_\bullet) \leq \useconstant{c:c_accommodate} e^{-3 \kappa (\log h L_k)^\gamma}
\leq \useconstant{c:c_accommodate} e^{-3 \kappa (\log h)^\gamma} e^{-3\kappa (\log L_k)^\gamma}
\leq e^{-3\kappa (\log L_k)^\gamma}
\end{equation*}
by \eqref{e:delay_trigger}.
Now take $k \geq k_\bullet$ and set $v_{\max} = v_+ + \varepsilon_\bullet$, $v_{\min} = v_k$.
Note that
\[
\bar{v} = v_{\min} + \frac{v_{\max} - v_{\min}}{\sqrt{\ell_k}} \leq v_k + \frac{\varepsilon_\bullet}{\sqrt{\ell_k}} \leq v_{k+1}
\]
by \eqref{conditions_by_delay}. Monotonicity and Lemma~\ref{l:recgen} then yield
\begin{equation}
\label{e:recdelay}
p_{h L_{k+1}}(v_{k+1}) \leq \useconstant{c:general_recursion} \ell_k^4 \left\{ p_{h L_k}(v_k)^2 + 2 e^{-2 \kappa (\log L_k)^\gamma} \right\}
\quad \text{ for all } h \geq c_\bullet \text{ and } k \geq k_\bullet.
\end{equation}
Fix now $h \geq c_\bullet$ and let us prove by induction that
\begin{equation}
\label{e:delaysequence}
p_{h L_{k}}(v_{k}) \leq e^{- \kappa (\log L_k)^\gamma} \quad \text{ for all } k \geq k_\bullet.
\end{equation}
Indeed, the base case $k=k_\bullet$ is given by \eqref{e:delay_basecase}.
Assuming that \eqref{e:delaysequence} holds for some $k \geq k_\bullet$, we apply \eqref{e:recdelay} to obtain
\[
p_{h L_{k+1}}(v_{k+1}) \leq 3 \useconstant{c:general_recursion} \ell_k^4 e^{-2\kappa (\log L_k)^\gamma}
\]
so that
\[
e^{\kappa (\log L_{k+1})^\gamma} p_{h L_{k+1}}(v_{k+1}) \leq 3 \useconstant{c:general_recursion} \ell_k^4 e^{-(2-(1+\nu)^\gamma) \kappa (\log L_k)^\gamma} \leq 1
\]
by \eqref{conditions_by_delay}, where we used $L_{k+1} \leq L_k^{1+\nu}$.
To conclude, note that, since $v_k \leq v_+ - \varepsilon_\bullet$ for all $k \geq k_\bullet$, 
monotonicity and \eqref{e:delaysequence} together imply the statement.
\end{proof}

Proposition~\ref{v+=v-} now easily follows from Lemma~\ref{Propimptheo}.

\begin{proof}[Proof of Proposition \ref{v+=v-}]
If $v_+ \neq v_-$ then $\theta>0$. By Lemma~\ref{Propimptheo}, there exists $\varepsilon>0$ such that $\liminf_{H\to\infty}p_{H}(v_{+}-\varepsilon)=0$, but this contradicts the definition of $v_{+}$.
\end{proof}

\section{Proof of Theorem \ref{t:maingeneral}}
\label{s:proof_main}

Finally, using Propositions~\ref{p:bounds_to_vs},~\ref{prop:decay_v+-}, and~\ref{v+=v-} together with the Borel-Cantelli Lemma, we can finish the proof of Theorem~\ref{t:maingeneral}.

\begin{proof}[Proof of Theorem \ref{t:maingeneral}]
By Proposition~\ref{v+=v-}, we can define $v:=v_{-}= v_{+}$, and $v \geq v_{\star}$ follows from Proposition~\ref{p:bounds_to_vs}.
To show \eqref{e:LLNdecay}, fix $\varepsilon > 0$ and note that, for $t \ge 1$, 
\[
\PP\left[|X_t-tv|\geq \tfrac12 \varepsilon t \right] \leq p_t(v_+ + \tfrac12 \varepsilon) + p_t(v_- - \tfrac12 \varepsilon) \leq 2 \useconstant{c:c_accommodate} e^{-3\kappa (\log t)^\gamma}
\]
by Proposition~\ref{prop:decay_v+-},
where $\useconstant{c:c_accommodate}=\useconstant{c:c_accommodate}(\varepsilon/2)$.
Thus, for $L \in \N$,
the event
\[
\mathcal{U}_L = \left\{ \exists\, n \in \N, n \geq L \colon |X_n-nv| > \tfrac12 \varepsilon n \right\}
\]
has probability
\[
\begin{split}
\PP \left[\mathcal{U}_L\right]
\leq 2 \useconstant{c:c_accommodate} \sum_{n = L}^\infty e^{-3\kappa (\log n)^\gamma}
\leq K_1 e^{-2 \kappa (\log L)^\gamma}
\end{split}
\]
where $K_1 = K_1(\varepsilon) =  2 \useconstant{c:c_accommodate} \sum_{n\geq 1} e^{-\kappa (\log n)^\gamma}$.

To control the deviation between integer times, define, for each $L\in\N$,
\begin{equation*}
\mathcal{V}_L=\Big\{\exists n \geq L \colon\, \sup_{s \in [0,1]}\left\lvert X_{n+s}-X_{n} \right\rvert \geq 2 \sqrt{n} \Big\}.
\end{equation*}
Lemma~\ref{l:PoiConc} and a union bound imply
$\PP \left[ \mathcal{V}_L \right] \leq K_2 e^{- \sqrt{L}}$
where $K_2 = e^{2\lambda} \sum_{n=1}^\infty e^{-\sqrt{n}}$.

Assume now that $T \in [2, \infty)$ is large enough such that
\begin{equation}\label{e:condT}
\log(T-1)^\gamma \geq \tfrac12 (\log T)^\gamma, \;\; \sqrt{T-1} \geq \kappa (\log T)^\gamma \;\; \text{ and } \;\; 2\sqrt{t} + v < \tfrac12 \varepsilon t \; \forall\, t \geq T.
\end{equation}
Let $L = \lfloor T \rfloor$.
On the event $\mathcal{U}_L^c \cap \mathcal{V}_L^c$, we have, for any $t \geq T$,
\begin{equation*}
|X_t - t v| \leq |X_{\lfloor t \rfloor} - \lfloor t \rfloor v| +|X_t - X_{\lfloor t \rfloor}| + v
\leq \tfrac12 \varepsilon t + 2\sqrt{t} + v < \varepsilon t,
\end{equation*}
so that, setting $K=K_1 + K_2$, 
\begin{equation*}
\begin{split}
\PP\left[\exists t \geq T \colon |X_t - tv| \geq \varepsilon t \right] & \leq \PP\left[\mathcal{U}_L\right] + \PP\left[ \mathcal{V}_L\right] \\
& \leq K_1 e^{-2\kappa (\log L)^\gamma} + K_2 e^{-\sqrt{L}} \leq K e^{-\kappa (\log T)^\gamma}
\end{split}
\end{equation*}
where for the last inequality we used \eqref{e:condT} and $L \geq T-1$.
To conclude \eqref{e:LLNdecay}, we only need to increase $K$ if needed to account for smaller values of $T$.

Now, since \eqref{e:LLNdecay} is summable along integer $T$, the Borel-Cantelli lemma directly implies the almost sure convergence in \eqref{e:genLLN}.
To obtain convergence in $L^p$, observe that, by Lemma~\ref{l:bdincr}, $|\tfrac1t X_t - v|^p$ is uniformly integrable for each $p \geq 1$.
\end{proof}

\section{Proof of Proposition~\ref{p:DEC_ASEP}}
\label{s:asep}

In this section we prove the lateral decoupling for the asymmetric exclusion process, Proposition~\ref{p:DEC_ASEP}. 
The proof will in fact show exponential decay in $d_H$ in the right-hand side of \eqref{e:DEC}.
We start by providing a particular construction of the process and introducing higher-class particles. This concept will be used in combination with the graphical representation in order to verify our decoupling.

\bigskip
\noindent\textbf{Construction.} Fix $p \in [0,1]$ and an initial configuration $\eta_0 \in \{0,1\}^\Z$. Sites $x \in \Z$ with $\eta_0(x) = 1$ are said to contain a particle at $t=0$. We label each particle by its starting position.
Each site $x \in \Z$ is given two independent Poisson clocks $\cP^r(x)$, $\cP^l(x)$ with rates $p$ and $1-p$, respectively.
In case $\eta_0(x)=1$, particle $x$ will use $\cP^r(x)$, $\cP^l(x)$ to perform its future jumps.
Specifically, whenever $\cP^r(x)$ (resp.\ $\cP^l(x)$) rings, particle $x$ attempts to jump to the right (resp.\ left).
If the target site is occupied, the jump is suppressed and the particle remains at its current position; otherwise, the particle moves to the target site. 
Note that, with this construction, each fixed particle has rate at most $p$ to jump to the right and at most $1-p$ to jump to the left.

\bigskip
\noindent\textbf{Higher-class particles.} It will be useful for us to divide particles into ordered classes.
Particles in the same class are treated as indistinguishable, but the evolution of each class is influenced by particles of lower class.

Using the previous construction, this can be introduced as follows. For each $k \in \N$,
particles of class $k$ try to jump to the right/left according to their own clocks as usual. 
The jump is suppressed if the target site is occupied by a particle with class at most $k$. 
In case the target site is occupied by a particle with class $k+1$ or higher, the jump is not suppressed, and instead the two sites involved exchange occupations.

In this construction, for each $k$, the collection of particles of class at most $k$ are themselves distributed as an asymmetric exclusion process. 
Thus a $k$th-class particle can be understood as a ``particle that is invisible to all the particles with class at most $k-1$''. Alternatively, a particle is a $k$th-class particle if adding or removing it does not affect the evolution of the particles with class at most $k-1$. 

Finally, note that a higher-class particle may perform more jumps than in the original process,
as it may be forced to jump when a lower-class particle jumps on top of it.
However, this additional jump rate is at most $p$ to the left and $1-p$ to the right, so that particles of any class have jump rates at most $1$ in either direction. In particular, their displacement between two times $t$, $t+s$ in a given direction is stochastically dominated by a Poisson random variable with parameter $s$. This observation will help us control which particles influence each region when verifying the decoupling inequality \eqref{e:DEC}.

\begin{proof}[Proof of Proposition~\ref{p:DEC_ASEP}]
Fix $p, \rho \in [0,1]$. Let $\eta_0(x)$ be i.i.d.\ Bernoulli($\rho$), and define $\eta = (\eta_t)_{t \geq 0}$ using the construction above.
Fix $v_\circ>1$, $\gamma_\circ>1$, $\kappa_{\circ} > 0$ and $C_\circ>0$. 
Fix $\bar{v} \in (1,v_\circ)$ and let  $\varepsilon \in (0,1)$ such that
$(1-7\varepsilon) v_\circ \geq \bar{v}$.
Set
\begin{equation}
\label{e:prdecASEP1}
\usebigconstant{c:decoupling_2} = \frac{2 v_\circ}{\varepsilon}.
\end{equation}
Let $B_1$, $B_2$ be two regions as in the statement of Assumption~\eqref{e:DEC} only satisfying 
\begin{equation}\label{e:prdecASEP2}
\dist_H \geq v_\circ \dist_V + \usebigconstant{c:decoupling_2} s
\end{equation}
at first; the constant $\usebigconstant{c:decoupling_3}$ will be identified later.
By translation invariance, we may assume that $a=b=0$ so $c=\dist_H$ and $d=\dist_V+s$.
Split the interval $[0, \dist_{H})$ into four subintervals: $[0, \Delta)$, $[\Delta, 2\Delta)$, $[2\Delta, 3\Delta)$, and $[3\Delta, \dist_{H})$, where $\Delta = 2\varepsilon\dist_{H}$. 
Declare particles starting in $[3\Delta, +\infty)$ as being first class, particles starting at $(-\infty, \Delta)$ as second class and particles starting in $[\Delta, 3\Delta)$ as third class. Define the events
\begin{equation*}
\begin{split}
G_1 & = \{\text{all first-class particles remain in $(2\Delta, \infty)$ until time $s$}\};\\
G_{2} & = \{\text{all second-class particles remain in $(-\infty, 2\Delta)$ until time $s$}\};\\
G_3 & = \{\text{all third-class particles remain in $(0, \infty)$ until time $s$}\};\\
G_{23} & = \{\text{all second- and third-class particles remain in $(-\infty, \dist_H)$ until time $\dist_V+2s$}\},
\end{split}
\end{equation*}
and denote by $G$ their intersection.
Note that, on $G$, the configuration of $\eta$ inside $B_1$, $B_2$ is a function of $[\eta_0(x), \cP^r(x), \cP^l(x)]_{x < \Delta}$ and $[\eta_0(x), \cP^r(x), \cP^l(x)]_{x \geq 3\Delta}$, respectively, which are independent. 
Thus, for any $f_1, f_2$ as in the statement of Assumption~\eqref{e:DEC},
\begin{equation*}
\EE^{\rho}[f_1f_2]\leq \EE^{\rho}[f_1] \EE^{\rho}[f_2]+3\PP^{\rho}[G^{c}].
\end{equation*}

Now it suffices to bound the probability of $G^{c}$. 
Since particles move to each direction with rate at most one, we can bound,
for a given site $x$,
\begin{equation*}
\PP\left( \eta_0(x) = 1, \text{ particle $x$ travels distance $D$ by time $t$} \right) \leq 2 \rho \PP(N_t \geq D) \leq 2 e^{2t -D},
\end{equation*}
where $N$ is a Poisson process with rate $1$ and we used Lemma~\ref{l:PoiConc}.
Thus
\[
\PP(G_1^c) \leq \sum_{x \geq 3 \Delta} 2 e^{2s} e^{-(x-2\Delta)} \leq 4 e^{2s-\Delta} \leq 4 e^{-\varepsilon \dist_H}
\]
since $\Delta = 2 \varepsilon \dist_H$ and $\varepsilon \dist_H \geq 2 s$ by \eqref{e:prdecASEP1}--\eqref{e:prdecASEP2}.
Analogously, we can bound
\[
\PP(G_2^c) \leq 4 e^{-\varepsilon \dist_H} \quad \text{ and } \quad \PP(G_3^c) \leq 4 e^{-\varepsilon \dist_H}.
\]
For $G_{23}^c$, choose $\theta>0$ small enough such that $e^{\theta}-\bar{v} \theta-1<0$
and estimate
\[
\PP\left(N_t \geq D \right) \leq e^{t(e^\theta-1) - \theta D} = e^{t(e^\theta - \bar{v} \theta - 1) - \theta(D-t\bar{v})} \leq e^{-\theta(D-t \bar{v})}.
\]
Taking $t = \dist_V + 2s$ and noting that
\[
\dist_H - 3\Delta - \varepsilon \dist_H = (1-7\varepsilon)\dist_H \geq (1-7\varepsilon)v_\circ(\dist_V + 2s) \geq \bar{v} (\dist_V + 2s),
\]
we obtain, for some constant $c>0$,
\[
\PP(G_{23}^c) \leq \sum_{k=0}^\infty 2 \rho e^{-\theta(\varepsilon \dist_H + k)} \leq c e^{-\varepsilon \theta \dist_H}.
\]
The previous bounds together yield
\begin{equation}
\EE^{\rho}[f_1f_2]\leq \EE^{\rho}[f_1] \EE^{\rho}[f_2]+ (12+c)e^{-\varepsilon \theta \dist_H}.
\end{equation}
To conclude, choose $\usebigconstant{c:decoupling_3}$ large enough such that $\dist_{H} \geq \usebigconstant{c:decoupling_3}$ implies
\begin{equation*}
(12+c) e^{-\varepsilon \theta \dist_H} \leq C_{\circ} e^{-\kappa_{\circ} (\log \dist_{H})^{\gamma_{\circ}}}.
\qedhere
\end{equation*}
\end{proof}

\appendix

\section{Deviation estimates for submartingales}
\label{s:martingale_appendix}

In this subsection we give the proof of Theorem~\ref{t:largedrift}.
It is based on a deviation estimate for submartingales, cf.\ Proposition~\ref{p:dev_submart} below.
Before we start, we state a simple Chernoff bound for Poisson random variables that is used throughout the paper.
\begin{lemma}\label{l:PoiConc}
Let $N \sim \textnormal{Poisson}(\lambda )$ where $\lambda >0$. For every $u>0$,
\begin{equation*}
\PP\left[N\geq u \right] \leq e^{2\lambda} e^{-u}.
\end{equation*}
\end{lemma}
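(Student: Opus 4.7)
The plan is to apply the standard Chernoff bound: for any $t>0$, Markov's inequality gives
\[
\PP[N\geq u] = \PP[e^{tN}\geq e^{tu}] \leq e^{-tu}\EE[e^{tN}].
\]
I would then insert the Poisson moment generating function $\EE[e^{tN}] = e^{\lambda(e^t-1)}$, which yields
\[
\PP[N\geq u] \leq \exp\bigl(\lambda(e^t-1) - tu\bigr) \qquad \text{for every } t>0.
\]

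The simplest choice is $t=1$. Since $e-1<2$, this gives $\lambda(e-1) \leq 2\lambda$, so
\[
\PP[N\geq u] \leq e^{\lambda(e-1)-u} \leq e^{2\lambda}e^{-u},
\]
which is the desired bound. There is no real obstacle; the only point worth noting is that the exponent $2\lambda$ (rather than the sharper optimized Cram\'er rate function) is chosen so that the bound has the clean uniform form used throughout the paper, and picking $t=1$ is the most economical way to produce it.
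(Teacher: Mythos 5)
Your proof is correct and coincides with the paper's: both apply Markov's inequality to $e^{N}$ (i.e.\ the Chernoff bound with $t=1$), insert the Poisson moment generating function, and use $e-1<2$ to absorb the constant into $e^{2\lambda}$. Nothing to add.
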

\begin{proof}
Simply observe that
$\displaystyle
\PP\left[ N\geq u \right] =  \PP\left[e^{N}\geq e ^{u} \right] \leq \EE\left[ e^{N}\right] e^{-u} = e^{(e -1)\lambda - u}
$.
\end{proof}

Our first observation is as follows. Recall Sections~\ref{ss:RW} and \ref{ss:properties}.
\begin{lemma}\label{l:submart}
Suppose that $\inf_{\xi \in E} \{\alpha(\xi) - \beta(\xi)\} > u$ for some $u \in \R$.
Then $M_t := X_t - tu$ is a submartingale under $\PP$ with respect to the filtration $\cF_t = \sigma(\eta, (X_u)_{0 \leq u \leq t})$.
\end{lemma}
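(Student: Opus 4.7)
The plan is to verify the submartingale property directly via the Dynkin-style compensator for $X$. Integrability is straightforward: by Lemma~\ref{l:bdincr}, $|X_t| \leq N^{(0,0)}_t$ for a rate-$\lambda$ Poisson process, so $\EE|M_t| \leq \EE|X_t| + |u| t \leq (\lambda + |u|) t < \infty$, and $M_t$ is clearly $\cF_t$-measurable since $X_t$ is and the term $tu$ is deterministic.

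The core of the argument is to produce the compensator of $X$ under the conditional measure given $\eta$. Since the whole path of $\eta$ is included in $\cF_t$, it suffices to work quenched: conditionally on $\eta$, the process $X$ is a pure-jump Markov process whose time-dependent generator $L_s$ acts on the identity function $f(x)=x$ as
\begin{equation*}
L_s f(x) \;=\; \alpha(\eta_s(x))\bigl(f(x+1)-f(x)\bigr) + \beta(\eta_s(x))\bigl(f(x-1)-f(x)\bigr) \;=\; \alpha(\eta_s(x)) - \beta(\eta_s(x)).
\end{equation*}
Standard theory for jump processes with bounded rates (which holds here since $\alpha+\beta \leq \Lambda$) then yields that, conditionally on $\eta$,
\begin{equation*}
N_t \;:=\; X_t - \int_0^t \bigl[\alpha(\eta_s(X_s)) - \beta(\eta_s(X_s))\bigr]\, \dd s
\end{equation*}
is a martingale with respect to the quenched filtration $\sigma((X_u)_{0 \leq u \leq t})$. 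Enlarging the filtration by $\sigma(\eta)$ preserves the martingale property, so $N_t$ is also an $\cF_t$-martingale under $\PP$.

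Using the hypothesis $\alpha(\xi)-\beta(\xi) > u$ uniformly in $\xi \in E$, for any $0 \leq s \leq t$,
\begin{equation*}
\EE\bigl[X_t - X_s \,\big|\, \cF_s\bigr] \;=\; \EE\!\left[\int_s^t \bigl(\alpha(\eta_r(X_r)) - \beta(\eta_r(X_r))\bigr) \dd r \,\Big|\, \cF_s\right] \;\geq\; u(t-s),
\end{equation*}
which rearranges to $\EE[M_t - M_s \mid \cF_s] \geq 0$, giving the submartingale property. The only mildly delicate point is the compensator identity, but this is a textbook fact for Markov jump processes with uniformly bounded rates and we use it only for the unbounded function $f(x)=x$; the linear growth of $f$ combined with the Poisson bound from Lemma~\ref{l:bdincr} ensures that all the integrability required to run the standard proof (e.g.\ via the semimartingale decomposition of a counting-process integral against the rate kernel) is automatic.
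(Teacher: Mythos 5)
Your argument is correct, but it takes a genuinely different route from the paper's. Where the paper gives a self-contained, hands-on estimate---partitioning according to the number of Poisson rings in $[t,t+s]$, computing $\PP(G^{\pm}_{t,s}(x))$ exactly for the events where precisely one of the walk's own clocks rings and no neighboring clocks do, showing this contributes $s(u+\delta)+\mathcal{O}(s^2)$ to the conditional increment while everything else is $\mathcal{O}(s^2)$, and then chaining short intervals---you instead invoke the Dynkin/compensator martingale: conditionally on $\eta$, $X$ is a time-inhomogeneous pure-jump Markov process with bounded rates, so $X_t - \int_0^t [\alpha(\eta_s(X_s))-\beta(\eta_s(X_s))]\,\dd s$ is a quenched martingale, and since $\eta$ is $\cF_0$-measurable the quenched martingale property passes to the annealed filtration $\cF_t$ without further work. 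That filtration-enlargement step is indeed sound, and the final inequality $\EE[M_t - M_s \mid \cF_s] = \EE[\int_s^t (\alpha-\beta-u)\,\dd r \mid \cF_s] \geq 0$ is immediate from the hypothesis. The trade-off is that your version is shorter and more conceptual but leans on a black box---the compensator identity for a time-inhomogeneous jump Markov process applied to the unbounded test function $f(x)=x$---which the paper chose to re-derive from scratch so as to stay self-contained. To make your proof publication-ready you would want either a precise citation for this compensator fact in the time-inhomogeneous, unbounded-$f$ setting, or a brief truncation/localization argument; the exponential tail bound from Lemma~\ref{l:bdincr} makes the latter routine, as you note, but it is not quite automatic.
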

\begin{proof}
Fix $u \in \R$ as in the statement.
We will use the notation $\cO(x)$ for a function whose absolute value is bounded by $C|x|$,
where $C$ is a positive constant depending only on $\lambda$ and $|u|$.
For a random variable $Z \geq 0$ and an event $A$, we will write $\EE[Z;A] = \EE[Z \1_A]$.

Let $\delta = \inf_{\xi \in E} \{\alpha(\xi) - \beta(\xi)\} - u >0$.
For $t\geq 0$, an event $A_t \in \cF_t$ and $x \in \Z$,
write $A_t(x) = A_t \cap \{X_t = x\} \in \cF_t$.
For $s \geq 0$, let $\bar{X}_{t,s} = \sup_{0 \leq u \leq s}|X_{t+u}-X_t|$ and note that
\begin{equation}
\label{e:prsubm1}
\begin{split}
& \EE \left[|X_{t+s}-X_t|; A_t(x), \bar{X}_{t,s} \geq 2\right] \leq \EE \left[N_s^{x,t} ; A_t(x), N_s^{x,t} \geq 2 \right] \\
= & \, \EE \left[N_s^{x,t} ; N_s^{x,t} \geq 2 \right] \PP(A_t(x)) =  \cO(s^2) \PP(A_t(x)),
\end{split}
\end{equation}
where $N^y$ is as in Lemma~\ref{l:bdincr} and we used that $E[N^y_s; N^y_s \geq 2] \leq (\lambda s)^2$. 

Recall the definition of the Poisson point processes $\Pi_\alpha, \Pi_\beta, \Pi_\Lambda$ in Sections~\ref{ss:RW} and \ref{ss:properties}.
Define random variables $Z^\alpha_{t,s}(x) = \Pi_\alpha (\{x\} \times (t, t+s])$ and analogously $Z^\beta_{t,s}(x)$, $Z^\Lambda_{t,s}(x)$.
Note that $Z^\alpha_{t,s}(x) + Z^\beta_{t,s}(x) \leq Z^\Lambda_{t,s}(x)$.
Consider the events
\[
\begin{split}
G^+_{t,s}(x) & = \left\{ Z^\alpha_{t,s}(x) = 1, Z^{\beta}_{t,s}(x) = Z^{\Lambda}_{t,s}(x-1) = Z^{\Lambda}_{t,s}(x+1) =0 \right\}, \\
G^-_{t,s}(x) & = \left\{ Z^\beta_{t,s}(x) = 1, Z^{\alpha}_{t,s}(x) = Z^{\Lambda}_{t,s}(x-1) = Z^{\Lambda}_{t,s}(x+1) =0 \right\},\\
G^0_{t,s}(x) & = \left\{ Z^\alpha_{t,s}(x) = Z^{\beta}_{t,s}(x) =0 \right\},
\end{split}
\]
as well as $G_{t,s}(x) =  G^+_{t,s}(x) \cup G^-_{t,s}(x)$ and $\bar{G}_{t,s}(x) = G_{t,s}(x) \cup G^0_{t,s}(x)$.
Note first that 
\[
\begin{split}
\PP(G^+_{t,s}(x)) & = e^{-2\Lambda s} \EE \left[ e^{-\int_t^{t+s} \alpha(\eta_\tau(x)) + \beta(\eta_\tau (x)) \dd \tau} \int_t^{t+s} \alpha(\eta_\tau(x)) \dd \tau  \right], \\
\PP(G^-_{t,s}(x)) & = e^{-2\Lambda s} \EE \left[ e^{-\int_t^{t+s} \alpha(\eta_\tau(x)) + \beta(\eta_\tau (x)) \dd \tau} \int_t^{t+s} \beta(\eta_\tau(x)) \dd \tau  \right].
\end{split}
\]
Moreover,
\begin{equation}
X_{t+s} - X_t = 
\begin{cases}
1  & \text{ on } G^+_{t,s}(x) \cap \{X_t = x\},\\
-1 & \text{ on } G^-_{t,s}(x) \cap \{X_t = x\},
\end{cases}
\end{equation}
so that
\begin{equation}\label{e:prsubm2}
\begin{split}
& \EE \left[ X_{t+s} - X_t ; A_t(x), G_{t,s}(x) \right] 
 =  \left\{ \PP(G^+_{t,s}(x)) - \PP(G^-_{t,s}(x)) \right\} \PP(A_t(x)) \\
\geq  & \, (u+\delta) s e^{-3\Lambda s} \PP(A_t(x)) 
 = \left\{ s(u+\delta) + \cO(s^2) \right\} \PP(A_t(x)).
\end{split}
\end{equation}
Now note that $\bar{G}_{t,s}(x)^c \subset \{Z^\Lambda_{t,s}(x-1) + Z^\Lambda_{t,s}(x)+ Z^\Lambda_{t,s}(x+1) \geq 2 \}$,
and thus
\[
\PP(\bar{G}_{t,s}(x)^c) = \cO(s^2).
\]
Since $G_{t,s}(x) \cap \{X_t = x\} \subset \{\bar{X}_{t,s} = 1\}$ and $G^0_{t,s} \cap \{X_t =x\} \subset  \{\bar{X}_{t,s} = 0\}$,
\begin{equation}
\label{e:prsubm3}
\begin{split}
& \left| \EE \left[X_{t+s}-X_t; A_t(x), \bar{X}_{t,s} = 1\right] - \EE \left[X_{t+s}-X_t; A_t(x), G_{t,s}(x)\right] \right|\\
\leq & \PP\left( A_t(x), \bar{X}_{t,s} = 1, \bar{G}_{t,s}(x)^c \right)
\leq \PP(\bar{G}_{t,s}(x)^c) \PP(A_t(x)) = \cO(s^2) \PP(A_t(x)).
\end{split}
\end{equation}
Putting \eqref{e:prsubm1}--\eqref{e:prsubm3} together, we obtain
\begin{equation*}
\begin{split}
\EE \left[ M_{t+s} - M_t ; A_t \right] 
& = \left\{- s u + \cO(s^2) \right\} \PP(A_t) + \sum_{x \in \Z} \EE \left[ X_{t+s} - X_t ; A_t(x), G_t(x) \right] \\
& = \left\{s \delta + \cO(s^2) \right\} \PP(A_t) \geq \tfrac12 s \delta \PP(A_t)
\end{split}
\end{equation*}
whenever $s \leq s_0$ for some $s_0 = s_0(\lambda, |u|) > 0$.
This implies that, for such $s$ and any $t\geq0$, $E[M_{t+s}|\cF_t] \geq M_t$ almost surely.
To pass this result to any $s>0$, write $t+s = t+s_1 +s_2 + \ldots +s_k$ where each $s_i \leq s_0$, and recursively apply the previous case.
\end{proof}

Motivated by Lemmas~\ref{l:submart} and \ref{l:bdincr}, 
we provide next a deviation estimate for submartingales whose increments have uniform exponential tails.
%
\newconstant{c:hypothesis_increments_1}
\newconstant{c:hypothesis_increments_2}
\newconstant{c:concentration_dyn_Mar_1}
\newconstant{c:concentration_dyn_Mar_2}
%
\begin{proposition}
\label{p:dev_submart}
Let $(M_{t})_{t\geq 0}$ be a c\`adl\`ag submartingale with respect to a filtration $ (\mathcal{F}_{t})_{t \geq 0}$. Assume that $M_{0}=0$ and that there exist constants $\useconstant{c:hypothesis_increments_1},\useconstant{c:hypothesis_increments_2}>0$  such that
\begin{equation}\label{e:cond_dev_submart}
\PP\Big[\sup_{s \in [0,1]}|M_{t+s}-M_{t}|> u \Big]\leq \useconstant{c:hypothesis_increments_1}e^{- \useconstant{c:hypothesis_increments_2} u}
\quad \text{ for all }t\geq 0  \text{ and } u > 0.
\end{equation}
For all $\varepsilon >0$, there exist constants $\useconstant{c:concentration_dyn_Mar_1},\useconstant{c:concentration_dyn_Mar_2}>0$ such that
\begin{equation}\label{e:dev_submart}
\PP \left[ M_{t} \leq -\varepsilon t\right]\leq \useconstant{c:concentration_dyn_Mar_1} e^{-\useconstant{c:concentration_dyn_Mar_2}t^{1/3}} \quad \text{ for all } t \geq 0.
\end{equation}
\end{proposition}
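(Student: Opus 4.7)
The plan is to combine a truncation argument with Azuma--Hoeffding's inequality via the Doob decomposition; the $t^{1/3}$ rate emerges from balancing the truncation error against the martingale concentration bound. As a preliminary reduction, I would first pass to integer times: for $t \geq 1$ and $n = \lfloor t \rfloor$, applying \eqref{e:cond_dev_submart} with $u = \varepsilon t / 2$ gives $\PP[|M_t - M_n| \geq \varepsilon t/2] \leq \useconstant{c:hypothesis_increments_1} e^{-\useconstant{c:hypothesis_increments_2}\varepsilon t/2}$, so it suffices to prove a bound of the form $\PP[M_n \leq -\varepsilon' n] \leq C e^{-c n^{1/3}}$ for integer $n$ and a suitably adjusted $\varepsilon' > 0$.

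Set $K = \lceil n^{1/3} \rceil$, let $X_k = M_k - M_{k-1}$, and consider the symmetrically truncated increments $\hat{X}_k := (X_k \wedge K) \vee (-K)$ together with the partial sums $\hat{M}_k := \sum_{j \leq k}\hat{X}_j$. On the good event $G := \{|X_k| \leq K \text{ for all } k \leq n\}$ one has $\hat{M}_n = M_n$, and a union bound using \eqref{e:cond_dev_submart} yields $\PP[G^c] \leq n\, \useconstant{c:hypothesis_increments_1} e^{-\useconstant{c:hypothesis_increments_2} K}$. I would then introduce the Doob decomposition $\hat{M}_k = \hat{N}_k + \hat{A}_k$, with $\hat{N}$ a martingale whose increments satisfy $|\hat{N}_k - \hat{N}_{k-1}| \leq 2K$ and $\hat{A}_k = \sum_{j \leq k}\EE[\hat{X}_j|\mathcal{F}_{j-1}]$ predictable. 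From $\hat{X}_k \geq X_k - (X_k - K)_+$ together with the submartingale property $\EE[X_k|\mathcal{F}_{k-1}] \geq 0$ one obtains $\EE[\hat{X}_k|\mathcal{F}_{k-1}] \geq -\EE[(|X_k|-K)_+|\mathcal{F}_{k-1}]$, and hence $\hat{A}_n \geq -Z_n$ where $Z_n := \sum_{k=1}^n \EE[(|X_k|-K)_+|\mathcal{F}_{k-1}]$.

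Integrating the unconditional tail bound gives $\EE[Z_n] \leq n(\useconstant{c:hypothesis_increments_1}/\useconstant{c:hypothesis_increments_2}) e^{-\useconstant{c:hypothesis_increments_2} K}$, so Markov's inequality yields $\PP[Z_n > \delta n] \leq \delta^{-1}(\useconstant{c:hypothesis_increments_1}/\useconstant{c:hypothesis_increments_2}) e^{-\useconstant{c:hypothesis_increments_2} K}$ for any $\delta > 0$. On the event $G \cap \{Z_n \leq (\varepsilon'/2)n\}$, the inclusion $\{M_n \leq -\varepsilon' n\} \subseteq \{\hat{N}_n \leq -\varepsilon' n/2\}$ holds, and Azuma--Hoeffding applied to $\hat{N}$ controls the latter by $\exp(-\varepsilon'^2 n/(32 K^2))$. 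With $K = \lceil n^{1/3} \rceil$ the three error terms $\PP[G^c]$, $\PP[Z_n > \delta n]$ and the Azuma bound are all of order $\exp(-c n^{1/3})$ up to polynomial prefactors (which get absorbed into the constant), and combining everything delivers the claim.

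The main obstacle is controlling the predictable drift $\hat{A}_n$ without any conditional tail information on $X_k$. The submartingale hypothesis provides the crucial nonnegativity $\EE[X_k|\mathcal{F}_{k-1}] \geq 0$ for free, so only the one-sided truncation error $\EE[(X_k - K)_+|\mathcal{F}_{k-1}]$ needs to be dominated; that random variable has exponentially small unconditional expectation, which via Markov's inequality translates into an exponentially small (in $K$) deviation bound for $Z_n$---this is crucially sufficient because the target probability is itself only of order $\exp(-c n^{1/3})$. The exponent $1/3$ is precisely what comes out of optimizing $K$ in the trade-off between $\PP[G^c] \sim e^{-cK}$ and the Azuma rate $\exp(-c\varepsilon^2 n/K^2)$.
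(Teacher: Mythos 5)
Your proof is correct and rests on the same core mechanism as the paper's---truncation at level $K \sim n^{1/3}$, Azuma on the bounded martingale part, and $L^1$/Markov control of the tail contribution, with the exponent $1/3$ emerging from balancing $e^{-cK}$ against $e^{-c n/K^2}$---but the decomposition you use is genuinely different in its details. The paper splits each increment $W_i$ into two \emph{centered} pieces $Y_i = W_i\1_{\{|W_i|\le n^\theta\}} - \EE[\,\cdot\,|\mathcal F_{i-1}]$ and $Z_i = W_i\1_{\{|W_i|> n^\theta\}} - \EE[\,\cdot\,|\mathcal F_{i-1}]$ and exploits the submartingale property through the deterministic inequality $\sum Y_i + \sum Z_i \le M_n$; this avoids any good event entirely, since the large-increment contribution is absorbed into the $L^1$-controlled term $\sum Z_i$ (with Azuma applied to $\sum Y_i$ and Markov to $\sum Z_i$). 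You instead clip the increments, pass to a good event $G$ on which the clipped process agrees with $M$, and then apply the Doob decomposition to the clipped process, using the submartingale hypothesis to control the predictable drift $\hat A_n$ from below by $-Z_n$. Your route incurs one extra union-bound error $\PP[G^c]$, which is harmless at the $e^{-cn^{1/3}}$ scale, and is arguably the more standard textbook pattern; the paper's version is slightly leaner because the two martingale sums bound $M_n$ without conditioning on a good event. The observation you flag---that conditional tail information on $X_k$ is unavailable, but Markov on the unconditional expectation of $Z_n$ suffices because the target rate is only $e^{-cn^{1/3}}$---is exactly the same point that makes the paper's $L^1$ estimate of $\sum Z_i$ adequate. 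Both proofs are sound; yours would serve as a valid substitute.
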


\begin{proof}
Let us start by studying $M_t$ for integer times. For each $i \in \mathbb{N}$, we denote $W_i=M_i-M_{i-1}$. 
Fix $\theta \in (0,\tfrac12)$. For $n \in \mathbb{N}$, define the random variables 
\begin{equation*}
\begin{split}
Y_i & = W_i\mathbbm{1}_{\{|W_i|\leq n^{\theta}\}}- \EE\left[W_i \mathbbm{1}_{\{|W_i|\leq n^{\theta}\}}| \mathcal{F}_{i-1} \right],\\
Z_i & = W_i\mathbbm{1}_{\{|W_i| > n^{\theta}\}}- \EE\left[W_i \mathbbm{1}_{\{|W_i|> n^{\theta}\}}| \mathcal{F}_{i-1} \right].
\end{split}
\end{equation*}
Then $Y_i$ and $Z_i$ are martingale differences with respect to the filtration $ (\mathcal{F}_{i})_{i \geq 0}$, and
\begin{equation}
 \sum_{i=1}^{n} Y_i + \sum_{i=1}^{n} Z_i = M_n - \sum_{i=1}^{n} \EE \left[M_i-M_{i-1} | \mathcal{F}_{i-1} \right] \leq M_{n},
\end{equation}
since $M_{n}$ is a submartingale with $M_{0}=0$. In particular, for $\varepsilon > 0$,
\begin{equation}\label{split_martingale}
\PP \Big[ M_{n} \leq -\frac{\varepsilon}{2} n \Big] \leq \PP \Big[ \sum_{i=1}^{n} Y_{i} \leq -\frac{\varepsilon}{4} n \Big] + \PP\Big[ \sum_{i=1}^{n}Z_{i} \leq -\frac{\varepsilon}{4} n \Big].
\end{equation}
To bound the first probability on the right-hand side above, notice that $ \left\lvert Y_i \right\rvert \leq 2n^{\theta}$, 
and therefore we can apply Azuma's inequality (see \cite[page 146]{lesigne}) to obtain
\begin{equation}\label{1split}
 \PP\Big[ \sum_{i=1}^{n}Y_{i} \leq -\frac{\varepsilon}{4} n\Big] \leq e^{-c\frac{n^2}{n \cdot n^{2\theta}}} = e^{-c n^{1-2\theta}}
\end{equation}
for some constant $c>0$.
To bound the second probability, estimate
\begin{equation}
\begin{split}
\EE\left[|Z_{i}|\right]
& \leq 2\EE \left[|W_{i}|\1_{\{|W_i|>n^{\theta}\}}\right]  \\
& =  2 \left\{ n^\theta \PP \left(|W_i|>n^{\theta} \right) + \int_{n^{\theta}}^\infty \PP \left(|W_i| > u\right) \dd u \right\} \\
& \leq 2 \left\{ n^\theta \useconstant{c:hypothesis_increments_1} e^{- \useconstant{c:hypothesis_increments_2} n^\theta}  +  \int_{n^{\theta}}^\infty \useconstant{c:hypothesis_increments_1} e^{- \useconstant{c:hypothesis_increments_2} u } \dd u \right\}
\leq C e^{-\useconstant{c:hypothesis_increments_2} n^\theta}
\end{split}
\end{equation}
for a positive constant $C$. 
Markov's inequality implies
\begin{equation}\label{2split}
\PP\Big[\Big\lvert \sum_{i=1}^{n}Z_{i}\Big\rvert \geq \frac{\varepsilon}{4}n\Big] 
\leq \frac{4}{\varepsilon n} \mathbb{E}\Big[\Big|\sum_{i=1}^{n}Z_{i}\Big| \Big] 
\leq \frac{4}{\varepsilon} C e^{- \useconstant{c:hypothesis_increments_2}n^{\theta}}.
\end{equation}
Choosing $\theta=1/3$ and combining \eqref{split_martingale}, \eqref{1split}, \eqref{2split}, we obtain 
\begin{equation}\label{Prop_for_int_times}
\PP\left[ M_{n} \leq -\frac{\varepsilon}{2} n\right] \leq  e^{-cn^{1/3}} + \frac{4C}{\varepsilon} e^{-\useconstant{c:hypothesis_increments_2}n^{1/3}} \leq \hat{C} e^{-\hat{c} n^{1/3}}
\end{equation}
for some constants $\hat{C}, \hat{c} >0$.
Now, using \eqref{e:cond_dev_submart} and \eqref{Prop_for_int_times}, we conclude that
\begin{equation*}
\begin{split}
\PP\left[ M_{t} \leq -\varepsilon t\right] & \leq \PP\left[ \lvert M_{t} - M_{\lfloor t \rfloor}\rvert \geq \frac{\varepsilon}{2} \lfloor t \rfloor \right] + \PP\left[ M_{\lfloor t  \rfloor} \leq -\frac{\varepsilon}{2} \lfloor t \rfloor \right]\\
&\leq \useconstant{c:hypothesis_increments_1}e^{- \useconstant{c:hypothesis_increments_2} \frac{\varepsilon}{2}\lfloor t \rfloor} 
+ \hat{C}e^{-\hat{c}\lfloor t \rfloor^{1/3}} \leq \useconstant{c:concentration_dyn_Mar_1} e^{-\useconstant{c:concentration_dyn_Mar_2}t^{1/3}},
\end{split}
\end{equation*}
for suitable positive constants $\useconstant{c:concentration_dyn_Mar_1}, \useconstant{c:concentration_dyn_Mar_2}$ as desired. 
\end{proof}

Using Lemma~\ref{l:submart} and Proposition~\ref{p:dev_submart}, we can finish the proof of Theorem~\ref{t:largedrift}.

\begin{proof}[Proof of Theorem~\ref{t:largedrift}]
Let $\varepsilon = \tfrac13 \{\inf_{\xi} [\alpha(\xi) - \beta(\xi)] - v_\circ \} > 0$ and set $v_\star = v_\circ + \varepsilon$, $u = v_\star +\varepsilon$.
By Lemma~\ref{l:submart}, $M_t = X_t - t u$ is a c\`adl\`ag submartingale, and by Lemma~\ref{l:bdincr}, its increments satisfy \eqref{e:cond_dev_submart}.
Thus $\PP(X_t \leq v_\star t) = \PP(M_t \leq -\varepsilon t) \leq \useconstant{c:concentration_dyn_Mar_1} e^{-\useconstant{c:concentration_dyn_Mar_2}t^{1/3}}$
by Proposition~\ref{p:dev_submart}, implying \eqref{e:BAL} for any $\kappa_\star, \gamma_\star>1$ and an appropriate $C_\star>0$.
\end{proof}


\bibliographystyle{plain}
\bibliography{TeseWeb}

\end{document}